\allowdisplaybreaks \allowdisplaybreaks[4]
\newtheorem{ap}{Assumption}[section]
\newtheorem{theorem}{Theorem}[section]
\newtheorem{lemma}{Lemma}[section]
\newtheorem{definition}{Definition}[section]
\newtheorem{remark}{Remark}[section]
\newtheorem{corollary}{Corollary}[section]
\newtheorem{proposition}{Proposition}[section]
\newtheorem{problem}{Problem}
\newtheorem{example}{Example}[section]
\begin{document}

\begin{frontmatter}

\title{Stochastic modified equations for symplectic methods applied to rough Hamiltonian systems based on the Wong--Zakai approximation}

\author[mymainaddress,mysecondaryaddress]{Chuchu Chen}
\ead{chenchuchu@lsec.cc.ac.cn}

\author[mymainaddress,mysecondaryaddress]{Jialin Hong}
\ead{hjl@lsec.cc.ac.cn}

\author[mymainaddress,mysecondaryaddress]{Chuying Huang\corref{mycorrespondingauthor}}
\cortext[mycorrespondingauthor]{Corresponding author}
\ead{huangchuying@lsec.cc.ac.cn}

\address[mymainaddress]{LSEC, ICMSEC, Academy of Mathematics and Systems Science, Chinese Academy of Sciences, Beijing 100190, China}
\address[mysecondaryaddress]{School of Mathematical Sciences, University of Chinese Academy of Sciences, Beijing 100049, China}

\begin{abstract}
We investigate the stochastic modified equation which plays an important role in the stochastic backward error analysis for explaining the mathematical mechanism of a numerical method. The contribution of this paper is threefold. First, we construct a new type of stochastic modified equation, which is a perturbation of the Wong--Zakai approximation of the rough differential equation. For a symplectic method applied to a rough Hamiltonian system, the associated stochastic modified equation is proved to have a Hamiltonian formulation. Second, the pathwise convergence order of the truncated modified equation to the numerical method is obtained by techniques in the rough path theory. Third, if increments of noises are simulated by truncated random variables, we show that the one-step error can be made exponentially small with respect to the time step size. Numerical experiments verify our theoretical results.
\end{abstract}

\begin{keyword}{stochastic modified equation} \sep {rough Hamiltonian system} \sep {Wong--Zakai approximation}\sep {symplectic method}   \sep {rough path}
\MSC[2010] 60H35 (60G15\sep 65C30\sep 65P10)
\end{keyword}

\end{frontmatter}

\linenumbers

\section{Introduction}
In the study of a numerical method for a deterministic ordinary differential equation, the modified equation whose solution coincides with the numerical solution is crucial in the backward error analysis. It gives a lot of insights into the numerical method, especially for illustrating the long time superiority of symplectic methods for Hamiltonian systems. More precisely, the modified equation associated to a symplectic method is proved to be a perturbed Hamiltonian system, which reveals the mechanism of the symplectic method over long time simulation. The readers are referred to the monograph \cite{GeometricEH} for a detailed review.

For the stochastic differential equation (SDE) driven by the standard Brownian motion
\begin{align*}
dY_t=V(Y_t)dW_t,
\end{align*}
there exist various types of stochastic modified equations in different senses of convergence.
In view of the weak convergence, adding the modified coefficient with powers of the time step size $h$ to the original SDE yields a modified equation of the form
\begin{align}\label{mod1}
d\tilde{Y}_t=\big[V(\tilde{Y}_t)+\tilde{V}(\tilde{Y}_t)h^p\big]dW_t,
\end{align}
which fits the numerical method to a higher weak order.
The modified coefficient $\tilde{V}$ can be determined by the weak Taylor expansion \cite{S06} or by the expansion of the backward Kolmogorov equation \cite{Z11}.
As an application, the first order integrated Euler method is proposed for the stochastic Langevin equation in \cite{Z11} to preserve the mean of a modified Hamiltonian.
Another application of this kind of modified equations is to construct high weak order methods; see \cite{ACVZ12,HSW17}.
The modification is also considered
at the level of the generator associated with the process solution of the SDE instead of at the level of the SDE, which leads to the modified Kolmogorov equation
\begin{align*}
\frac{\partial \tilde{u}}{\partial t}=\big[\mathcal{L}+\mathcal{L}_1h+\cdots+\mathcal{L}_Nh^N\big]\tilde{u}.
\end{align*}
Based on the modified Kolmogorov equation, \cite{DF12} proves that
the numerical solution obtained by the Euler method for SDEs on the torus is exponentially mixing up to negligible terms. 
The results are extended to implicit methods for SDEs on $\mathbb{R}^m$ in \cite{2019BIT,K15BIT,K15IMA}. 
With respect to strong convergence, using multiple Stratonovich integrals $J_{\alpha,t}$, 
\cite{D16} defines the modified equation 
\begin{align*}
d\tilde{Y}_t=\big[V(\tilde{Y}_t)+\sum_{\alpha}\tilde{V}_\alpha(\tilde{Y}_t)J_{\alpha,t}\big]dW_t
\end{align*}
for the Euler method, and the optimal truncation of the above series is studied.

As fundamental models in many physical and engineering sciences, such as the passive tracer model and the Kubo oscillator, the phase flows of stochastic Hamiltonian systems preserve the symplectic structure pathwisely and there have been a great amount of work about the construction of stochastic symplectic methods after the pioneering articles \cite{Milstein,additive}. Lots of numerical simulations have shown that the stochastic symplectic methods are superior over long time computation to non-symplectic ones. 
From the perspective of the stochastic modified equation to investigate the superiority of the stochastic symplectic methods,
it is natural to ask:
\begin{problem}\label{P1}
	For a stochastic symplectic method applied to a stochastic Hamiltonian system, does there exist a stochastic modified equation which has a stochastic Hamiltonian formulation, such that its exact solution coincides with the numerical solution?
\end{problem}

This problem is partially solved by \cite{WHS16BIT,WXZ}. As far as the weak convergence is concerned, for the cases of that the Hamiltonians in diffusion parts do not depend on the generalized coordinate and momenta simultaneously, the modified equations in form of \eqref{mod1} for stochastic symplectic methods are derived in \cite{WHS16BIT} via the generating function. These modified equations are perturbed stochastic Hamiltonian systems with respect to the original systems. In \cite{WXZ}, the modified coefficient in \eqref{mod1} is deduced for a symplectic splitting method applied to separable Hamiltonian systems with additive noises, and the flow of the corresponding modified equation preserve the symplectic structure.

In this article, we investigate Problem \ref{P1} for stochastic symplectic methods applied to stochastic Hamiltonian systems driven by Gaussian rough paths proposed in \cite{HHW18}:
\begin{align}\label{eqX}
dY_t=V(Y_t)dX_t,
\end{align}
where $X$ is more general than standard Brownian motions.
The numerical study of the rough differential equations has drawn a lot of attentions (see e.g., \cite{Deya,CN2017,Kellyaap,LT2019}).
Given the time step size $h$ and the numerical solution $\{Y^h_n\}^N_{n=1}$, 
denoting by $x^h$ the piecewise linear approximation of $X$, we present a new type of stochastic modified equation 
\begin{align*}
d\tilde{y}_t=\big[V(\tilde{y}_t)+\sum_{\alpha}\tilde{V}_\alpha(\tilde{y}_t)(X^{1}_{t_{n},t_{n+1}})^{\alpha_1}\cdots(X^{d}_{t_{n},t_{n+1}})^{\alpha_d}\big]dx^h_t,\quad t\in[t_n,t_{n+1}],
\end{align*}
which is a perturbation for the Wong--Zakai approximation of equation \eqref{eqX} and satisfies that $\tilde{y}_{t_n}=Y^h_n$. We refer to \cite{MLMC16,Wzk} for the convergence analysis on the Wong--Zakai approximation.
Based on the orthogonal polynomials with respect to the measure induced by the increments of noises, we prove that if a symplectic method is applied to a rough Hamiltonian system, then for any $\alpha$, there exists a Hamiltonian $\mathcal{H}_\alpha$ such that 
\begin{align*}
\tilde{V}_\alpha=\mathbb{J}^{-1}\nabla \mathcal{H}_\alpha.
\end{align*}
This implies that stochastic modified equations for symplectic methods are also stochastic Hamiltonian systems, and gives a positive answer to Problem \ref{P1}. 

Since the coefficient of the stochastic modified equation is an infinite series, 
the truncated modified equation should be taken into consideration as well. Further problems are:

\begin{problem}\label{P2}
	What is the convergence rate of the error between the numerical solution and the exact solution of the truncated modified equation?
\end{problem}

\begin{problem}\label{P3}
Can the error be made exponentially small with respect to the time step size?
\end{problem}

Using the It\^o--Lyons map in the rough path theory, we obtain the pathwise convergence rate of the exact solution $\tilde{y}^{\tilde{N}}$ of truncated modified equation to the numerical solution $Y^{h}$, that is,
\begin{align*}
\sup_{1\le n\le N}\|\tilde{y}^{\tilde{N}}_{t_n}-Y^{h}_{n}\|\le C(\omega)h^{\frac{\tilde{N}+1}{p}-1},\quad a.s.,
\end{align*}
where $\tilde{N}$ is the truncation number and $p$ depends on the regularity of the driving signal. This convergence result answers Problem \ref{P2}. 
For Problem \ref{P3}, we focus on the case for the standard Brownian motion where the increments of noises are simulated by truncated random variables proposed in \cite{Milstein}. Due to the lack of explicit expansion formulas of implicit numerical methods, we use the analytic assumption to estimate the numerical solution, the modified equation and the truncated modified equation, successively. Combining the estimates yields that there exists some truncation number $\tilde{N}=\tilde{N}(h)$ such that the one-step error  is exponentially small with respect to the time step size:
\begin{align*}
\|\tilde{y}^{\tilde{N}}_{t_1}-Y^h_1\|\le Che^{- h_0/h^{\frac12-\epsilon} }.
\end{align*}

The rest of this article is organized as follows. In Section \ref{sec2}, we introduce the basic results in the rough path theory. In Section \ref{sec3}, for Problem \ref{P1}, we illustrate the procedure in constructing stochastic modified equations based on the Wong--Zakai approximation and prove that stochastic modified equations associated to stochastic symplectic methods are Hamiltonian systems as well. In Section \ref{sec4}, we prove the pathwise convergence rate of the error between the numerical solution and the exact solution of truncated modified equation, and give the exponentially convergence for one-step error in the case of truncated noises, which answers Problems \ref{P2}-\ref{P3}. Numerical experiments are presented in Section \ref{sec5} to support theoretical results.

\section{Preliminaries}\label{sec2}

In this section, we review the well-posedness of stochastic differential equations in the sense of rough path theory (\cite{Friz,Lyons}).

Consider the stochastic differential equation driven by multi-dimensional Gaussian signal
\begin{equation}\label{sdeH1}
\left\{
\begin{aligned}
dY_{t}&=V_0(Y_{t})dt+\sum^{d}_{l=1}V_{l}(Y_{t})dX^{l}_{t},\quad t\in[0,T];\\
Y_{0}&=z\in\mathbb{R}^{m}.
\end{aligned}
\right.
\end{equation}
For a convenient notation involving the drift term, we define $V:=(V_0,V_1,\cdots,V_d)$, $X:=(X^0,X^1,\cdots,X^d)$, $X^0_t:=t$ and then an equivalent form of equation \eqref{sdeH1} is 
\begin{equation}\label{sdeH}
\left\{
\begin{aligned}
dY_{t}&=V(Y_{t})dX_{t},\quad t\in[0,T];\\
Y_{0}&=z.
\end{aligned}
\right.
\end{equation}
In this article, we focus on the case that the driving signal $X$ satisfies the following assumption.

\begin{ap}\label{R}
	Let $X^{l}:[0,T]\rightarrow\mathbb{R}$, $l=1,\cdots,d$ be independent centered Gaussian processes with continuous sample paths.
	There exist some $\rho \in [1,2)$ and $K \in (0,+ \infty)$ such that the covariance of $X$ satisfies
	\begin{align*}
	\sup_{\{t_{k}\},\{t_{i}\} \in \mathcal{D}([s,t])}\left(\sum_{t_{k},t_{i}}\Big|\mathbb{E} \big[X^l_{t_{k},t_{k+1}} X^l_{t_{i},t_{i+1}}\big]\Big|^{\rho}\right)^{1/ \rho} \leq K |t-s|^{1/ \rho},\quad \forall~0\leq s<t\leq T,
	\end{align*}
	where $\mathcal{D}([s,t])$ denotes the set of all dissections of $[s,t]$ and $X^l_{t_{k},t_{k+1}}:=X^l_{t_{k+1}}-X^l_{t_{k}}$.
\end{ap}

For instance, one can check that fractional Brownian motions with Hurst parameter $H \in (\frac14,\frac12]$, whose covariance is
$\mathbb{E} \big[|X^l_{s,t} |^2\big]= |t-s|^{2H}$,
 satisfy Assumption \ref{R} with $\rho=\frac{1}{2H}$. Since the Kolmogorov continuity theorem shows that the trajectory of the fractional Brownian motion has $(H-\epsilon)$-H\"older regularity with $\epsilon$ being an arbitrarily small positive number, the well-posedness of equation \eqref{sdeH} fails to be established in the Riemann--Stieltjes integral sense. Hence   we interprete \eqref{sdeH} in the rough path sense. To this end, we introduce some basic concepts in the rough path theory (see \cite{Friz} for more details).

Let $p \in [1,\infty)$ and $[p]$ be the integer part of $p$, i.e., $[p]\in\mathbb{N}_+$ with $p-1< [p]\leq p$.
We denote by $\left(G^{[p]}(\mathbb{R}^{d+1}),{\rm d}\right)$ the free step-$[p]$ nilpotent Lie group of $\mathbb{R} ^{d+1}$ equipped with the Carnot--Carath\'eodory metric (\cite[Chap.  7]{Friz}). 
A continuous map $\textbf{X}:[0,T]\rightarrow G^{[p]}(\mathbb{R}^{d+1})\subset \bigoplus _{n=0}^{[p]}(\mathbb{R}^{d+1})^{\otimes n}$ is called $p$-rough path if 
\begin{equation*}
\|\textbf{X}\|_{p\text{-}var;[0,T]}:=\sup_{\{t_{k}\} \in \mathcal{D}([0,T])}\left(\sum_{t_{k}} {\rm d}( \textbf{X}_{t_{k}},\textbf{X}_{t_{k+1}})^{p}\right)^{1/p}<\infty,
\end{equation*}
where $\mathcal{D}([0,T])$ is the set of dissections of $[0,T]$.
Furthermore, we say that $\textbf{X}$ is of H$\rm \ddot{o}$lder-type if
\begin{equation*}
\|\textbf{X}\|_{\frac1p\text{-} {\rm H\ddot{o}l};[0,T]}:=\sup_{0\leq s<t\leq T}\dfrac{{\rm d}(\textbf{X}_{s},\textbf{X}_{t})}{|t-s|^{1/p}} < \infty.
\end{equation*}

For example, if $x:[0,T]\rightarrow \mathbb{R}^{d+1}$ is a function of bounded variation and $x_0=0$, the corresponding rough path can be defined by $S_{[p]}(x):[0,T]\rightarrow G^{[p]}(\mathbb{R}^{d+1})$ with
\begin{equation*}
S_{[p]}(x)_{t}=\left(1,\int_{0\leq u_{1}\leq t}dx_{u_{1}},\cdots,\int_{0\leq u_{1}<\cdots<u_{[p]}\leq t}dx_{u_{1}}\otimes\cdots \otimes dx_{u_{[p]}}\right).
\end{equation*}
It is a canonical lift for $x$ in the sense that the projection of $S_{[p]}(x)$ onto $\mathbb{R}^{d+1}$ coincides with $x$. 

Moreover, 
the Gaussian process $X$ under Assumption \ref{R} can be lifted to a H$\rm \ddot{o}$lder-type $p$-rough path $\textbf{X}\in G^{[p]}(\mathbb{R}^{d+1})$ for any $p > 2\rho$ (\cite[Theorem 15.33]{Friz}), which is defined by the limit of $\{S_3(x^n)\}_{n=1}^{\infty}$ with $\{x^n\}_{n=1}^{\infty}$ being a sequence of piecewise linear or mollifier approximations to $X$. As a consequence, the well-posedness of equation \eqref{sdeH} is given by that of the rough differential equation (RDE)
\begin{equation}\label{rde}
\left\{
\begin{aligned}
dY_{t}&=V(Y_{t})d\textbf{X}_{t},\quad t\in[0,T];\\
Y_{0}&=z.
\end{aligned}
\right.
\end{equation}
In the sequel, we introduce the definition of the solution of equation \eqref{rde} and state the condition for the existence and uniqueness of the solution. Throughout the rest of this paper, we denote by $\|\cdot\|$ the Euclidean norm and by $C$ a generic constant which may be different from line to line.

\begin{definition}(\cite[Definition 10.17]{Friz})\label{solution}
	Let $p\in [1,\infty)$ and $\textbf{X}$ be a $p$-rough path. Suppose that there exists a sequence of functions $\{x^{n}\}_{n=1}^{\infty}$ of bounded variation taking values in $\mathbb{R}^{d+1}$ such that
	\begin{equation*}\label{xn}
	\sup_{n\in \mathbb{N}}\|S_{[p]}(x^{n})\|_{p\text{-}var;[0,T]} < \infty\quad {\rm and}\quad \lim_{n\rightarrow \infty} \sup_{0\leq s<t\leq T} {\rm d}\big(S_{[p]}(x^{n})_{s,t},\textbf{X}_{s,t}\big)=0,
	\end{equation*}
	where $S_{[p]}(x^{n})_{s,t}:=S_{[p]}(x^{n})_{s}^{-1}\otimes S_{[p]}(x^{n})_{t}$ and $\textbf{X}_{s,t}:=\textbf{X}_{s}^{-1}\otimes\textbf{X}_{t}$.
	Suppose in addition that $\{y^{n}\}_{n=1}^{\infty}$ are solutions of equations $dy^{n}_{t}=V(y^{n}_{t})dx^{n}_{t}$, in the Riemann--Stieltjes integral sense, with the same initial value $z$ as in \eqref{rde}. If $y^{n}_{t}$ converges to $Y_{t}$ in the $L^{\infty}([0,T])$-norm, i.e.,
	\begin{align*}
	\lim_{n\rightarrow \infty}\sup_{0\le t\le T}\|y^n_t-Y_t\|=0,
	\end{align*}
	 then we call $Y_{t}$ a solution of \eqref{rde}.
\end{definition}

\begin{definition}(\cite[Definition 10.2]{Friz})\label{Lip}
	Let $\gamma>0$, and $\lfloor \gamma \rfloor$ be the largest integer strictly smaller than $\gamma$, i.e., $\gamma-1\leq\lfloor \gamma \rfloor<\gamma$. We say that $V \in Lip^{\gamma}$ , if $V$ is $\lfloor \gamma \rfloor$ times continuously differentiable and there exists some constant $C$ such that 
	\begin{equation*}
	\begin{split}
	&\|D^{k}V(y)\|\leq C,\quad \forall\ k=0,\cdots, \lfloor \gamma \rfloor,~\forall\ y \in \mathbb{R}^{m},\\
	&\|D^{\lfloor \gamma \rfloor }V(y_{1})-D^{\lfloor \gamma \rfloor }V(y_{2})\|\leq C \|y_{1}-y_{2}\|^{\gamma - \lfloor \gamma \rfloor},\quad \forall\ y_{1},y_{2}\in \mathbb{R}^{m},
	\end{split}
	\end{equation*}
	where $D^{k}V$ denotes $k$th derivative of $V$. 
	The smallest constant $C$ satisfying the above inequalities is denoted by $\|V\|_{Lip^{\gamma}}$.
\end{definition}

\begin{lemma}(\cite[Theorem 10.26 and Theorem 11.6]{Friz})\label{well}
	Let $p\in [1,\infty)$ and $\textbf{X}$ be a $p$-rough path. If $V\in Lip^{\gamma}$ with $\gamma>p$, or $V$ is linear, then \eqref{rde} has a unique solution. Additionaly, the Jacobian $\frac{\partial Y_{t}}{\partial z}$ exists and satisfies the linear RDE
	\begin{equation*}
	\left\{
	\begin{aligned}
	d\frac{\partial Y_{t}}{\partial z}&=\sum^{d}_{l=0}DV_{l}(Y_{t})\frac{\partial Y_{t}}{\partial z}d\textbf{X}^{l}_{t},\quad t\in[0,T];\\
	\frac{\partial Y_{0}}{\partial z}&=\mathbb{I}_{m}\in \mathbb{R}^{m\times m},
	\end{aligned}
	\right.
	\end{equation*}
	where $\mathbb{I}_{m}$ is the identity matrix.
\end{lemma}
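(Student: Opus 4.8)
The plan is to establish existence through the continuity of the It\^o--Lyons solution map, i.e.\ via the universal limit theorem. By hypothesis $\textbf{X}$ is a $p$-rough path, so Definition \ref{solution} furnishes a sequence $\{x^n\}$ of bounded-variation paths whose lifts $S_{[p]}(x^n)$ are uniformly bounded in $p$-variation and converge to $\textbf{X}$ in the rough-path metric. For each fixed $n$ the classical equation $dy^n_t=V(y^n_t)dx^n_t$ admits a unique Riemann--Stieltjes solution, since $V\in Lip^{\gamma}$ is in particular locally Lipschitz. First I would derive a priori bounds on the enhanced solution paths $\textbf{Y}^n$ that are uniform in $n$, then show $\{\textbf{Y}^n\}$ is Cauchy in the $p$-variation metric, and finally identify the limit $Y$ as a solution in the sense of Definition \ref{solution}.

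The technical heart is a local estimate patched to a global one. On a short interval the solution admits a Davie-type expansion in which $Y_{s,t}$ is approximated by the truncated series $\sum V_{(i_1)}\cdots V_{(i_k)}(Y_s)\,\textbf{X}^{(i_1\cdots i_k)}_{s,t}$ up to level $[p]$, with a remainder controlled by the factorial decay (neo-classical inequality) of the iterated integrals encoded in $\textbf{X}$. The smoothness budget $\gamma>p$ is exactly what is needed to Taylor-expand the composition $V\circ Y$ to the requisite order and to certify that the remainder is genuinely of higher order in the mesh. To upgrade these local bounds to a uniform global estimate I would introduce a greedy sequence of stopping times at which the accumulated $p$-variation of $\textbf{X}$ hits a fixed threshold; the number of such times is controlled by $\|\textbf{X}\|^p_{p\text{-}\mathrm{var};[0,T]}$, yielding the uniform a priori bound. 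I expect this accounting, showing the remainder closes and that the local estimates glue into a uniform bound, to be the main obstacle, and it is precisely where $\gamma>p$ enters.

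Uniqueness and continuous dependence follow by estimating the $p$-variation of the difference of two solutions in terms of the rough-path distance between their drivers, through a Gronwall argument in the rough-path metric; this simultaneously yields uniqueness and the convergence $y^n\to Y$ required in Definition \ref{solution}. The linear case must be handled separately, because a linear vector field is unbounded and hence not in $Lip^{\gamma}$: here one exploits that all derivatives of order at least one are bounded, so the estimates close with at most exponential growth in the solution norm rather than requiring global boundedness of $V$ itself.

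For the Jacobian I would augment the system, viewing $(Y_t,J_t)$ with $J_t:=\partial Y_t/\partial z$ as the solution of an enlarged RDE driven by the same $\textbf{X}$, in which the $J$-component is governed by the vector fields $J\mapsto DV_l(Y_t)J$ that are linear in $J$. Differentiating the approximating ODE solutions with respect to $z$ and passing to the limit, justified by applying the continuity of the solution map to the augmented system and noting the linearity of the $J$-equation, shows that $\partial Y_t/\partial z$ exists and satisfies the stated linear RDE with initial datum $\mathbb{I}_m$.
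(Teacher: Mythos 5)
The paper offers no proof of this lemma at all: it is quoted directly from \cite[Theorems 10.26 and 11.6]{Friz}, and your sketch reconstructs exactly the strategy of those cited proofs --- Davie-type local expansions glued together by a greedy partition of $[0,T]$ according to accumulated $p$-variation, uniqueness and convergence of the bounded-variation approximations via Lipschitz continuity of the It\^o--Lyons map, a separate argument for linear (hence unbounded, non-$Lip^{\gamma}$) vector fields, and the augmented system $(Y_t,\partial Y_t/\partial z)$ for the Jacobian. Your proposal is therefore correct in outline and takes essentially the same route as the paper's source.
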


\begin{remark}
	If $X$ is the standard Brownian motion, the solution $Y$ of \eqref{rde} solves the corresponding Stratonovich SDE almost surely (\cite[Theorem 17.3]{Friz}).
\end{remark}

\section{Construction of the stochastic modified equation}\label{sec3}
In this section, we propose an approach to deduce the stochastic modified equations for a numerical method under assumptions. The stochastic modified equation is proved to possess the symplectic conservation law if it is associated with a stochatic symplectic method for a rough Hamiltonian system. This answers Problem \ref{P1} proposed in the introduction.

\subsection{Construction of the stochastic modified equations for general methods}\label{sec3.1}
Fix the time step size $h=T/N$, $N\in\mathbb{N}_+$. 
Let $Y^h_n$ be the numerical solution given by certain numerical method, which is an approximation for $Y_{t_n}$, where $t_n=nh$, $n=0,\cdots,N$.
Our main assumption on the numerical method is that $Y^h_{n+1}$ can be expanded as an infinite series of functions of $Y^h_n$:
\begin{align}\label{expYn}
Y^h_{n+1}=Y^h_n+\sum_{|\alpha|=1}^{\infty}d_\alpha(Y^h_n)h^{\alpha_0}(X^{1}_{t_{n},t_{n+1}})^{\alpha_1}\cdots(X^{d}_{t_{n},t_{n+1}})^{\alpha_d},
\end{align}
where $\alpha=(\alpha_0,\cdots,\alpha_d)\in\mathbb{N}^{d+1}$, $|\alpha|:=\alpha_0+\cdots+\alpha_d\ge 1$ and $Y^h_0=z$. 
In addition, for $|\alpha|=1$ with $\alpha_l=1$, we define $V_\alpha(y):=V_l(y)$. A necessary the condition for that $Y^h_n$ converging to $Y_{t_n}$ pathwisely is that there exists some $\alpha_l\in\{1,\cdots,d\}$ with $\alpha_l=1$ and $|\alpha|=1$ such that
\begin{align}\label{ap-f1}
d_\alpha(y)=V_\alpha(y).
\end{align}

A typical example is the $s$-stage Runge--Kutta (RK) method 
\begin{equation}\label{RK}
\left\{
\begin{aligned}
Y^h_{n+1,i}&=Y^h_{n}+\sum^{s}_{j=1}a_{ij}\Big(V_0(Y^h_{n+1,j})h+\sum_{l=1}^{d}V_l(Y^h_{n+1,j})X^l_{t_{n},t_{n+1}}\Big),\\
Y^h_{n+1}&=Y^h_{n}+\sum^{s}_{i=1}b_{i}\Big(V_0(Y^h_{n+1,i})h+\sum_{l=1}^{d}V_l(Y^h_{n+1,i})X^l_{t_{n},t_{n+1}}\Big).
\end{aligned}
\right.
\end{equation}
Then the Taylor expansion produces that for $l=0,\cdots,d$,
\begin{align*}
V_l(Y^h_{n+1,i})
=&V_l(Y^h_{n})+V'_l(Y^h_{n})\Big(\sum^{s}_{j=1}a_{ij}\Big(V_0(Y^h_{n+1,j})h+\sum_{l_1=1}^{d}V_{l_1}(Y^h_{n+1,j})X^{l_1}_{t_{n},t_{n+1}}\Big)\Big)\\
&+\frac12 V''_l(Y^h_{n})\Big(\sum^{s}_{j=1}a_{ij}\Big(V_0(Y^h_{n+1,j})h+\sum_{l_1=1}^{d}V_{l_1}(Y^h_{n+1,j})X^{l_1}_{t_{n},t_{n+1}}\Big)\Big)^2+\cdots\\
=&V_l(Y^h_{n})+\sum^{s}_{j=1}a_{ij}V'_l(Y^h_{n})\Big(V_0(Y^h_{n})h+\sum_{l_1=1}^{d}V_{l_1}(Y^h_{n})X^{l_1}_{t_{n},t_{n+1}}\Big)\\
&+\sum^{s}_{j_1,j_2=1}a_{ij_1}a_{ij_2}V'_l(Y^h_{n})
\Big(V'_0(Y^h_{n})h+\sum_{l_1=1}^{d}V'_{l_1}(Y^h_{n})X^{l_1}_{t_{n},t_{n+1}}\Big)
 \Big(V_0(Y^h_{n})h+\sum_{l_2=1}^{d}V_{l_2}(Y^h_{n})X^{l_2}_{t_{n},t_{n+1}}\Big)\\
&+\frac12 \sum^{s}_{j_1,j_2=1}a_{ij_1}a_{ij_2}V''_l(Y^h_{n})\Big(V_0(Y^h_{n})h+\sum_{l_1=1}^{d}V_{l_1}(Y^h_{n})X^{l_1}_{t_{n},t_{n+1}}\Big)\Big(V_0(Y^h_{n})h+\sum_{l_2=1}^{d}V_{l_2}(Y^h_{n})X^{l_2}_{t_{n},t_{n+1}}\Big)+\cdots.
\end{align*}
Here $V'_0(y)V_0(y)$ denotes that the derivative of $V_0(y)$ acts on $V_0(y)$, and $V''_0(y)V_0(y)V_0(y)$ is the second derivative of $V''_0(y)$ acting $(V_0(y),V_0(y))$. Other operators are defined similarly.
Substituting them into \eqref{RK}, we get 
\begin{align*}
Y^h_{n+1}
=&Y^h_{n}+\sum^{s}_{i=1}b_{i}\bigg[
V_0(Y^h_{n})+\sum^{s}_{j=1}a_{ij}V'_0(Y^h_{n})\Big(V_0(Y^h_{n})h+\sum_{l_1=1}^{d}V_{l_1}(Y^h_{n})X^{l_1}_{t_{n},t_{n+1}}\Big)\\
&+\sum^{s}_{j_1,j_2=1}a_{ij_1}a_{ij_2}V'_0(Y^h_{n})
\Big(V'_0(Y^h_{n})h+\sum_{l_1=1}^{d}V'_{l_1}(Y^h_{n})X^{l_1}_{t_{n},t_{n+1}}\Big)
\Big(V_0(Y^h_{n})h+\sum_{l_2=1}^{d}V_{l_2}(Y^h_{n})X^{l_2}_{t_{n},t_{n+1}}\Big)\\
&+\frac12 \sum^{s}_{j_1,j_2=1}a_{ij_1}a_{ij_2}V''_0(Y^h_{n})\Big(V_0(Y^h_{n})h+\sum_{l_1=1}^{d}V_{l_1}(Y^h_{n})X^{l_1}_{t_{n},t_{n+1}}\Big)\Big(V_0(Y^h_{n})h+\sum_{l_2=1}^{d}V_{l_2}(Y^h_{n})X^{l_2}_{t_{n},t_{n+1}}\Big)\bigg]h\\
&+\sum^{s}_{i=1}\sum_{l=1}^{d}b_{i}
\bigg[V_l(Y^h_{n})+\sum^{s}_{j=1}a_{ij}V'_l(Y^h_{n})\Big(V_0(Y^h_{n})h+\sum_{l_1=1}^{d}V_{l_1}(Y^h_{n})X^{l_1}_{t_{n},t_{n+1}}\Big)\\
&+\sum^{s}_{j_1,j_2=1}a_{ij_1}a_{ij_2}V'_l(Y^h_{n})
\Big(V'_0(Y^h_{n})h+\sum_{l_1=1}^{d}V'_{l_1}(Y^h_{n})X^{l_1}_{t_{n},t_{n+1}}\Big)
\Big(V_0(Y^h_{n})h+\sum_{l_2=1}^{d}V_{l_2}(Y^h_{n})X^{l_2}_{t_{n},t_{n+1}}\Big)\\
&+\frac12 \sum^{s}_{j_1,j_2=1}a_{ij_1}a_{ij_2}V''_l(Y^h_{n})\Big(V_0(Y^h_{n})h+\sum_{l_1=1}^{d}V_{l_1}(Y^h_{n})X^{l_1}_{t_{n},t_{n+1}}\Big)\Big(V_0(Y^h_{n})h+\sum_{l_2=1}^{d}V_{l_2}(Y^h_{n})X^{l_2}_{t_{n},t_{n+1}}\Big)\bigg]X^l_{t_{n},t_{n+1}}\\
&+\cdots.
\end{align*}

To search the modified equation such that $Y^h_n$ solves exactly at $t_n$, we start from the Wong--Zakai approximation of equation \eqref{sdeH}, i.e., 
\begin{equation}\label{Wzk}
\left\{
\begin{aligned}
dy^{h}_{t}&=V(y^{h}_{t})dx^{h}_{t},\quad t\in[0,T];\\
y^{h}_{0}&=z,
\end{aligned}
\right.
\end{equation}
where $x^h=(x^{h,0},x^{h,1},\cdots,x^{h,d})$ is the piecewise linear approximation to $X$ with 
\begin{align}\label{wzkX}
x^{h,l}_{t}:= X^{l}_{t_{n}}+\dfrac{t-t_{n}}{h}X^{l}_{t_{n},t_{n+1}},\quad  t \in (t_{n},t_{n+1}],~n=0,\cdots,N-1.
\end{align}
Based on the fact that the Wong--Zakai approximation  \eqref{Wzk} is also a random differential equation
\begin{equation}\label{Wzk1}
\left\{
\begin{aligned}
\dot{y}^{h}_{t}&=\sum_{l=0}^{d}V_l(y^{h}_{t})\frac{X^{l}_{t_{n},t_{n+1}}}{h},\quad t\in[t_n,t_{n+1}];\\
y^{h}_{0}&=z,
\end{aligned}
\right.
\end{equation}
we define the modified equation for a general method satisfying \eqref{expYn} by the form 
\begin{equation}\label{modified}
\left\{
\begin{aligned}
\dot{\tilde{y}}_{t}
&=\sum_{|\alpha|=1}^{\infty}f_\alpha(\tilde{y}_{t})h^{\alpha_0-1}(X^{1}_{t_{n},t_{n+1}})^{\alpha_1}\cdots(X^{d}_{t_{n},t_{n+1}})^{\alpha_d},\quad t\in[t_n,t_{n+1}];\\
\tilde{y}_{0}&=z,
\end{aligned}
\right.
\end{equation}
where $\tilde{y}_{t}$ is continuous on $[0,T]$. 
Since $|\alpha|\ge 1$, we know that $0\le i(\alpha):=\min\{l:\alpha_l\ge 1,l=0,\cdots,d\}\le d$.
The modified equation can be rewritten in terms of a stochastic equation driven by $x^h$ with the modified vector fields $\bar{V}$:
\begin{align}
d\tilde{y}_t&=\sum_{|\alpha|=1}^{\infty}f_\alpha(\tilde{y}_t)h^{\alpha_0-1}(X^1_{t_n,t_{n+1}})^{\alpha_1}\cdots(X^d_{t_n,t_{n+1}})^{\alpha_d}dt\nonumber\\
&=\sum_{|\alpha|=1}^{\infty}f_\alpha(\tilde{y}_t)h^{\alpha_0}(X^1_{t_n,t_{n+1}})^{\alpha_1}\cdots(X^{i(\alpha)}_{t_n,t_{n+1}})^{\alpha_{i(\alpha)}-1}\cdots(X^d_{t_n,t_{n+1}})^{\alpha_d}\left(\frac{X^{i(\alpha)}_{t_n,t_{n+1}}}{h}\right)dt\nonumber\\
&=\sum_{|\alpha|=1}^{\infty}f_\alpha(\tilde{y}_t)h^{\alpha_0}(X^1_{t_n,t_{n+1}})^{\alpha_1}\cdots(X^{i(\alpha)}_{t_n,t_{n+1}})^{\alpha_{i(\alpha)}-1}\cdots(X^d_{t_n,t_{n+1}})^{\alpha_d}dx^{h,i(\alpha)}_{t}\nonumber\\
&=:\sum_{l=0}^{d}\bar{V}_l(\tilde{y}_t)dx^{h,l}_{t}\nonumber\\
&=:\bar{V}(\tilde{y}_t)dx^{h}_{t}.\label{Wzkmod}
\end{align}
Since the driving signal $x^h$ is of bounded variation, it can be lifted to a $p$-rough path and then the above equation can be interpreted in the rough path sense.

It remains to determine the coefficients $f_\alpha$. 
Using the Taylor expansion and the chain rule, we have
\begin{align}
\tilde{y}_{t_{n+1}}
=&\tilde{y}_{t_{n}}+\sum_{k=1}^{\infty}\frac{d^k}{dt^k}(\tilde{y}_t)\Big\rvert _{t=t_n}\frac{h^k}{k!}\nonumber\\
=&\tilde{y}_{t_{n}}+\sum_{|\alpha|=1}^{\infty}f_\alpha(\tilde{y}_{t_n})h^{\alpha_0}(X^{1}_{t_{n},t_{n+1}})^{\alpha_1}\cdots(X^{d}_{t_{n},t_{n+1}})^{\alpha_d}\nonumber\\
&+\frac{1}{2!}\left[\frac{\partial }{\partial y}\bigg(\sum_{|\alpha|=1}^{\infty}f_\alpha(\tilde{y}_{t_n})h^{\alpha_0}(X^{1}_{t_{n},t_{n+1}})^{\alpha_1}\cdots(X^{d}_{t_{n},t_{n+1}})^{\alpha_d}\bigg)\right]\left(\sum_{|\alpha|=1}^{\infty}f_\alpha(\tilde{y}_{t_n})h^{\alpha_0}(X^{1}_{t_{n},t_{n+1}})^{\alpha_1}\cdots(X^{d}_{t_{n},t_{n+1}})^{\alpha_d}\right)\nonumber\\
&+\frac{1}{3!}\left[\frac{\partial }{\partial y}\bigg(\Big(\frac{\partial }{\partial y}\Big(\sum_{|\alpha|=1}^{\infty}f_\alpha(\tilde{y}_{t_n})h^{\alpha_0}(X^{1}_{t_{n},t_{n+1}})^{\alpha_1}\cdots(X^{d}_{t_{n},t_{n+1}})^{\alpha_d}\Big)\Big)\Big(\sum_{|\alpha|=1}^{\infty}f_\alpha(\tilde{y}_{t_n})h^{\alpha_0}(X^{1}_{t_{n},t_{n+1}})^{\alpha_1}\cdots(X^{d}_{t_{n},t_{n+1}})^{\alpha_d}\Big)\bigg)\right]\nonumber\\
&\qquad\left(\sum_{|\alpha|=1}^{\infty}f_\alpha(\tilde{y}_{t_n})h^{\alpha_0}(X^{1}_{t_{n},t_{n+1}})^{\alpha_1}\cdots(X^{d}_{t_{n},t_{n+1}})^{\alpha_d}\right)+\cdots\nonumber\\
	=:&\tilde{y}_{t_{n}}+\sum_{|\alpha|=1}^{\infty}\tilde{f}_\alpha(\tilde{y}_{t_{n}})h^{\alpha_0}(X^{1}_{t_{n},t_{n+1}})^{\alpha_1}\cdots(X^{d}_{t_{n},t_{n+1}})^{\alpha_d}.\label{expyn}
\end{align}
Introducing the notation $(D_{k^{i_1,i_2}}g)(y):=g'(y)f_{k^{i_1,i_2}}(y)$ and 
\begin{align*}
O_i:=\left\{ (k^{i,1},\cdots,k^{i,i}):  k^{i,1},\cdots,k^{i,i}\in\mathbb{N}^{d+1},~
k^{i,1}_l+\cdots+k^{i,i}_l=\alpha_l,~l=0.\cdots,d
\right\},
\end{align*}
we obtain
\begin{align*}
\tilde{f}_\alpha(y)&=f_\alpha(y),\quad |\alpha|=1,\\
\tilde{f}_\alpha(y)&=f_\alpha(y)+\sum_{i=2}^{|\alpha|}\frac{1}{i!}\sum_{(k^{i,1},\cdots,k^{i,i})\in O_i}(D_{k^{i,1}}\cdots D_{k^{i,i-1}}f_{k^{i,i}})(y),\quad |\alpha|\ge 2.
\end{align*}
 To ensure $Y^h_{n+1}=\tilde{y}_{t_{n+1}}$, comparing \eqref{expYn} and \eqref{expyn} we need
\begin{align*}
\tilde{f}_\alpha(y)=d_\alpha(y),\quad\forall~ \alpha\in\mathbb{N}^{d+1},~|\alpha|\ge 1.
\end{align*}
Therefore, the stochastic modified equation is given by the recursion 
\begin{align}\label{confj}
\begin{split}
f_\alpha(y)&=d_{\alpha}(y),\quad |\alpha|=1,\\
f_\alpha(y)&=d_\alpha(y)-\sum_{i=2}^{|\alpha|}\frac{1}{i!}\sum_{(k^{i,1},\cdots,k^{i,i})\in O_i}(D_{k^{i,1}}\cdots D_{k^{i,i-1}}f_{k^{i,i}})(y),\quad |\alpha|\ge 2.
\end{split}
\end{align}
We note that $f_\alpha$ is determined by the coefficients $d_\alpha$ and $f_{\alpha'}$ with $|\alpha'|<|\alpha|$. 

\subsection{Stochastic modified equation of stochastic symplectic method for stochastic Hamiltonian system}
We consider the stochastic Hamiltonian system in the rough path sense (rough Hamiltonian system for short):
\begin{equation}\label{symeq}
\left\{
\begin{aligned}
dP_t=&-\frac{\partial \mathcal{H}_0(P_t,Q_t)}{\partial Q_t}dt-\sum^{d}_{l=1}\frac{\partial \mathcal{H}_l(P_t,Q_t)}{\partial Q_t}d X_t^{l},\quad P_{0}=p\in \mathbb{R}^{m};\\
dQ_t=&\frac{\partial \mathcal{H}_0(P_t,Q_t)}{\partial P_t}dt+\sum^{d}_{l=1}\frac{\partial \mathcal{H}_l(P_t,Q_t)}{\partial P_t}d X_t^{l},\quad~~~ Q_{0}=q\in \mathbb{R}^{m}.
\end{aligned}
\right.
\end{equation}
One characteristic property of the rough Hamiltonian system is that its phase flow preserves the symplectic structure. More precisely, the differential $2$-form ${\rm d}P \wedge {\rm d}Q$ is invariant under the phase flow. Here the differential is made with respect to the initial value $(p,q)$, which is different from the formal time derivative in \eqref{symeq}. 

\begin{lemma}\label{symp}(\cite[Theorem 3.1]{HHW18})
	The phase flow of the rough Hamiltonian system \eqref{symeq} preserves the symplectic structure, that is,
	\begin{align*}
	{\rm d}P \wedge {\rm d}Q={\rm d}p\wedge {\rm d}q,\quad a.s.
	\end{align*}
\end{lemma}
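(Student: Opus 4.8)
The plan is to reduce the invariance of the $2$-form to an algebraic identity on the Jacobian of the flow, and then to exploit the Hamiltonian structure of the vector fields. Write $Y=(P,Q)$ and $z=(p,q)$, and let $\mathbb{J}=\left(\begin{smallmatrix} 0 & \mathbb{I}_m \\ -\mathbb{I}_m & 0\end{smallmatrix}\right)$ be the standard symplectic matrix, so that \eqref{symeq} takes the compact form $dY_t=\sum_{l=0}^d V_l(Y_t)d\textbf{X}^{l}_t$ with $V_l=\mathbb{J}^{-1}\nabla\mathcal{H}_l$. Setting $J_t:=\frac{\partial Y_t}{\partial z}$ and expanding ${\rm d}P\wedge{\rm d}Q$ in the basis $\{{\rm d}p_i,{\rm d}q_i\}$, the assertion ${\rm d}P\wedge{\rm d}Q={\rm d}p\wedge{\rm d}q$ is exactly the pullback identity $J_t^{\top}\mathbb{J}J_t=\mathbb{J}$ for all $t\in[0,T]$, i.e.\ the statement that $J_t$ is a symplectic matrix. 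Since $J_0=\mathbb{I}_{2m}$ already satisfies this, it suffices to prove that $t\mapsto J_t^{\top}\mathbb{J}J_t$ is constant.

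By Lemma \ref{well}, $J_t$ solves the linear RDE $dJ_t=\sum_{l=0}^d DV_l(Y_t)J_t\,d\textbf{X}^{l}_t$ with $J_0=\mathbb{I}_{2m}$. Applying the Leibniz rule to the product $J_t^{\top}\mathbb{J}J_t$ gives
\begin{align*}
d\big(J_t^{\top}\mathbb{J}J_t\big)=\sum_{l=0}^d J_t^{\top}\Big[(DV_l(Y_t))^{\top}\mathbb{J}+\mathbb{J}\,DV_l(Y_t)\Big]J_t\,d\textbf{X}^{l}_t.
\end{align*}
The key point is that the bracketed matrix vanishes identically. Differentiating $V_l=\mathbb{J}^{-1}\nabla\mathcal{H}_l$ gives $DV_l=\mathbb{J}^{-1}\nabla^2\mathcal{H}_l$, so that $\mathbb{J}\,DV_l=\nabla^2\mathcal{H}_l$ is symmetric, while $\mathbb{J}^{\top}=-\mathbb{J}$ yields $(DV_l)^{\top}\mathbb{J}=-(DV_l)^{\top}\mathbb{J}^{\top}=-(\mathbb{J}\,DV_l)^{\top}=-\nabla^2\mathcal{H}_l$. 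Hence the integrand is zero, so $J_t^{\top}\mathbb{J}J_t\equiv J_0^{\top}\mathbb{J}J_0=\mathbb{J}$, and the symplecticity follows.

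The main obstacle is to justify the Leibniz rule for solutions of RDEs driven by the same rough path $\textbf{X}$, since it is precisely this step that would acquire an It\^o-type correction term in a non-geometric calculus. I would handle it through the approximation structure of Definition \ref{solution}: for each bounded-variation path $x^n$ with $S_{[p]}(x^n)\to\textbf{X}$, the equation $dy^n=V(y^n)dx^n$ is a genuine time-inhomogeneous Hamiltonian ODE with Hamiltonian $\sum_l\mathcal{H}_l\,\dot{x}^{n,l}_t$, whose Riemann--Stieltjes Jacobian $J^n_t$ obeys the ordinary Leibniz rule and is therefore symplectic, $(J^n_t)^{\top}\mathbb{J}J^n_t=\mathbb{J}$, by the classical deterministic computation carried out above. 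I would then pass to the limit: the continuity of the It\^o--Lyons map (underlying Lemma \ref{well}) guarantees $J^n_t\to J_t$, and since $(A,B)\mapsto A^{\top}\mathbb{J}B$ is continuous, the identity $J_t^{\top}\mathbb{J}J_t=\mathbb{J}$ survives in the limit, holding almost surely and simultaneously for all $t$. This reduces the rough computation to a deterministic one on the approximants together with a stability/convergence argument, which is the technically delicate part.
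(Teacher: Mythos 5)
Your proof is correct. Note that the paper does not actually prove Lemma \ref{symp}: it is imported from \cite[Theorem 3.1]{HHW18}, and your argument is essentially the one used to establish that result — reduce symplecticity to the Jacobian identity $J_t^{\top}\mathbb{J}J_t=\mathbb{J}$, observe that for each bounded-variation approximant $x^n$ in Definition \ref{solution} the classical computation applies (the integrand vanishes because $\mathbb{J}\,DV_l=\nabla^{2}\mathcal{H}_l$ is symmetric), and pass to the limit. The one step you rightly flag as delicate, the convergence $J^n_t\to J_t$, is covered by the framework the paper already sets up: the pair $(y^n,J^n)$ solves the system of Lemma \ref{well} driven by $S_{[p]}(x^n)$, and the continuity of the It\^o--Lyons map for this system (\cite[Theorem 11.6]{Friz}) gives convergence uniformly in $t$, so the identity holds almost surely for all $t$ simultaneously.
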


Denote by $\mathbb{J}_{2m}:=\left( \begin{array}{cc}
0 & \mathbb{I}_{m} \\
-\mathbb{I}_{m} & 0\end{array} \right)$ the standard symplectic matrix. Letting $Y:=(P^\top,Q^\top)^\top $, $z:=(p^\top,q^\top)^\top$ and $V_{l}(y):=\mathbb{J}_{2m}^{-1}\nabla \mathcal{H}_l(y)$, $l=0,\cdots,d$,
we obtain a compact form as equation \eqref{sdeH}.
Thus the stochastic modified equations of  numerical methods satisfying \eqref{expYn} for \eqref{symeq} are constructed similarly as in subsection \ref{sec3.1}. 

Since the symplectic numerical method is implicit in general, the truncation technique with respect to the increments of $X$ are proposed in \cite{Milstein} for the case that $X$ is the standard Brownian motion to avoid the explosion of the moments of the numerical solution. More precisely, the increment $X^l_{t_n,t_{n+1}}$ is substituted by $\Delta_{n+1,l}$, which is defined by 
\begin{align}\label{Delta}
\Delta_{n+1,l}:=\zeta_{n+1,l}\sqrt{h}
\end{align}
with
\begin{equation*}  
\zeta_{n+1,l}:=\left\{  
\begin{array}{ll}  
\xi_{n+1,l}, & |\xi_{n+1,l}|\le A_h,\\
A_h, & \xi_{n+1,l}>A_h,\\
-A_h, & \xi_{n+1,l}<-A_h.
\end{array}  
\right.
\end{equation*}  
Here $\xi_{n+1,l}$, $n=0,1,\cdots,N-1$, $l=1,\cdots,d$,  are independent and identically distributed standard normal random variables, and 
$A_h=\sqrt{k|\ln h|}$ is the threshold with  $k$ large enough such that the convergence order of the numerical method does not decrease. For instance, for numerical methods of strong order $1$, one can choose $k=4$. In this case, the expansion of the numerical solution becomes 
\begin{align}\label{expYn1}
Y^h_{n+1}=Y^h_n+\sum_{|\alpha|=1}^{\infty}d_\alpha(Y^h_n)h^{\alpha_0}\Delta^{\alpha_1}_{n+1,1}\cdots\Delta^{\alpha_d}_{n+1,d},
\end{align}
and then the stochastic modified equation is
\begin{equation}\label{Delmod}
\left\{
\begin{aligned}
\dot{\tilde{y}}_{t}
&=\sum_{|\alpha|=1}^{\infty}f_\alpha(\tilde{y}_{t})h^{\alpha_0-1}\Delta^{\alpha_1}_{n+1,1}\cdots\Delta^{\alpha_d}_{n+1,d}.,\quad t\in[t_n,t_{n+1}];\\
\tilde{y}_{0}&=z,
\end{aligned}
\right.
\end{equation}
where $\tilde{y}_{t}$ is continuous on $[0,T]$ and $f_\alpha$ are defined by \eqref{confj}.

Based on \eqref{Delmod}, we prove in the following that the stochastic modified equation associated to a stochastic symplectic method is still a Hamiltonian system, which gives a positive answer to Problem \ref{P1} in the introduction.

\begin{theorem}\label{tmfj}
	Assume that $V$ is bounded and coutinuously differentiable, and that all its derivatives are bounded. 
	If $Y^h_1(z)$, the one-step numerical solution, is given by applying a symplectic method satisfying \eqref{expYn1} and \eqref{ap-f1} to equation \eqref{symeq}, then for any $f_\alpha:\mathbb{R}^{2m}\rightarrow\mathbb{R}^{2m}$ in \eqref{Delmod}, there exists a Hamiltonian $\mathcal{H}_\alpha:\mathbb{R}^{2m}\rightarrow\mathbb{R}$ such that 
	\begin{align}\label{fjsym}
	f_\alpha(y)=\mathbb{J}_{2m}^{-1}\nabla \mathcal{H}_\alpha(y).
	\end{align}
\end{theorem}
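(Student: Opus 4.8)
The plan is to prove \eqref{fjsym} by induction on the total degree $N=|\alpha|$, using as the single geometric input the fact that the symplectic method applied to \eqref{symeq} produces a one-step map $z\mapsto Y^h_1(z)$ whose Jacobian $A:=\partial Y^h_1/\partial z$ satisfies the symplecticity relation $A^\top\mathbb{J}_{2m}A=\mathbb{J}_{2m}$ for every value of the increments $(h,\Delta_{1,1},\ldots,\Delta_{1,d})$. The central auxiliary fact I invoke is the classical criterion: a smooth vector field $f$ on $\mathbb{R}^{2m}$ is globally Hamiltonian, $f=\mathbb{J}_{2m}^{-1}\nabla\mathcal{H}$, if and only if $\mathbb{J}_{2m}f'(y)$ is symmetric for every $y$, equivalently $f'(y)^\top\mathbb{J}_{2m}+\mathbb{J}_{2m}f'(y)=0$. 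The ``only if'' is a direct computation, while the ``if'' follows from the Poincar\'e lemma, since $\mathbb{R}^{2m}$ is simply connected and the boundedness of all derivatives of $V$ (together with the recursion \eqref{confj}, which builds each $f_\alpha$ from derivatives of $V$) guarantees the smoothness of $f_\alpha$ needed to produce a global potential $\mathcal{H}_\alpha$.

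For the base case $|\alpha|=1$, the recursion \eqref{confj} gives $f_\alpha=d_\alpha$, and the consistency condition \eqref{ap-f1} identifies each degree-one coefficient with one of the original vector fields $V_l=\mathbb{J}_{2m}^{-1}\nabla\mathcal{H}_l$, $l=0,\ldots,d$, which is already of the form \eqref{fjsym}. For the inductive step I assume $f_{\alpha'}=\mathbb{J}_{2m}^{-1}\nabla\mathcal{H}_{\alpha'}$ for all $|\alpha'|<N$ and introduce the truncated modified vector field $F^{<N}$ obtained by discarding every term of degree $\ge N$ in \eqref{Delmod}. For each fixed $(h,\Delta)$ this field is a real linear combination of the Hamiltonian fields $f_{\alpha'}$, hence is itself Hamiltonian with Hamiltonian the corresponding combination of the $\mathcal{H}_{\alpha'}$; therefore its time-$h$ flow is symplectic, and its Jacobian $B$ satisfies $B^\top\mathbb{J}_{2m}B=\mathbb{J}_{2m}$.

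The decisive comparison is between $A$ and $B$. Grading the monomials $h^{\beta_0}\Delta_{1,1}^{\beta_1}\cdots\Delta_{1,d}^{\beta_d}$ by their total degree $|\beta|$ and reading off the one-step maps through \eqref{expyn}, I observe from the recursion that the degree-$N$ coefficient $\tilde f_\alpha$ differs from $f_\alpha$ only by the correction terms $D_{k^{i,1}}\cdots D_{k^{i,i-1}}f_{k^{i,i}}$, each built from coefficients of degree strictly less than $N$. Since the full and the truncated modified equations share all coefficients of degree below $N$, these corrections are common to both; consequently the two flows, and hence their Jacobians, agree through degree $N-1$, while the homogeneous degree-$N$ part of $A-B$ equals exactly $\sum_{|\alpha|=N}f_\alpha'(y)\,h^{\alpha_0}\Delta_{1,1}^{\alpha_1}\cdots\Delta_{1,d}^{\alpha_d}$.

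Finally I expand both identities $A^\top\mathbb{J}_{2m}A=\mathbb{J}_{2m}$ and $B^\top\mathbb{J}_{2m}B=\mathbb{J}_{2m}$ as formal power series in $(h,\Delta)$ and subtract their homogeneous degree-$N$ components. Because $A$ and $B$ coincide below degree $N$, every cross term of the form (degree $j$)$^\top\mathbb{J}_{2m}$(degree $N-j$) with $0<j<N$ cancels, leaving only the terms pairing the degree-$N$ part with the identity, namely $\sum_{|\alpha|=N}\big(f_\alpha'(y)^\top\mathbb{J}_{2m}+\mathbb{J}_{2m}f_\alpha'(y)\big)h^{\alpha_0}\Delta_{1,1}^{\alpha_1}\cdots\Delta_{1,d}^{\alpha_d}=0$. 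Linear independence of these monomials forces $f_\alpha'(y)^\top\mathbb{J}_{2m}+\mathbb{J}_{2m}f_\alpha'(y)=0$ for each $|\alpha|=N$, and the criterion of the first paragraph then furnishes the Hamiltonian $\mathcal{H}_\alpha$, closing the induction. I expect the main obstacle to be the multi-index bookkeeping of the third paragraph—verifying precisely that the lower-degree corrections in \eqref{expyn} are genuinely shared between the full and truncated maps, so that the subtraction isolates $f_\alpha'$—rather than the symplectic-geometry input, which reduces to the standard symmetry criterion.
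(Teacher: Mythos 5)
Your proposal reproduces the paper's skeleton faithfully: induction, a truncated modified equation whose flow is symplectic by the inductive hypothesis, subtraction of the two symplecticity identities to isolate $\mathbb{J}_{2m}f'_\alpha(y)+f'_\alpha(y)^\top\mathbb{J}_{2m}$, and the integrability (Poincar\'e) lemma to produce $\mathcal{H}_\alpha$. The genuine gap is in your extraction step. You grade by the total degree $|\alpha|$ and treat $(h,\Delta_{1,1},\ldots,\Delta_{1,d})$ as independent formal variables, equating homogeneous components of the two identities $A^\top\mathbb{J}_{2m}A=\mathbb{J}_{2m}$ and $B^\top\mathbb{J}_{2m}B=\mathbb{J}_{2m}$. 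Under the theorem's hypotheses this is not available: $V$ is only smooth with bounded derivatives (not analytic), so \eqref{expYn1} and the flow expansions are merely asymptotic series with remainders; and the symplecticity of $Y^h_1$ is known only almost surely, i.e.\ only on the coupled region where $\Delta_{1,l}=\zeta_{1,l}\sqrt{h}$ with $|\zeta_{1,l}|\le A_h$. Coefficients can therefore only be identified by letting $h\to 0$ along this coupling, and then $|\alpha|$ does not order the magnitudes: $h^{\alpha_0}\Delta_{1,1}^{\alpha_1}\cdots\Delta_{1,d}^{\alpha_d}$ has size $h^{\theta(\alpha)}$ (up to logarithms), where $\theta(\alpha)=\alpha_0+(\alpha_1+\cdots+\alpha_d)/2$. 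Concretely, the degree-$3$ monomial $\Delta_{1,1}\Delta_{1,2}\Delta_{1,3}\sim h^{3/2}$ dominates the degree-$2$ monomial $h^2$, so already in your inductive step at $N=2$ the discarded ``higher-degree'' part is \emph{larger} than the terms whose coefficients you want to read off, and no limiting argument can isolate your degree-$N$ identity from it.

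This is exactly why the paper inducts on the weight $\theta(\alpha)$ rather than on $|\alpha|$: it truncates the modified equation at $\theta(\alpha)\le r/2$, expands $Y^h_1(z)$ and the Jacobians with explicit remainders bounded by $C h^{(r+2)/2}$ in $L^p(\Omega)$, divides by $h^{(r+1)/2}$, and passes to the limit $h\to0$ in $L^2(\Omega)$, using also that the truncated variables $\zeta_{1,l}$ converge to the standard Gaussians $\xi_{1,l}$. This yields a single almost-sure polynomial identity in $\xi_{1,1},\ldots,\xi_{1,d}$ that mixes \emph{all} $\alpha$ with $\theta(\alpha)=(r+1)/2$; the individual matrices $\mathbb{J}_{2m}f'_\alpha(z)+f'_\alpha(z)^\top\mathbb{J}_{2m}$ are then separated by orthogonality of the Hermite polynomials with respect to the Gaussian law. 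Your ``linear independence of monomials'' does reappear here, but as a statement about the distribution of $\xi$ after the limit, not about formal variables $(h,\Delta)$. To salvage your cleaner formal-series argument you would need extra hypotheses the theorem does not grant, e.g.\ analyticity of $V$ (so the series converge) together with symplecticity of the one-step map for all $(h,\Delta)$ in an open neighborhood of the origin, rather than almost surely.
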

\begin{proof}
	From condition \eqref{ap-f1}, we have immediately that \eqref{fjsym} holds for $|\alpha|=1$. 
	Define $\theta(\alpha):=\alpha_0+\frac{\alpha_1+\cdots+\alpha_d}{2}$. Notice that if $|\alpha'|=|\alpha|+1$, then $\theta(\alpha')\ge \theta(\alpha)+\frac12$. For $r\in\mathbb{N}_+$, assume by induction that for any $\alpha$ such that $\theta(\alpha)\le \frac{r}{2}$, \eqref{fjsym} holds. Consider the truncated modified equation as follows:
	\begin{align*}
	\dot{\tilde{y}}^{r}_{t}&=\sum_{\theta(\alpha)=1/2}^{r/2}f_\alpha(\tilde{y}_{t}^{r})h^{\alpha_0-1}\Delta^{\alpha_1}_{1,1}\cdots\Delta^{\alpha_d}_{1,d},\quad \tilde{y}_{0}^{r}=z.
	\end{align*}
	Denote by $\pi^r(z)_t$ the flow of this truncated modified equation. Together with the assumptions, the recursion \eqref{confj} implies that there exists a random variable $R_{\theta(\alpha)}$ such that
	$\|R_{\theta(\alpha)}\|_{L^2(\Omega)}\le Ch^{\frac{r+2}{2}}$ and
	\begin{align*}
	Y^h_1(z)&=\pi^r(z)_h+\sum_{\theta(\alpha)=(r+1)/2}f_\alpha(z)h^{\alpha_0}\Delta^{\alpha_1}_{1,1}\cdots\Delta^{\alpha_d}_{1,d}+R_{\theta(\alpha)}, \quad a.s.
	\end{align*}
	Moreover, the Jacobian of the flow satisfies that 
	\begin{align*}
	\frac{\partial Y^h_1(z)}{\partial z}&=\frac{\partial \pi^r(z)_h}{\partial z}+\sum_{\theta(\alpha)=(r+1)/2}f'_\alpha(z)h^{\alpha_0}\Delta^{\alpha_1}_{1,1}\cdots\Delta^{\alpha_d}_{1,d}+
	\frac{\partial R_{\theta(\alpha)}}{\partial z}, \quad a.s.,\\
	\frac{\partial \pi^r(z)_h}{\partial z}&=\mathbb{I}_{2m}+R_h,\quad a.s.,
	\end{align*}
	with $f'_\alpha(z)=\frac{\partial f_\alpha(z)}{\partial z}$, $\Big\|\frac{\partial R_{\theta(\alpha)}}{\partial z}\Big\|_{L^p(\Omega)}\le C(p)h^{\frac{r+2}{2}}$, $\|R_h\|_{L^p(\Omega)}\le C(p)h^{\frac12}$ and $p\ge 2$.
	Since the method is symplectic, we have
	\begin{align*}
	\mathbb{J}_{2m}=\Big(\frac{\partial Y^h_1(z)}{\partial z}\Big)^\top \mathbb{J}_{2m} ~\frac{\partial Y^h_1(z)}{\partial z},\quad a.s.
	\end{align*}
    Substituting the expressions of the Jacobian $\frac{\partial Y^h_1(z)}{\partial z}$ into the above equality, we obtain 
	\begin{align*}
	\mathbb{J}_{2m}=&\Big(\frac{\partial \pi^r(z)_h}{\partial z}\Big)^\top \mathbb{J}_{2m} ~\frac{\partial \pi^r(z)_h}{\partial z}
	 +\sum_{\theta(\alpha)=(r+1)/2}\mathbb{J}_{2m} ~ f'_\alpha(z) h^{\alpha_0}\Delta^{\alpha_1}_{1,1}\cdots\Delta^{\alpha_d}_{1,d}
	 \\
	&+ \sum_{\theta(\alpha)=(r+1)/2}f'_\alpha(z)^\top 
	\mathbb{J}_{2m} ~  h^{\alpha_0}\Delta^{\alpha_1}_{1,1}\cdots\Delta^{\alpha_d}_{1,d}
	+R, \quad a.s.,
	\end{align*}
	where $\|R\|_{L^p(\Omega)}\le C(p)h^{\frac{r+2}{2}}$.
	Together with the induction assumption and the definition \eqref{Delta} of $\Delta_{1,l}$, it holds that 
	\begin{align*}
	\left[\sum_{\theta(\alpha)=(r+1)/2}\left(\mathbb{J}_{2m} ~ f'_\alpha(z)
+ f'_\alpha(z)^\top 
	\mathbb{J}_{2m} ~\right)  \zeta_{1,1}^{\alpha_1}\cdots \zeta_{1,d}^{\alpha_d}\right]
	=-h^{-\frac{r+1}{2}} R, \quad a.s. 
	\end{align*}
  Noticing that $\zeta_{1,l}$, $l=1,\cdots,d$ are truncated normal distribution random variables, we deduce that the left side of the above equality converges to 
  \begin{align*}
  \sum_{\theta(\alpha)=(r+1)/2}\left(\mathbb{J}_{2m} ~ f'_\alpha(z)
  + f'_\alpha(z)^\top 
  \mathbb{J}_{2m} ~\right)  \xi_{1,1}^{\alpha_1}\cdots \xi_{1,d}^{\alpha_d}
  \end{align*}
in $L^2(\Omega)$ sense as $h$ goes to $0$. On the other hand, the fact that $\big\|h^{-\frac{r+1}{2}} R\big\|_{L^2(\Omega)}$ converges to $0$ leads to 
	\begin{align}\label{J}
	\sum_{\theta(\alpha)=(r+1)/2}\left(\mathbb{J}_{2m} ~ f'_\alpha(z) 
	+ f'_\alpha(z)^\top 
	\mathbb{J}_{2m} \right)  \xi_{1,1}^{\alpha_1}\cdots \xi_{1,d}^{\alpha_d}=0,\quad a.s.
	\end{align}
	In \cite[Theorem 1.6]{Orth}, it is proved that there exists a unique monic orthogonal polynomial sequence $\{p_k(x)\}^\infty_{k=1}$ with respect to the measure induced by $\xi_{1,1}$, i.e., the Hermite polynomials. We rewrite \eqref{J} as
	\begin{align*}
	0=&\sum_{\theta(\alpha)=(r+1)/2,(\alpha_1+\cdots+\alpha_d)/2=\theta(\alpha)}\left(\mathbb{J}_{2m} ~ f'_\alpha(z)
	+ f'_\alpha(z)^\top 
	\mathbb{J}_{2m} \right)  p_{\alpha_1}(\xi_{1,1})\cdots p_{\alpha_d}(\xi_{1,d})\\
	&+\bigg[-\sum_{\theta(\alpha)=(r+1)/2,(\alpha_1+\cdots+\alpha_d)/2=\theta(\alpha)}\left(\mathbb{J}_{2m} ~ f'_\alpha(z) 
	+ f'_\alpha(z)^\top 
	\mathbb{J}_{2m} \right) \Big(p_{\alpha_1}(\xi_{1,1})\cdots p_{\alpha_d}(\xi_{1,d})- \xi_{1,1}^{\alpha_1}\cdots \xi_{1,d}^{\alpha_d}\Big)\\
	&+\sum_{\theta(\alpha)=(r+1)/2,(\alpha_1+\cdots+\alpha_d)/2<\theta(\alpha)}\left(\mathbb{J}_{2m} ~ f'_\alpha(z)
	+ f'_\alpha(z)^\top 
	\mathbb{J}_{2m} \right)  \xi_{1,1}^{\alpha_1}\cdots \xi_{1,d}^{\alpha_d}\bigg]\\
	=&:\sum_{\theta(\alpha)=(r+1)/2,(\alpha_1+\cdots+\alpha_d)/2=\theta(\alpha)}\left(\mathbb{J}_{2m} ~ f'_\alpha(z)
	+ f'_\alpha(z)^\top 
	\mathbb{J}_{2m} \right)  p_{\alpha_1}(\xi_{1,1})\cdots p_{\alpha_d}(\xi_{1,d})\\
	&+\sum_{(\alpha_1+\cdots+\alpha_d)/2<\theta(\alpha)}c_{\alpha_1,\cdots,\alpha_d} \xi_{1,1}^{\alpha_1}\cdots \xi_{1,d}^{\alpha_d},\quad a.s.,
	\end{align*}
	where we have used the fact that $p_{\alpha_1},\cdots,p_{\alpha_d}$ are monic. Similar arguments lead to 
	
	\begin{align*}
	0=&\sum_{\theta(\alpha)=(r+1)/2,(\alpha_1+\cdots+\alpha_d)/2=\theta(\alpha)}\left(\mathbb{J}_{2m} ~ f'_\alpha(z)
	+ f'_\alpha(z)^\top 
	\mathbb{J}_{2m} \right)  p_{\alpha_1}(\xi_{1,1})\cdots p_{\alpha_d}(\xi_{1,d})\\
	&+\sum_{(\alpha_1+\cdots+\alpha_d)/2<\theta(\alpha)}\bar{c}_{\alpha_1,\cdots,\alpha_d}p_{\alpha_1}(\xi_{1,1})\cdots p_{\alpha_d}(\xi_{1,d}),\quad a.s.
	\end{align*}
	For any $\alpha$ satisfying $\theta(\alpha)=(r+1)/2$ and $(\alpha_1+\cdots+\alpha_d)/2=\theta(\alpha)$, multiplying the above equation by $p_{\alpha_1}(\xi_{1,1})\cdots p_{\alpha_d}(\xi_{1,d})$ and taking the expectation, we deduce from the independency of $\xi_{1,1},\cdots,\xi_{1,d}$ and the orthogonality of $\{p_k(x)\}^\infty_{k=1}$ that 
	\begin{align*}
	\mathbb{J}_{2m} ~ f'_\alpha(z)
	+ f'_\alpha(z)^\top 
	\mathbb{J}_{2m}=0.
	\end{align*}
	Plugging it into \eqref{J} and rewriting it as before, we have
	\begin{align*}
	0=&\sum_{\theta(\alpha)=(r+1)/2,(\alpha_1+\cdots+\alpha_d)/2=\theta(\alpha)-1}\left(\mathbb{J}_{2m} ~ f'_\alpha(z) 
	+ f'_\alpha(z)^\top 
	\mathbb{J}_{2m} ~\right)  p_{\alpha_1}(\xi_{1,1})\cdots p_{\alpha_d}(\xi_{1,d})\\
	&+\sum_{(\alpha_1+\cdots+\alpha_d)/2<\theta(\alpha)-1}\tilde{c}_{\alpha_1,\cdots,\alpha_d} p_{\alpha_1}(\xi_{1,1})\cdots p_{\alpha_d}(\xi_{1,d}),\quad a.s.
	\end{align*}
	Then we also have for any $\alpha$ satisfying $\theta(\alpha)=(r+1)/2$ and $(\alpha_1+\cdots+\alpha_d)/2=\theta(\alpha)-1$, $\mathbb{J}_{2m} ~ f'_\alpha(z)+ f'_\alpha(z)^\top 
	\mathbb{J}_{2m} =0$. Repeatly using previous arguments, we finally have for any $\alpha$ satisfying $\theta(\alpha)=(r+1)/2$, $\mathbb{J}_{2m} ~ f'_\alpha(z)+ f'_\alpha(z)^\top 
	\mathbb{J}_{2m} =0$, i.e., $\mathbb{J}_{2m} ~ f'_\alpha(z)$ is symmetric. Then the statement \eqref{fjsym} follows from the integrability lemma \cite[Lemma 2.7 in Chap.  \uppercase\expandafter{\romannumeral6}]{GeometricEH}.
\end{proof}

\begin{remark}\label{rm-TRM}
In the proof of Theorem \ref{tmfj}, the truncated increments $\Delta_{1,l}$, $l=1,\cdots,d$ are used to ensure the $L^p(\Omega)$-integrability of the remainders. In fact, the coefficients $f_j$  are independent of the values of $\Delta_{1,l}$. Therefore, a similar result holds for the method \eqref{expYn} containing the powers of $X^l_{t_n,t_{n+1}}$. Besides, for the weak convergent  symplectic method which approximates $X^l_{t_n,t_{n+1}}$ by $\varsigma_{ln}\sqrt{h}$ with the random variable $\varsigma_{ln}$ defined through $\mathbb{P}(\varsigma_{ln}=\pm)=\frac12$, such as the method studied in \cite{2019BIT},  one can constructe the modified equation by regarding $X^l_{t_n,t_{n+1}}$ as  $\varsigma_{ln}\sqrt{h}$ and get the symplecticity of the modified equation.
\end{remark}

Based on \eqref{fjsym} and Remark \ref{rm-TRM}, we state a more general result in the following.

\begin{theorem}\label{tmsym}
	Assume that $V$ is bounded and coutinuously differentiable, and that all its derivatives are bounded. 
	If $Y^h_1(z)$, the one-step numerical solution, is given by applying a symplectic method satisfying  \eqref{expYn}-\eqref{ap-f1} to equation \eqref{symeq}, then the associated stochastic modified equation  \eqref{modified} is a Hamiltonian system. 
\end{theorem}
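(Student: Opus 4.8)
The plan is to deduce Theorem \ref{tmsym} from Theorem \ref{tmfj} by first exhibiting each coefficient $f_\alpha$ in \eqref{modified} as a Hamiltonian vector field, and then observing that on each subinterval the scalar increments merely recombine the individual Hamiltonians into a single one. First I would recall that the coefficients $f_\alpha$ are generated by the purely algebraic recursion \eqref{confj}, which expresses $f_\alpha$ in terms of $d_\alpha$ and the $f_{\alpha'}$ with $|\alpha'|<|\alpha|$ through the operators $D_{k^{i,j}}$. Crucially, this recursion makes no reference to the driving increments: the $f_\alpha$ depend only on the expansion coefficients $d_\alpha$ of the one-step map, not on whether the increment in slot $l$ is the Gaussian $X^l_{t_n,t_{n+1}}$ of \eqref{expYn} or the truncation $\Delta_{n+1,l}$ of \eqref{expYn1}. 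This is exactly the content of Remark \ref{rm-TRM}, and it is the point that lets me transfer the conclusion of Theorem \ref{tmfj} to the present untruncated setting.

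Next I would invoke Theorem \ref{tmfj} together with Remark \ref{rm-TRM}. Since the method is symplectic and satisfies \eqref{expYn}--\eqref{ap-f1}, and since the functions $f_\alpha$ are identical to those appearing in \eqref{Delmod}, the symplecticity constraint forces $\mathbb{J}_{2m}\,f'_\alpha(z)$ to be symmetric for every $\alpha$; the integrability lemma \cite[Lemma 2.7 in Chap.~\uppercase\expandafter{\romannumeral6}]{GeometricEH} then yields a scalar Hamiltonian $\mathcal{H}_\alpha$ with $f_\alpha=\mathbb{J}_{2m}^{-1}\nabla\mathcal{H}_\alpha$, exactly as in \eqref{fjsym}. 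The base case $|\alpha|=1$ is immediate from \eqref{ap-f1} and the Hamiltonian structure of \eqref{symeq}.

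With each $f_\alpha$ so represented, I would substitute into \eqref{modified}. On a fixed subinterval $[t_n,t_{n+1}]$ the factors $h^{\alpha_0-1}(X^1_{t_n,t_{n+1}})^{\alpha_1}\cdots(X^d_{t_n,t_{n+1}})^{\alpha_d}$ are (random but) fixed scalars, so the right-hand side is a scalar-weighted sum of gradients and equals $\mathbb{J}_{2m}^{-1}\nabla\widetilde{\mathcal{H}}_n(\tilde{y}_t)$, where $\widetilde{\mathcal{H}}_n(y):=\sum_{|\alpha|=1}^{\infty}\mathcal{H}_\alpha(y)\,h^{\alpha_0-1}(X^1_{t_n,t_{n+1}})^{\alpha_1}\cdots(X^d_{t_n,t_{n+1}})^{\alpha_d}$. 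Hence \eqref{modified} takes the Hamiltonian form $\dot{\tilde{y}}_t=\mathbb{J}_{2m}^{-1}\nabla\widetilde{\mathcal{H}}_n(\tilde{y}_t)$ on each subinterval, with a Hamiltonian that depends on $n$ through the increments, which is the assertion.

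The only genuinely delicate point is the transfer step. The argument in Theorem \ref{tmfj} relied on the truncation $\Delta_{n+1,l}$ to guarantee the $L^p(\Omega)$-integrability of the remainders $R$, $R_h$ and $\partial R/\partial z$, whereas \eqref{modified} carries the untruncated Gaussian increments, whose high moments may blow up along the recursion. I expect this to be the main obstacle, and the way around it is precisely the observation that the target identity $\mathbb{J}_{2m}\,f'_\alpha+(f'_\alpha)^\top\mathbb{J}_{2m}=0$ is a deterministic, pointwise-in-$z$ statement about the fixed functions $f_\alpha$. Once it is established in the truncated model, where the required moment estimates are available, it holds verbatim and may be read off for \eqref{modified} without any further probabilistic estimate; no integrability of the untruncated increments is ever needed to conclude the symplecticity of the modified vector field.
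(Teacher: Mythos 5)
Your proposal is correct and takes essentially the same route as the paper: the paper derives Theorem \ref{tmsym} precisely by combining Theorem \ref{tmfj} with Remark \ref{rm-TRM}, i.e., by observing that the coefficients $f_\alpha$ produced by the algebraic recursion \eqref{confj} are the same functions whether the increments are the truncated $\Delta_{n+1,l}$ or the Gaussian $X^l_{t_n,t_{n+1}}$, so the symplecticity identity $\mathbb{J}_{2m}f'_\alpha+(f'_\alpha)^\top\mathbb{J}_{2m}=0$ proved in the truncated setting transfers verbatim. Your final assembly step, writing the right-hand side of \eqref{modified} on each subinterval as $\mathbb{J}_{2m}^{-1}\nabla\widetilde{\mathcal{H}}_n$ with $\widetilde{\mathcal{H}}_n$ the scalar-weighted sum of the $\mathcal{H}_\alpha$, is exactly what the paper leaves implicit in declaring \eqref{modified} a Hamiltonian system.
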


\section{Convergence analysis}\label{sec4}

In general, the stochastic modified equation is a formal one, i.e., the series given in \eqref{modified} may not converge. We consider the $\tilde{N}$-truncated modified equation ($\tilde{N}\ge 1$)
\begin{equation}\label{TME}
\left\{
\begin{aligned}
\dot{\tilde{y}}^{\tilde{N}}_{t}&=\sum_{|\alpha|=1}^{\tilde{N}}f_\alpha(\tilde{y}_{t}^{\tilde{N}})h^{\alpha_0-1}(X^{1}_{t_{n},t_{n+1}})^{\alpha_1}\cdots(X^{d}_{t_{n},t_{n+1}})^{\alpha_d},\quad t\in[t_n,t_{n+1}];\\ \tilde{y}^{\tilde{N}}_{0}&=z.
\end{aligned}
\right.
\end{equation}
In subsection \ref{sub4.1}, we give the convergence analysis on the error between $Y^h_n$ and  $\tilde{y}^{\tilde{N}}_{t_n}$ for the case that $X$ is a general Gaussian rough path satisfying Assumption \ref{R}, which answers Problem \ref{P2}. As for Problem \ref{P3}, we focus on the case that $X$ is the standard Brownian motion and the numerical method is obtained via the truncated increments in \eqref{Delta}. We provide an approach to optimizing $\tilde{N}$ such that the error is exponentially small with respect to $h$, in subsection \ref{sub4.2}.

\subsection{The general rough case}\label{sub4.1}
 
\begin{theorem}\label{tm-Nlocal}
Under Assumption \ref{R},
if $V\in Lip^{\tilde{N}+1}$, 
then for any $p>2\rho$, there exists a random variable $C(\omega)=C(\omega,p,\|V\|_{Lip^{\tilde{N}+1}},\tilde{N})$ such that 
\begin{align*}
\|\tilde{y}^{\tilde{N}}_{t_1}-Y^{h}_{1}\|\le C(\omega)h^{\frac{\tilde{N}+1}{p}}, \quad{\rm a.s.},
\end{align*}
where $\tilde{y}^{\tilde{N}}$ is the solution of \eqref{TME} and $Y^h_1$ is defined by a numerical method satisfying \eqref{expYn}.
\end{theorem}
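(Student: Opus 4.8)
The plan is to insert a common stochastic Taylor polynomial of degree $\tilde N$ (graded by $|\alpha|$) between $\tilde y^{\tilde N}_{t_1}$ and $Y^h_1$ and to bound each resulting remainder by $h^{(\tilde N+1)/p}$. First I would extract the purely algebraic content of the construction: in the recursion \eqref{confj}, every $f_\alpha$ with $|\alpha|\le\tilde N$ is a combination of $f_\beta$ with $|\beta|<|\alpha|$, so all the $f_\beta$ entering any such $f_\alpha$ survive the truncation in \eqref{TME}. Running the chain-rule expansion \eqref{expyn} for the truncated flow therefore reproduces exactly the same coefficients $\tilde f_\alpha=d_\alpha$ for every $|\alpha|\le\tilde N$ as the untruncated construction. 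Consequently both the numerical step and the truncated flow share the polynomial
\[
P_{\tilde N}:=z+\sum_{|\alpha|=1}^{\tilde N}d_\alpha(z)\,h^{\alpha_0}(X^{1}_{t_0,t_1})^{\alpha_1}\cdots(X^{d}_{t_0,t_1})^{\alpha_d},
\]
and it suffices to estimate $\|\tilde y^{\tilde N}_{t_1}-P_{\tilde N}\|$ and $\|Y^h_1-P_{\tilde N}\|$ separately.

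For the modified side I would use that, on $[0,h]=[t_0,t_1]$, equation \eqref{TME} is a random ODE with vector field $\sum_{|\alpha|=1}^{\tilde N}f_\alpha(\cdot)h^{\alpha_0-1}(X^1_{t_0,t_1})^{\alpha_1}\cdots(X^d_{t_0,t_1})^{\alpha_d}$. Its time-Taylor expansion, reorganised by the homogeneity grading $|\alpha|$, coincides with $P_{\tilde N}$ up to terms of grading $\ge\tilde N+1$. Taylor's theorem with remainder then controls $\|\tilde y^{\tilde N}_{t_1}-P_{\tilde N}\|$ by the grading-$(\tilde N+1)$ terms, whose vector-field part is built from at most $\tilde N$ derivatives of $V$ together with the Lipschitz continuity of the top derivative — exactly the information encoded in $V\in Lip^{\tilde N+1}$. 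Here the rough-path framework of \cite{Friz} justifies interpreting the driver in the low-regularity regime, and the rough Taylor (Davie--Euler) estimate can be invoked to package the remainder rigorously in $p$-variation, producing a bound of the form $C\|S_{[p]}(x^h)\|^{\tilde N+1}_{p\text{-}var;[0,h]}$.

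The almost sure size of the increments comes from the Gaussian rough path theory of Section \ref{sec2}: under Assumption \ref{R}, for any $p>2\rho$ the lift exists and Kolmogorov/Garsia--Rodemich--Rumsey-type estimates provide a random constant with $|X^l_{t_0,t_1}|\le C(\omega)h^{1/p}$ almost surely. Since a factor $h^{\alpha_0}$ never exceeds $h^{\alpha_0/p}$ for $p\ge 1$, every grading-$k$ monomial with $|\alpha|=k$ is bounded a.s. by $C(\omega)h^{k/p}$, the extreme case $\alpha_0=0$ giving the dominant $h^{k/p}$. Feeding the grading-$(\tilde N+1)$ terms into the bound of the previous paragraph yields $\|\tilde y^{\tilde N}_{t_1}-P_{\tilde N}\|\le C(\omega)h^{(\tilde N+1)/p}$.

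The remaining and, I expect, most delicate step is bounding $\|Y^h_1-P_{\tilde N}\|=\big\|\sum_{|\alpha|\ge\tilde N+1}d_\alpha(z)h^{\alpha_0}(X^1_{t_0,t_1})^{\alpha_1}\cdots(X^d_{t_0,t_1})^{\alpha_d}\big\|$, since \eqref{expYn} is a formal series that must not be summed termwise. Rather than summing it, I would treat the tail as the Taylor remainder of the smooth one-step map $z\mapsto Y^h_1$ in the increments: for a concrete scheme such as the Runge--Kutta method \eqref{RK}, this map is a composition of $Lip^{\tilde N+1}$ functions, so Taylor's theorem together with the a.s. increment bounds above gives a remainder of order $h^{(\tilde N+1)/p}$. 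Combining the two estimates through the triangle inequality gives the claimed $\|\tilde y^{\tilde N}_{t_1}-Y^h_1\|\le C(\omega)h^{(\tilde N+1)/p}$ almost surely. The hard part is precisely this uniform control of the formal tail: one must verify that the derivatives of $V$ generated both by the recursion \eqref{confj} and by the scheme can be absorbed into a single constant $C(\omega,p,\|V\|_{Lip^{\tilde N+1}},\tilde N)$, and that the homogeneity grading is respected so that no lower-order term degrades the rate.
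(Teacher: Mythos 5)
Your proposal is correct and follows essentially the same route as the paper's proof: both rest on the observation that the recursion \eqref{confj} forces $f^{\tilde N}_\alpha=f_\alpha=d_\alpha$ for $1\le|\alpha|\le\tilde N$, on the almost sure H\"older bound $\|X\|_{\frac1p\text{-}{\rm H\ddot{o}l};[t_0,t_1]}<\infty$ from Assumption \ref{R} giving increments of size $C(\omega)h^{1/p}$, and on identifying the leading error term at grading $\tilde N+1$ (worst case $\alpha_0=0$), which yields $h^{(\tilde N+1)/p}$. The paper compresses all of this into a few lines via ``the leading term of the error,'' whereas you make the intermediate Taylor polynomial $P_{\tilde N}$, the two remainder estimates, and the non-termwise treatment of the formal tail explicit --- a more careful write-up of the same argument.
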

\begin{proof}
	Consider the expansion
	\begin{align*}
	\tilde{y}^{\tilde{N}}_h=z+\sum_{|\alpha|=1}^{\infty}f^{\tilde{N}}_\alpha(z)h^{\alpha_0}(X^{1}_{t_{0},t_{1}})^{\alpha_1}\cdots(X^{d}_{t_{0},t_{1}})^{\alpha_d}.
	\end{align*}
	Fix $p>2\rho\ge 2$.
     Since the recursion \eqref{confj} implies
	$f^{\tilde{N}}_\alpha=f_\alpha=d_\alpha$ with $1\le |\alpha|\le \tilde{N}$, and Assumption \ref{R} produces $\|X\|_{\frac1p\text{-}{\rm H\ddot{o}l};[t_0,t_1]}<\infty$, 
	we deduce from the Taylor's expansion that the leading term of the error between $\tilde{y}^{\tilde{N}}_{t_1}$ and $Y^{h}_{1}$ is involved with $h^{\alpha_0}(X^{1}_{t_{0},t_{1}})^{\alpha_1}\cdots(X^{d}_{t_{0},t_{1}})^{\alpha_d}$, where $\alpha_0=0$ and $\alpha_1\cdots+\alpha_d=\tilde{N}+1$.
	Hence,
\begin{align*}
\|\tilde{y}^{\tilde{N}}_{t_1}-Y^{h}_{1}\|\le C(\omega,p,\|V\|_{Lip^{\tilde{N}+1}},\tilde{N})h^{\frac{\tilde{N}+1}{p}}.
\end{align*}
\end{proof}

\begin{theorem}\label{tm-Nglobal}
	Under Assumption \ref{R},
	if $V\in Lip^{\tilde{N}+\gamma}$ with $\gamma>2\rho$ and $\tilde{N}>2\rho-1$,
	then for any $\gamma>p>2\rho$, there exists a random variable $C(\omega)=C(\omega,p,\gamma,\|V\|_{Lip^{\tilde{N}+\gamma}},\tilde{N},T)$  such that 
\begin{align*}
\sup_{1\le n\le N}\|\tilde{y}^{\tilde{N}}_{t_n}-Y^{h}_{n}\|\le C(\omega) h^{\frac{\tilde{N}+1}{p}-1}, \quad{\rm a.s.},
\end{align*}
where $\tilde{y}^{\tilde{N}}$ is the solution of \eqref{TME} and $Y^h_n$ is defined by a numerical method satisfying \eqref{expYn}.
\end{theorem}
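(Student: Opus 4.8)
The plan is to derive the global estimate from the one-step estimate of Theorem \ref{tm-Nlocal} by a discrete Gronwall (``Lady Windermere's fan'') argument, in which the accumulation of the local errors is controlled through the Lipschitz continuity of the It\^o--Lyons solution map rather than through the (exploding) vector field of \eqref{TME}. Write $e_n:=\tilde y^{\tilde N}_{t_n}-Y^h_n$ with $e_0=0$, and let $\Phi_n$ denote the one-step flow on $[t_n,t_{n+1}]$ of the truncated modified equation \eqref{TME}, so that $\tilde y^{\tilde N}_{t_{n+1}}=\Phi_n(\tilde y^{\tilde N}_{t_n})$. Inserting $\Phi_n(Y^h_n)$ gives
\[
e_{n+1}=\big(\Phi_n(\tilde y^{\tilde N}_{t_n})-\Phi_n(Y^h_n)\big)+\big(\Phi_n(Y^h_n)-Y^h_{n+1}\big),
\]
whence $\|e_{n+1}\|\le L_n\|e_n\|+\eta_n$, with $L_n$ the Lipschitz constant of $\Phi_n$ and $\eta_n:=\|\Phi_n(Y^h_n)-Y^h_{n+1}\|$ the local error issued from $Y^h_n$.

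First I would bound the local errors uniformly in $n$. Since $V\in Lip^{\tilde N+\gamma}$ with $\gamma>p$, the recursion \eqref{confj} shows that the truncated coefficients $f_\alpha$ ($|\alpha|\le\tilde N$), and hence the modified vector field in \eqref{Wzkmod}, lie in $Lip^{\gamma'}$ for some $\gamma'>p$; thus Lemma \ref{well} applies on each block and the argument of Theorem \ref{tm-Nlocal} runs verbatim with initial datum $Y^h_n$ and the restricted signal $\mathbf X|_{[t_n,t_{n+1}]}$. The leading error term carries $(X^1_{t_n,t_{n+1}})^{\alpha_1}\cdots(X^d_{t_n,t_{n+1}})^{\alpha_d}$ with $\alpha_1+\cdots+\alpha_d=\tilde N+1$, and bounding the local increments by the global H\"older norm through $\|\mathbf X\|_{p\text{-}var;[t_n,t_{n+1}]}\le\|\mathbf X\|_{\frac1p\text{-} {\rm H\ddot{o}l};[0,T]}\,h^{1/p}$ yields $\eta_n\le C(\omega)\,h^{(\tilde N+1)/p}$, with $C(\omega)$ depending only on $\|\mathbf X\|_{\frac1p\text{-} {\rm H\ddot{o}l};[0,T]}$, $\|V\|_{Lip^{\tilde N+\gamma}}$, $\tilde N$ and $p$, and in particular independent of $n$.

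The decisive step is to control the product $\prod_n L_n$ uniformly in $h$. I would \emph{not} estimate $L_n$ by the Lipschitz constant of the vector field of \eqref{TME}: that constant is of order $h^{1/p-1}$ (coming from the $h^{\alpha_0-1}X^l_{t_n,t_{n+1}}$ terms), so integrating it over the $N=T/h$ steps would produce the hopeless bound $\exp(CTh^{1/p-1})\to\infty$. Instead I would use the Lipschitz dependence of the solution of the RDE \eqref{Wzkmod} on its initial condition, which gives $L_n\le\exp\!\big(C\,\varpi_h(t_n,t_{n+1})\big)$ with $\varpi_h(s,t):=\|\mathbf x^h\|_{p\text{-}var;[s,t]}^{p}$ the canonical control of the lift $\mathbf x^h$ of $x^h$. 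Because $\varpi_h$ is superadditive, $\sum_n\varpi_h(t_n,t_{n+1})\le\|\mathbf x^h\|_{p\text{-}var;[0,T]}^{p}$, and the stability of the piecewise-linear lift bounds $\|\mathbf x^h\|_{p\text{-}var;[0,T]}$ by $\|\mathbf X\|_{p\text{-}var;[0,T]}$ uniformly in $h$; hence $\prod_n L_n\le\exp\!\big(C\|\mathbf X\|_{p\text{-}var;[0,T]}^{p}\big)=:M(\omega)<\infty$ a.s.

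Finally, iterating $\|e_{n+1}\|\le L_n\|e_n\|+\eta_n$ and using $L_j\ge1$ gives $\|e_n\|\le\big(\prod_j L_j\big)\sum_{k=0}^{N-1}\eta_k\le M(\omega)\,N\,C(\omega)\,h^{(\tilde N+1)/p}$, and since $N=T/h$ the factor $N$ produces exactly the loss of one power of $h$, so that $\sup_{1\le n\le N}\|e_n\|\le C(\omega)\,h^{(\tilde N+1)/p-1}$. The two standing hypotheses enter here: $\gamma>2\rho$ secures the existence of an admissible $p\in(2\rho,\gamma)$ for which the truncated RDE is well posed and its flow is Lipschitz, while $\tilde N>2\rho-1$ guarantees that, choosing $p$ close enough to $2\rho$, the exponent $(\tilde N+1)/p-1$ is strictly positive, i.e.\ the bound is genuinely convergent. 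The main obstacle is the product estimate of the third paragraph: the whole gain of the rough-path formulation is that the flow's dependence on initial data is governed by the superadditive control $\|\mathbf x^h\|_{p\text{-}var}^{p}$, which remains finite as $h\to0$, whereas any pathwise bound passing through the vector field of \eqref{TME} would blow up.
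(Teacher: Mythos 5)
Your overall architecture (Lady Windermere's fan, the uniform local error taken from Theorem \ref{tm-Nlocal}, the uniform-in-$h$ bound on the lift of $x^h$, and the count $N=T/h$ producing the loss of one power of $h$) is the same as the paper's, and your second paragraph is fine. But the step you yourself call decisive contains a genuine gap: the per-step bound $L_n\le\exp\big(C\,\varpi_h(t_n,t_{n+1})\big)$ is false. The one-step flow of an RDE deviates from the identity at \emph{first} order in the driving signal, so its Lipschitz constant is $1+O\big(\varpi_h(t_n,t_{n+1})^{1/p}\big)=1+O(h^{1/p})$, not $1+O\big(\varpi_h(t_n,t_{n+1})\big)=1+O(h)$. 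Concretely, for $m=d=1$ and $V(y)=y$ the one-step map is $y\mapsto y\,e^{X_{t_n,t_{n+1}}}$, whose exact Lipschitz constant $e^{X_{t_n,t_{n+1}}}$ is not bounded by $e^{C|X_{t_n,t_{n+1}}|^{p}}$ for small increments, because $p>2$. With the correct per-step estimate your product argument only yields $\prod_n L_n\le\exp\big(C\sum_n\varpi_h(t_n,t_{n+1})^{1/p}\big)\approx\exp\big(CTh^{\frac1p-1}\big)$, i.e.\ exactly the divergent bound you set out to avoid. Superadditivity cannot rescue this: it applies to $\varpi_h$, not to $\varpi_h^{1/p}$, and $\sum_n\varpi_h(t_n,t_{n+1})^{1/p}\approx N h^{1/p}=Th^{\frac1p-1}\to\infty$. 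Worse, the target inequality $\prod_n L_n\le M(\omega)$ is not just unproven but generally unattainable: with non-commuting vector fields in dimension $\ge 2$, the product of the norms of the one-step Jacobians really does diverge as $h\to0$, and only the norm of their \emph{ordered product} stays bounded, thanks to cancellations in the signal that no per-step bound can record.

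The repair is exactly what the paper does: never multiply one-step constants. Since $\Phi_{k-1}\circ\cdots\circ\Phi_s$ is the flow of the single RDE \eqref{Wzkmod} (with the truncated vector field $\bar V^{\tilde N}$) over $[t_s,t_k]$ driven by $x^h$, one applies the Lipschitz continuity of the It\^o--Lyons map (\cite[Theorem 10.26]{Friz}) \emph{once over that whole interval}, obtaining for each propagated local error the single factor $C\exp\{C\bar{\nu}^{p}\|S_{[p]}(x^{h})(\omega)\|^{p}_{p\text{-}var;[t_{s},t_{k}]}\}\le C\exp\{C\bar{\nu}^{p}\|S_{[p]}(x^{h})(\omega)\|^{p}_{p\text{-}var;[0,T]}\}$, which is bounded uniformly in $h$ by \cite[Theorem 15.28]{Friz}. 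In the decomposition $\|Y^h_k-\tilde y^{\tilde N}_{t_k}\|\le\sum_{s=1}^{k}\|\pi(t_s,Y^h_s,x^h)_{t_k}-\pi(t_{s-1},Y^h_{s-1},x^h)_{t_k}\|$, each of the $k\le N$ terms then carries this factor \emph{additively}, and summing against the local error of Theorem \ref{tm-Nlocal} gives $C(\omega)\,N\,h^{(\tilde N+1)/p}=C(\omega)\,T\,h^{(\tilde N+1)/p-1}$. The essential point your proposal misses is that the uniform-in-$h$ rough-path norm of $x^h$ over \emph{long} intervals is precisely where the cancellations live; passing through per-step Lipschitz constants discards them irretrievably.
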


\begin{proof}
	Similar to \eqref{Wzkmod}, we rewrite the truncated modified equation \eqref{TME} into
   \begin{align*}
d\tilde{y}^{\tilde{N}}_t&=\sum_{|\alpha|=1}^{\tilde{N}}f_\alpha(\tilde{y}^{\tilde{N}}_t)h^{\alpha_0-1}(X^1_{t_n,t_{n+1}})^{\alpha_1}\cdots(X^d_{t_n,t_{n+1}})^{\alpha_d}dt\\
&=\sum_{|\alpha|=1}^{\tilde{N}}f_\alpha(\tilde{y}^{\tilde{N}}_t)h^{\alpha_0}(X^1_{t_n,t_{n+1}})^{\alpha_1}\cdots(X^{i(\alpha)}_{t_n,t_{n+1}})^{\alpha_{i(\alpha)}-1}\cdots(X^d_{t_n,t_{n+1}})^{\alpha_d}\left(\frac{X^{i(\alpha)}_{t_n,t_{n+1}}}{h}\right)dt\\
&=\sum_{|\alpha|=1}^{\tilde{N}}f_\alpha(\tilde{y}^{\tilde{N}}_t)h^{\alpha_0}(X^1_{t_n,t_{n+1}})^{\alpha_1}\cdots(X^{i(\alpha)}_{t_n,t_{n+1}})^{\alpha_{i(\alpha)}-1}\cdots(X^d_{t_n,t_{n+1}})^{\alpha_d}dx^{h,i(\alpha)}_{t}\\
&=:\bar{V}^{\tilde{N}}(\tilde{y}^{\tilde{N}}_t)dx^{h}_{t},\quad t\in(t_n,t_{n+1}].
   \end{align*}
	This shows that \eqref{TME} is equivalent to a rough differential equation with vector field $\bar{V}^{\tilde{N}}$ driven by $x^h_t$. Denoting by  $\pi(t_{0},y_{0},x^{h})_{t}$, $t \geq t_{0}$ its flow with the initial value $y_{0}$ at time $t_{0}$, we have
	\begin{align*}
	\|Y^{h}_{k}-\tilde{y}^{\tilde{N}}_{t_k}\|
	=&\|\pi(t_{k},Y^{h}_{k},x^{h})_{t_{k}}-\pi(t_{0},Y^{h}_{0},x^{h})_{t_{k}}\|\\
	\le& \sum^{k}_{s=1}\|\pi(t_{s},Y^{h}_{s},x^{h})_{t_{k}}-\pi(t_{s-1},Y^{h}_{s-1},x^{h})_{t_{k}}\|,\quad 1\le k\le N.
	\end{align*}
	According to the Lipschitz continuity of the It\^o--Lyons map (see e.g. \cite[Theorem 10.26]{Friz}), we get
	\begin{align*}
	&\|\pi(t_{s},Y^{h}_{s},x^{h})_{t_{k}}-\pi(t_{s-1},Y^{h}_{s-1},x^{h})_{t_{k}}\|\\
	=&\|\pi(t_{k-1},\pi(t_{s},Y^{h}_{s},x^{h})_{t_{k-1}},x^{h})_{t_{k}}-\pi(t_{k-1},\pi(t_{s-1},Y^{h}_{s-1},x^{h})_{t_{k-1}},x^{h})_{t_{k}}\|\\
	\leq& C\exp\{C\bar{\nu}^{p}\|S_{[p]}(x^{h})(\omega)\|^{p}_{p\text{-}var;[t_{k-1},t_{k}]}\}\|\pi(t_{s},Y^{h}_{s},x^{h})_{t_{k-1}}-\pi(t_{s-1},Y^{h}_{s-1},x^{h})_{t_{k-1}}\|,\quad 1\le s<k,
	\end{align*}
	where $C=C(p,\gamma)$ and $\bar{\nu}=\bar{\nu}(\|X\|_{\frac1p\text{-}{\rm H\ddot{o}l};[0,T]}(\omega),\|V\|_{Lip^{\tilde{N}+\gamma}},\tilde{N})\ge\|\bar{V}\|_{Lip^{\gamma}}$.
	From
	\begin{align*}
	\|S_{[p]}(x^{h})(\omega)\|^{p}_{p\text{-}var;[u_1,u_2]}+\|S_{[p]}(x^{h})(\omega)\|^{p}_{p\text{-}var;[u_2,u_3]}\le \|S_{[p]}(x^{h})(\omega)\|^{p}_{p\text{-}var;[u_1,u_3]}, \quad 0\le u_1<u_2<u_3\le T,
	\end{align*}
	it yields that
	\begin{align*}
	&\|\pi(t_{s},Y^{h}_{s},x^{h})_{t_{k}}-\pi(t_{s-1},Y^{h}_{s-1},x^{h})_{t_{k}}\|\\
	\leq& C\exp\{C\bar{\nu}^{p}\|S_{[p]}(x^{h})(\omega)\|^{p}_{p\text{-}var;[t_{s},t_{k}]}\}\|\pi(t_{s},Y^{h}_{s},x^{h})_{t_{s}}-\pi(t_{s-1},Y^{h}_{s-1},x^{h})_{t_{s}}\|\\
	\leq& C\exp\{C\bar{\nu}^{p}\|S_{[p]}(x^{h})(\omega)\|^{p}_{p\text{-}var;[0,T]}\}\|Y^{h}_s-\pi(t_{s-1},Y^{h}_{s-1},x^{h})_{t_{s}}\|,\quad 1\le s\le k.
	\end{align*}
	Recall that for almost all $\omega\in\Omega$, $\|S_{[p]}(x^{h})(\omega)\|^{p}_{p\text{-}var;[0,T]}$ is uniformly bounded with respect to $h$ (see \cite[Theorem 15.28]{Friz}).
	We derive by Theorem \ref{tm-Nlocal} that 
	\begin{align*}
	\|Y^{h}_{k}-\tilde{y}^{\tilde{N}}_{t_k}\|
	\le& \sum^{k}_{s=1}C\exp\{C\bar{\nu}^{p}\|S_{[p]}(x^{h})(\omega)\|^{p}_{p\text{-}var;[0,T]}\}\|Y^{h}_s-\pi(t_{s-1},Y^{h}_{s-1},x^{h})_{t_{s}}\|\\
	\le& C(\omega,p,\gamma,\|V\|_{Lip^{\tilde{N}+\gamma}},\tilde{N},T)h^{\frac{\tilde{N}+1}{p}-1},
	\end{align*}
	due to the fact $\gamma>2\rho\ge2$.
\end{proof}

In case of additive noise, since the diffusion part can be simulated exactly, the assumptions \eqref{expYn}-\eqref{ap-f1} on the numerical method degenerate to 
 \begin{align}\label{expYnadd}
Y^h_{n+1}=Y^h_n+\sum_{|\alpha|=1}V_\alpha(Y^h_n)h^{\alpha_0}(X^{1}_{t_{n},t_{n+1}})^{\alpha_1}\cdots(X^{d}_{t_{n},t_{n+1}})^{\alpha_d}+\sum_{|\alpha|=2,\alpha_0\ge 1}^{\infty}d_\alpha(Y^h_n)h^{\alpha_0}(X^{1}_{t_{n},t_{n+1}})^{\alpha_1}\cdots(X^{d}_{t_{n},t_{n+1}})^{\alpha_d}.
\end{align}
Consequently, the convergence rate of the error between $Y^h_n$ and $\tilde{y}^{\tilde{N}}_{t_n}$  is improved, which is stated in the following corollary.

\begin{corollary}\label{tm-Nglobaladd}
		Let Assumption \ref{R} hold and $V_i(y)\equiv\sigma_i\in \mathbb{R}^m$, $i=1,\cdots,d$. 
	If $V_0\in Lip^{\tilde{N}+\gamma}$ with $\gamma>2\rho$,
	then for any $\gamma>p>2\rho$, there exists a random variable $C(\omega)=C(\omega,p,\gamma,\|V_0\|_{Lip^{\tilde{N}+\gamma}},\sigma_i,\tilde{N},T)$  such that 
	\begin{align*}
\sup_{0\le n\le N}\|\tilde{y}^{\tilde{N}}_{t_n}-Y^{h}_{n}\|\le C(\omega) h^{\frac{\tilde{N}}{p}}, \quad{\rm a.s.},
\end{align*}
	where $\tilde{y}^{\tilde{N}}$ is the solution of \eqref{TME} and $Y^h_n$ is defined by a numerical method satisfying \eqref{expYnadd}.
\end{corollary}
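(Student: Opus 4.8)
The plan is to follow the two-step template of Theorems \ref{tm-Nlocal} and \ref{tm-Nglobal}—a local one-step estimate followed by a telescoping global estimate via the It\^o--Lyons map—the decisive difference being an improved local rate that exploits the additive structure $V_i\equiv\sigma_i$. The crux is a structural observation: when the diffusion fields are constant, the pure-noise modified coefficients vanish beyond first order, i.e. $f_\alpha\equiv 0$ whenever $\alpha_0=0$ and $|\alpha|\ge 2$. First I would prove this by induction on $|\alpha|$ through the recursion \eqref{confj}. The base case $|\alpha|=1$ follows from \eqref{ap-f1}, which gives $f_\alpha=V_\alpha=\sigma_l$, a constant. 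For the inductive step with $\alpha_0=0$ and $|\alpha|\ge 2$, assumption \eqref{expYnadd} forces $d_\alpha=0$, while in each summand $(D_{k^{i,1}}\cdots D_{k^{i,i-1}}f_{k^{i,i}})(y)$ of \eqref{confj} every multi-index inherits $k^{i,j}_0=0$; hence each $f_{k^{i,j}}$ is either a constant (when $|k^{i,j}|=1$), whose derivative annihilates the summand, or vanishes by the induction hypothesis (when $|k^{i,j}|\ge 2$). Either way the summand is zero, so $f_\alpha=0$.

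With the structural lemma established, I would carry out the local analysis as in Theorem \ref{tm-Nlocal}. The same induction, applied to the Taylor-expansion coefficients produced by \eqref{expyn}, shows that the pure-noise expansion coefficients of both $\tilde y^{\tilde N}_{t_1}$ and $Y^h_1$ vanish for $|\alpha|\ge 2$; since the two expansions already coincide through order $\tilde N$, the leading discrepancy—located at $|\alpha|=\tilde N+1$ and governed by $f_\alpha$—can therefore carry only indices with $\alpha_0\ge 1$. Consequently the leading error term $h^{\alpha_0}(X^1_{t_0,t_1})^{\alpha_1}\cdots(X^d_{t_0,t_1})^{\alpha_d}$ contains at least one explicit factor of $h$, and using $\|X\|_{\frac1p\text{-}{\rm H\ddot{o}l};[t_0,t_1]}<\infty$ from Assumption \ref{R} (each increment contributing $h^{1/p}$), the worst such term, with $\alpha_0=1$ and $\alpha_1+\cdots+\alpha_d=\tilde N$, is of order $h^{1+\tilde N/p}$. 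This upgrades the local rate to
\begin{align*}
\|\tilde y^{\tilde N}_{t_1}-Y^h_1\|\le C(\omega)\,h^{1+\frac{\tilde N}{p}},\quad{\rm a.s.}
\end{align*}

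Finally I would transfer this to the global bound by repeating verbatim the argument of Theorem \ref{tm-Nglobal}. Rewriting \eqref{TME} as an RDE driven by $x^h$ with modified field $\bar V^{\tilde N}$, whose $Lip^{\gamma}$ seminorm is controlled by $\|V_0\|_{Lip^{\tilde N+\gamma}}$ and the constants $\sigma_i$, the Lipschitz continuity of the It\^o--Lyons map together with the almost-sure uniform bound on $\|S_{[p]}(x^h)\|^p_{p\text{-}var;[0,T]}$ gives $\sup_n\|Y^h_n-\tilde y^{\tilde N}_{t_n}\|\le C(\omega)\sum_{s=1}^{N}\|Y^h_s-\pi(t_{s-1},Y^h_{s-1},x^h)_{t_s}\|$. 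Summing the $N\sim h^{-1}$ local errors, each of size $h^{1+\tilde N/p}$, yields the claimed rate $h^{\tilde N/p}$, the single lost power of $h$ being precisely the step count $N$. I expect the main obstacle to be the structural lemma of the first paragraph: the bookkeeping that every higher-order pure-noise term is killed—either by differentiating a constant field or by the induction hypothesis—is exactly what buys the extra factor of $h$ in the local estimate and hence the improved global exponent $\tilde N/p$; the local and global steps themselves are routine adaptations of the two preceding theorems.
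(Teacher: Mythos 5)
Your proposal is correct and follows essentially the same route as the paper: an improved local estimate of order $h^{1+\tilde{N}/p}$ obtained from the additive structure, followed by the telescoping/It\^o--Lyons argument of Theorem \ref{tm-Nglobal}, losing one power of $h$ over the $N\sim h^{-1}$ steps. Your structural lemma (that $f_\alpha\equiv 0$ whenever $\alpha_0=0$ and $|\alpha|\ge 2$, proved by induction through \eqref{confj}) is simply an explicit justification of what the paper compresses into the phrase ``combining \eqref{expYnadd} with \eqref{confj}, the leading term of the local error has $\alpha_0=1$ and $\alpha_1+\cdots+\alpha_d=\tilde{N}$,'' so it is a welcome elaboration rather than a different method.
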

\begin{proof}
Combining \eqref{expYnadd} with \eqref{confj}, we have that 
the leading term of the local error between $\tilde{y}^{\tilde{N}}_{t_1}$ and $Y^{h}_{1}$ is involved with $h^{\alpha_0}(X^{1}_{t_{0},t_{1}})^{\alpha_1}\cdots(X^{d}_{t_{0},t_{1}})^{\alpha_d}$, where $\alpha_0=1$ and $\alpha_1\cdots+\alpha_d=\tilde{N}$. Then 
\begin{align*}
\|\tilde{y}^{\tilde{N}}_{t_1}-Y^{h}_{1}\|\le C(\omega)h^{\frac{\tilde{N}}{p}+1}, \quad{\rm a.s.},
\end{align*}
from which we conclude the result by using the same arguments as in the proof of Theorem \ref{tm-Nglobal}.
\end{proof}


\subsection{The standard Brownian case}\label{sub4.2}
In this subsection, we assume that $X^l$, $l=1,\cdots,d$ are independent standard Brownian motions. 
For convenience, we illustrate our idea by the RK method
\begin{equation}\label{TrunRK}
\left\{\
\begin{aligned}
Y^h_{n+1,i}&=Y^h_{n}+\sum^{s}_{j=1}a_{ij}\left(V_0(Y^h_{n+1,j})h+\sum_{l=1}^{d}V_l(Y^h_{n+1,j})\Delta_{n+1,l}\right),\\
Y^h_{n+1}&=Y^h_{n}+\sum^{s}_{i=1}b_{i}\left(V_0(Y^h_{n+1,i})h+\sum_{l=1}^{d}V_l(Y^h_{n+1,i})\Delta_{n+1,l}\right),
\end{aligned}
\right.
\end{equation}
where $\Delta_{n+1,l}$ is defined in \eqref{Delta}. We also stress that the procedure does not rely on the special structure of RK methods and is avaliable for a large class of numerical methods.

We first show that the method containing $\Delta_{n+1,l}$ also fits into the previous convergence analysis when the stochatic modified equation is \eqref{Delmod}. 
Then it sufficies to prove that the process $\bar{x}^{h}=(\bar{x}^{h,1},\cdots,\bar{x}^{h,d})$, which is defined by
\begin{align*}
\bar{x}^{h,l}_t:= \bar{x}^{h,l}_{t_n}+\dfrac{t-t_{n}}{h}\Delta_{n+1,l},\quad \forall~t \in (t_{n},t_{n+1}],~~l=1,\cdots,d,~~ n=0,\cdots,N-1,
\end{align*}
 can be lifted to a $p$-rough path with $[p]=2$ almost surely. 

\begin{proposition}
	Let $2<p<3$. Then it holds that there exists some random variable $C(\omega):=C(\omega,p,T)$ independent of $h$ such that 
	\begin{align*}
	\left\|  S_2(\bar{x}^h(\omega)) \right\|_{p\text{-}var;[0,T]}\le C(\omega),\quad a.s.
	\end{align*}
\end{proposition}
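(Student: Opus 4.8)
The plan is to control the $p$-variation of the lift $S_2(\bar{x}^h)$ uniformly in $h$ by exploiting the piecewise-linear structure of $\bar{x}^h$ together with the truncated-Gaussian nature of its increments. Recall that $\bar{x}^h$ is piecewise linear and interpolates the values $\bar{x}^{h,l}_{t_n}=\sum_{j=1}^{n}\Delta_{j,l}$, so on each subinterval $(t_n,t_{n+1}]$ the increment is simply $\Delta_{n+1,l}$. The key observation is that $\bar{x}^h$ is itself the piecewise-linear approximation (at mesh size $h$) of the \emph{random walk} built from the truncated increments $\Delta_{j,l}$. Since $\Delta_{n+1,l}=\zeta_{n+1,l}\sqrt{h}$ with $|\zeta_{n+1,l}|\le A_h=\sqrt{k|\ln h|}$ truncated standard normals, the increments have all the quantitative Gaussian-type tail bounds needed for the general lifting theory.

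First I would invoke the general theory for lifting piecewise-linear (or bounded-variation) paths: by \cite[Theorem 15.33]{Friz} (or the uniform estimate \cite[Theorem 15.28]{Friz} already cited in the proof of Theorem \ref{tm-Nglobal}), a centered Gaussian process whose covariance has finite $\rho$-variation with $\rho<3/2$ admits a canonical lift to a geometric $p$-rough path for $p\in(2\rho,3)$, with $p$-variation norm bounded almost surely. The strategy is therefore to verify that the increments $\{\Delta_{n+1,l}\}$, and hence the interpolating path $\bar{x}^h$, satisfy a discrete covariance/variation condition uniform in $h$. Concretely, I would estimate the covariance of the truncated increments, $\mathbb{E}[\Delta_{i,l}\Delta_{j,l}]=\delta_{ij}\,h\,\mathbb{E}[\zeta_{i,l}^2]$, note that $\mathbb{E}[\zeta_{i,l}^2]\le 1$ uniformly, and check that the resulting piecewise-linear path has covariance of finite $\rho$-variation for $\rho=1$ (the independent-increment structure makes this essentially the Brownian scaling, controlled uniformly in $h$ because the truncation only decreases the variance).

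The main technical step is to obtain the almost-sure uniform bound in $h$, for which I would use a Kolmogorov-type criterion for rough paths (e.g.\ \cite[Theorem 3.1 or Chap.\ 15]{Friz}): it suffices to bound, uniformly in $h$ and in $0\le s<t\le T$, the moments
\begin{align*}
\mathbb{E}\big[\|\bar{x}^h_{s,t}\|^{2}\big]\le C|t-s|,\qquad
\mathbb{E}\big[\|\mathbb{X}^{h,(2)}_{s,t}\|^{q}\big]\le C|t-s|^{q},
\end{align*}
where $\mathbb{X}^{h,(2)}$ denotes the iterated-integral (second-level) component of $S_2(\bar{x}^h)$ and $q$ is chosen large enough (depending on $p$) to apply the Garsia--Rodemich--Rumsey / Kolmogorov continuity argument. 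Because $\bar{x}^h$ is piecewise linear, the second-level iterated integrals are computed explicitly as sums of products $\Delta_{i,l}\Delta_{j,l'}$ plus the diagonal Lévy-area-free terms, and the independence of the $\zeta$'s across different time steps keeps all these moments controlled by the corresponding Brownian ones, uniformly in $h$. The boundedness $|\zeta|\le A_h$ is harmless here since it only improves the tail estimates; what matters is that the variances stay bounded by those of genuine Gaussians. Finally, a Borel--Cantelli argument along a sequence $h\to 0$, combined with the subadditivity of $\|\cdot\|^p_{p\text{-var}}$ over consecutive intervals (the superadditivity relation already used in the proof of Theorem \ref{tm-Nglobal}), upgrades the moment bounds to the desired almost-sure estimate with a single random constant $C(\omega)=C(\omega,p,T)$ independent of $h$.

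The hard part will be establishing the moment bound for the second level $\mathbb{X}^{h,(2)}_{s,t}$ \emph{uniformly in $h$} across all pairs $(s,t)$ that straddle several mesh points, since within a single subinterval the piecewise-linear iterated integral behaves differently (it is deterministic in the area sense, the area vanishing for a straight line) than across many subintervals where the genuine discrete Lévy-area-type cancellations must be quantified. I would handle this by decomposing any interval $[s,t]$ into at most two partial end-cells plus a block of full cells, bounding the full-cell contribution by the discrete iterated sums (which match the Brownian area moments up to the truncation, hence are uniformly controlled by \cite[Theorem 15.33]{Friz}) and bounding the partial-cell remainders by the elementary estimate $\|\Delta_{n+1,l}\|\le A_h\sqrt{h}$ times the local linear interpolation factor. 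Collecting these yields the required $|t-s|^{q}$-scaling, and the proof concludes.
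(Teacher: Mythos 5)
Your plan's technical core is the same as the paper's proof: you bound moments of both levels of $S_2(\bar{x}^h)$ by decomposing any $[s,t]$ into two partial end-cells plus a block of full cells, using independence, mean zero, and $\mathbb{E}[\Delta_{1,1}^{2m}]\le (2m-1)!!\,h^m$, and then convert these moment bounds into an almost sure H\"older estimate by a Kolmogorov/GRR-type argument (the paper uses the Besov--H\"older embedding \cite[Corollary A.2]{Friz} with $q=4m>\frac{4p}{p-2}$ so that $\frac12-\frac2q>\frac1p$), finishing with the comparison of the $\frac1p$-H\"older and $p$-variation norms. Two caveats, though. First, your appeals to \cite[Theorem 15.33]{Friz} --- both as the overall ``strategy'' of verifying a covariance $\rho$-variation condition and as the tool controlling the full-cell iterated sums --- are not legitimate here: that theorem concerns \emph{Gaussian} processes, and $\bar{x}^h$ is not Gaussian (truncated normals are not Gaussian, and for non-Gaussian processes a covariance bound alone does not control higher moments or the second level). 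This does not sink the proposal only because you also sketch the correct substitute, namely the direct computation of moments of the sums $\sum_k\Delta_{k,l}$ and $\sum\sum\Delta_{i,l_1}\Delta_{j,l_2}$ from independence and boundedness; in a final write-up the citation must be replaced by exactly that computation, which is what the paper does.

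Second, your closing Borel--Cantelli step does not deliver what you claim. For each $h$ the increments are built from fresh i.i.d.\ normals, so the norms $Z_N:=\|S_2(\bar{x}^{T/N})\|_{p\text{-}var;[0,T]}$ for different $N$ are independent, each with a nondegenerate limiting law; uniform-in-$h$ moment (or even Gaussian-tail) bounds then give via Borel--Cantelli only that $Z_N$ grows at most like a power or logarithm of $N$ along a sequence, while the second Borel--Cantelli lemma in fact forces $\limsup_N Z_N=\infty$ almost surely for any fixed threshold. So a single pathwise constant $C(\omega)$ bounding \emph{all} $h$ simultaneously is not obtainable this way. To be fair, the paper does not attempt this either: what its proof actually establishes --- and what your corrected argument also establishes --- is the moment bound $\mathbb{E}\big[\|S_2(\bar{x}^h)\|^q_{\frac1p\text{-}{\rm H\ddot{o}l};[0,T]}\big]\le C(q,T)$ uniformly in $h$, i.e.\ for each $h$ an almost surely finite bound whose distribution (not whose realization) is independent of $h$. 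You should state the conclusion in that form and drop the Borel--Cantelli claim.
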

\begin{proof}
	Let $t_{i-1}<s<t_i<t_j<t<t_{j+1}$. Since for any $m\in \mathbb{N}_+$, $\mathbb{E}\left[ \Delta_{1,1} ^{2m}\right]\le (2m-1)!!h^m$ and $\mathbb{E}\left[\Delta_{1,1} ^{2m-1}\right]=0$, we have
	\begin{align*}
	\mathbb{E}\left[\left| \int_{t_i}^{t_j}d\bar{x}^{h,l}_{u_1} \right|^{2m}\right]
	&=\mathbb{E}\left[\left(  \sum_{k=i+1}^{j}  \Delta_{k,l} \right)^{2m}\right]\le (2m)! (j-i)^m \mathbb{E}\left[\left|   \Delta_{1,1} \right|^{2m}\right]\le C(m)|t_j-t_i|^m,\\
	\mathbb{E}\left[\left| \int_{s}^{t_i}d\bar{x}^{h,l}_{u_1} \right|^{2m}\right]
	&=\mathbb{E}\left[\left( \frac{t_i-s}{h} \Delta_{i,l} \right)^{2m}\right]\le  \left(\frac{t_i-s}{h} \right)^{2m}\mathbb{E}\left[\left|   \Delta_{1,1} \right|^{2m}\right]
	\le  (2m-1)!! \frac{(t_i-s)^{2m}}{h^{m}} 
	\le C(m)|t_i-s|^m,\\
		\mathbb{E}\left[\left| \int_{t_j}^{t}d\bar{x}^{h,l}_{u_1} \right|^{2m}\right]
	&=\mathbb{E}\left[\left( \frac{t-t_j}{h} \Delta_{i,l} \right)^{2m}\right]\le 
	 \left(\frac{t-t_j}{h} \right)^{2m}\mathbb{E}\left[\left|   \Delta_{1,1} \right|^{2m}\right]
	 \le  (2m-1)!! \frac{(t-t_j)^{2m}}{h^{m}} 
	 \le C(m)|t-t_j|^m.
	\end{align*}
	Combining the above estimates, we obtain
	\begin{align*}
	\mathbb{E}\left[\left| \int_{s}^{t}d\bar{x}^{h,l}_{u_1} \right|^{2m}\right]&\le C(m)\left\{\mathbb{E}\left[\left| \int_{s}^{t_i}d\bar{x}^{h,l}_{u_1} \right|^{2m}\right]+\mathbb{E}\left[\left| \int_{t_i}^{t_j}d\bar{x}^{h,l}_{u_1} \right|^{2m}\right]+\mathbb{E}\left[\left| \int_{t_j}^{t}d\bar{x}^{h,l}_{u_1} \right|^{2m}\right]\right\}\\
	&\le C(m)|t-s|^m.
	\end{align*}

	For iterated integral, letting $t_{i-1}<s<t_i<t_j<t<t_{j+1}$ and $l_1,l_2\in\{1,\cdots,d\}$, we derive
	\begin{align*}
	\mathbb{E}\left[\left| \int_{t_i}^{t_j}\int_{t_i}^{u_1}d\bar{x}^{h,l_1}_{u_2}d\bar{x}^{h,l_2}_{u_1} \right|^{2m}\right]
	&\le (4m)! (j-i)^{2m} \mathbb{E}\left[\left|   \Delta_{1,1} \right|^{4m}\right]\le C(m)|t_j-t_i|^{2m},\quad l_1=l_2,\\
	\mathbb{E}\left[\left| \int_{t_i}^{t_j}\int_{t_i}^{u_1}d\bar{x}^{h,l_1}_{u_2}d\bar{x}^{h,l_2}_{u_1} \right|^{2m}\right]
	&\le \left((2m)!(j-i)^{m} \mathbb{E}\left[\left|   \Delta_{1,1} \right|^{2m}\right]\right)^2\le C(m)|t_j-t_i|^{2m},\quad l_1\neq l_2.
	\end{align*}
	Besides, 
   	\begin{align*}
   \mathbb{E}\left[\left| \int_{t_i}^{t_j}\int_{s}^{t_i}d\bar{x}^{h,l_1}_{u_2}d\bar{x}^{h,l_2}_{u_1} \right|^{2m}\right]
   &\le \left(\frac{t_i-s}{h} \right)^{2m}\mathbb{E}\left[\left|   \Delta_{1,1} \right|^{2m} \right] \mathbb{E}\left[\left| \int_{t_i}^{t_j}d\bar{x}^{h,l}_{u_1} \right|^{2m}\right]
   \le C(m)|t_i-s|^m|t_j-t_i|^m,\\
   \mathbb{E}\left[\left| \int_{s}^{t_i}\int_{s}^{u_1}d\bar{x}^{h,l_1}_{u_2}d\bar{x}^{h,l_2}_{u_1} \right|^{2m}\right]
   &\le \left(\frac{t_i-s}{h} \right)^{4m}\left(\mathbb{E}\left[\left|   \Delta_{1,1} \right|^{2m}\right]\right)^2
   \le C(m)|t_i-s|^{2m}.
   \end{align*}
	Similarly, it holds that
	\begin{align*}
	\mathbb{E}\left[\left| \int_{t_j}^{t}\int_{s}^{u_1}d\bar{x}^{h,l_1}_{u_2}d\bar{x}^{h,l_2}_{u_1} \right|^{2m}\right]
\le C(m)|t-t_j|^m|t_j-s|^m+C(m)|t-t_j|^{2m}.
	\end{align*}
	Therefore, we obtain
	\begin{align*}
	\mathbb{E}\left[\left| \int_{s}^{t}\int_{s}^{u_1}d\bar{x}^{h,l_1}_{u_2}d\bar{x}^{h,l_2}_{u_1} \right|^{2m}\right]\le C(m)|t-s|^{2m}.
	\end{align*}
	
	For any $p$ such that $2<p<3$, i.e., $\frac13<\frac1p<\frac12$, choose $m\in\mathbb{N}_+$ such that $q:=4m>\frac{4p}{p-2}$ which implies $(\frac{1}{2}-\frac{1}{q})-\frac{1}{q}>\frac{1}{p}$. 
    By the Besov-H\"older embedding theorem \cite[Corollary A.2]{Friz}, we get
    \begin{align*}
    \left\|  S_2(\bar{x}^h) \right\|^q_{((\frac12-\frac1q)-\frac1q)\text{-}{\rm H\ddot{o}l};[0,T]}
    \le C(q)
    \int_{0}^{T}\int_{0}^{T}\frac{\left|  {\rm d} (S_2(\bar{x}^h)_s,S_2(\bar{x}^h)_t)\right|^q}{|t-s|^{1+q(\frac12-\frac1q)}}dsdt,
    \end{align*}
    where
		\begin{align*}
	{\rm d} (S_2(\bar{x}^h)_s,S_2(\bar{x}^h)_t)\le C\max\left\{ \left| \int_{s}^{t}d\bar{x}^{h,l}_{u_1} \right|,
	\left| \int_{s}^{t}\int_{s}^{u_1}d\bar{x}^{h,l_1}_{u_2}d\bar{x}^{h,l_2}_{u_1} \right|^{\frac12} 
	\right\}\\
	\le C\left(\left| \int_{s}^{t}d\bar{x}^{h,l}_{u_1} \right|+
	\left| \int_{s}^{t}\int_{s}^{u_1}d\bar{x}^{h,l_1}_{u_2}d\bar{x}^{h,l_2}_{u_1} \right|^{\frac12} 
	\right).
	\end{align*}
	Taking the expectation on both sides, we obtain 
	\begin{align*}
	\mathbb{E}\left[\left\|  S_2(\bar{x}^h) \right\|^q_{(\frac12-\frac1q-\frac1q)\text{-}{\rm H\ddot{o}l};[0,T]}\right]
	&\le C(q)
	\int_{0}^{T}\int_{0}^{T}\frac{\mathbb{E}\left[\left|  {\rm d} (S_2(\bar{x}^h)_s,S_2(\bar{x}^h)_t)\right|^q\right]}{|t-s|^{\frac{q}{2}}}dsdt\\
	&\le C(q)
	\int_{0}^{T}\int_{0}^{T}\frac{\mathbb{E}\left[\left| \int_{s}^{t}d\bar{x}^{h,l}_{u_1} \right|^q+
		\left| \int_{s}^{t}\int_{s}^{u_1}d\bar{x}^{h,l_1}_{u_2}d\bar{x}^{h,l_2}_{u_1} \right|^{\frac{q}{2}} \right]}{|t-s|^{\frac{q}{2}}}dsdt\\
	&\le C(q,m)T^2.
	\end{align*}
	This yields that $\bar{x}^h$ can be lifted to a $p$-rough path almost surely, and that there exists some random variable $C(\omega)$ independent of $h$ such that 
	\begin{align*}
	\left\|  S_2(\bar{x}^h(\omega)) \right\|_{p\text{-}var;[0,T]}\le C\left\|  S_2(\bar{x}^h)(\omega) \right\|_{\frac1p\text{-}{\rm H\ddot{o}l};[0,T]}
	 \le  C(\omega,p,T),\quad a.s.
	\end{align*}
\end{proof}

For the $\tilde{N}$-truncated modified equation
\begin{equation}\label{TME1}
\left\{
\begin{aligned}
\dot{\tilde{y}}^{\tilde{N}}_{t}&=\sum_{|\alpha|=1}^{\tilde{N}}f_\alpha(\tilde{y}_{t}^{\tilde{N}})h^{\alpha_0-1}\Delta^{\alpha_1}_{1,1}\cdots\Delta^{\alpha_d}_{1,d},\quad t\in[0,h];\\
\tilde{y}_{0}^{\tilde{N}}&=z,
\end{aligned}
\right.
\end{equation}
we prove that there exists some truncated number $\tilde{N}=\tilde{N}(h)$ such that the local error is exponentially small with respect to the time step size $h$, which answers Problem \ref{P3}.

\begin{theorem}\label{tm-local}
	Let $0<\epsilon<\frac12$. Assume that $V_l$, $l=0,1,\cdots,d$ are analytic on the closed ball $$B_{2R}(z):=\left\{y\in \mathbb{C}^m: \|y-z\|\le 2R \right\}$$ with 
	\begin{align*}
	\|V_l(y)\|\le M, \quad \forall ~y\in B_{2R}(z).
	\end{align*}
	 Then for sufficiently small $h$, there exists $\tilde{N}=\tilde{N}(h)$ such that 
	\begin{align*}
	\|\tilde{y}^{\tilde{N}}_{t_1}-Y^h_1\|\le Che^{- h_0/h^{\frac12-\epsilon} },
	\end{align*}
	where $\tilde{y}^{\tilde{N}}_{t_1}$ is the solution of \eqref{TME1} and $Y^h_1$ is defined by the one-step numerical method \eqref{TrunRK}
\end{theorem}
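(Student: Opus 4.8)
The plan is to follow the strategy of deterministic backward error analysis (as in \cite[Chap.  IX]{GeometricEH}), adapted to the present rough/stochastic setting, by estimating the numerical solution, the modified-equation coefficients, and the truncated modified equation in succession, and then optimizing the truncation index $\tilde{N}=\tilde{N}(h)$. The central quantity to control is $\theta(\alpha)=\alpha_0+(\alpha_1+\cdots+\alpha_d)/2$, introduced earlier, which records the effective power of $h$ carried by the monomial $h^{\alpha_0}\Delta^{\alpha_1}_{1,1}\cdots\Delta^{\alpha_d}_{1,d}$, since the truncated increments obey $|\Delta_{1,l}|=|\zeta_{1,l}|\sqrt{h}\le A_h\sqrt{h}=\sqrt{kh|\ln h|}$.

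First I would exploit the analyticity hypothesis. Since each $V_l$ is analytic on $B_{2R}(z)$ with $\|V_l\|\le M$ there, Cauchy's integral formula yields the derivative bounds $\|D^kV_l(y)\|\le M\,k!\,R^{-k}$ for $y\in B_R(z)$. For sufficiently small $h$ the truncated increments are small, so the implicit stage equations in \eqref{TrunRK} form a small analytic perturbation of $Y^h_{1,i}=z$; the analytic implicit function theorem then guarantees that the stages $Y^h_{1,i}$, and hence $Y^h_1$, are analytic in $(h,\Delta_{1,1},\ldots,\Delta_{1,d})$ on a polydisc of radius independent of $h$, remain in $B_R(z)$, and admit the expansion \eqref{expYn1}. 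Applying Cauchy's estimates on this polydisc produces geometric bounds $\|d_\alpha(z)\|\le M\,C^{|\alpha|}$ on the coefficients of the scheme.

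Next I would propagate these bounds through the recursion \eqref{confj} to control the modified coefficients $f_\alpha$. The aim is to prove, by induction on $|\alpha|$, a factorial bound $\|f_\alpha(z)\|\le M\,\tilde{C}^{|\alpha|}|\alpha|!$: each application of the operator $D_{k^{i,j}}g=g'f_{k^{i,j}}$ costs one derivative (controlled by the Cauchy factor $R^{-1}$ together with a factorial from differentiating $f$), while the sum over the decomposition set $O_i$ contributes a combinatorial factor that the factorial on the right-hand side is designed to absorb. The same induction applied to the truncated field $\bar{V}^{\tilde{N}}$ gives matching bounds for the coefficients $f^{\tilde{N}}_\alpha$ in the expansion of $\tilde{y}^{\tilde{N}}_{t_1}$. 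Since $f^{\tilde{N}}_\alpha=f_\alpha$ for $|\alpha|\le\tilde{N}$, the difference $Y^h_1-\tilde{y}^{\tilde{N}}_{t_1}=\sum_{|\alpha|>\tilde{N}}(f_\alpha-f^{\tilde{N}}_\alpha)(z)\,h^{\alpha_0}\Delta^{\alpha_1}_{1,1}\cdots\Delta^{\alpha_d}_{1,d}$ is the tail of a factorially bounded series, whose leading contribution sits at $|\alpha|=\tilde{N}+1$.

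Finally I would estimate and optimize this tail. Using $|\alpha|\le 2\theta(\alpha)$ together with the coefficient bound and $|h^{\alpha_0}\Delta^{\alpha_1}_{1,1}\cdots\Delta^{\alpha_d}_{1,d}|\le(h|\ln h|)^{\theta(\alpha)}$ up to constants, and bounding polynomially the number of multi-indices with a fixed value of $\theta(\alpha)$, the tail is dominated by a series of the form $\sum_{\theta\ge(\tilde{N}+1)/2}(2\theta)!\,(C^2 h|\ln h|)^{\theta}$. Stirling's formula shows that the choice $\tilde{N}\sim c/\sqrt{h|\ln h|}$ minimizes the summand and yields a bound of order $\exp\{-c'/\sqrt{h|\ln h|}\}$; since $h^{-\epsilon}/\sqrt{|\ln h|}\to\infty$ as $h\to 0$, this is in turn bounded by $Ch\,e^{-h_0/h^{1/2-\epsilon}}$, which is the claim. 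I expect the main obstacle to be the inductive estimate on $f_\alpha$ in the third step: tracking how the Cauchy bounds survive the repeated differentiations encoded in the operators $D_{k^{i,j}}$ while summing over all decompositions in $O_i$ is delicate, being the stochastic multi-index analogue of the tree-based coefficient estimates of the deterministic theory, with the extra bookkeeping of separating the $h$-powers from the increment-powers.
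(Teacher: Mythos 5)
Your proposal follows essentially the same route as the paper's own proof: analyticity plus a fixed-point (implicit-function) argument for the implicit RK stages gives geometric bounds on the numerical coefficients $d_\alpha$ (Lemma \ref{lm-dj}), the recursion \eqref{confj} is controlled on shrinking balls to produce the factorial-type growth $\sum_{|\alpha|=J}\|f_\alpha\|\le C(CJ/R)^{J-1}$ of the modified coefficients (Lemma \ref{lm-fj}, implemented there via Cauchy estimates on nested domains and a majorant generating function solving $b=\frac{c_1\xi}{1-c_2\xi}+e^{b}-1-b$, i.e.\ exactly the delicate induction you flag as the main obstacle), the expansion coefficients $f^{\tilde N}_\alpha$ of the truncated flow are bounded by keeping the solution in $B_{R/2}(z)$ under the constraint \eqref{N} and applying Cauchy's estimate again (Lemma \ref{lm-fjN}), and finally the truncation $\tilde N\sim h_0h^{-(1/2-\epsilon)}$ makes the two tails exponentially small. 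The only deviations are cosmetic: the tail of $Y^h_1$ carries the coefficients $d_\alpha$ rather than $f_\alpha$, and your Stirling optimization over $\theta(\alpha)$ is replaced in the paper by imposing \eqref{N} and summing geometric series, both of which yield the stated bound.
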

 
 Before proving Theorem \ref{tm-local}, we recall the Cauchy's estimate for analytic functions, and give four lemmas about estimates for the truncated increments, the numerical solution, the modified equation and the truncated modified equation, respectively.

 \begin{lemma}(Cauchy's estimate)
 	Suppose that $f$ is analytic on a neighbourhood of the closed ball $B(y^*,R)$ and 
$M_R=\max\{|f(y)|:y\in B(y^*,R) \}<\infty$,
 	then 
 	\begin{align*}
 	f^{(n)}(y^*)\le \frac{n!M_R}{R^n}.
 	\end{align*}
 \end{lemma}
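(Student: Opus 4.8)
The plan is to derive the bound from the Cauchy integral representation of the higher derivatives together with the elementary length-times-supremum estimate for contour integrals. Since $f$ is analytic on a neighbourhood of the closed ball $B(y^*,R)\subset\mathbb{C}^m$, I first reduce to a single complex variable: for a fixed direction $v\in\mathbb{C}^m$ with $\|v\|=1$, define $g(\lambda):=f(y^*+\lambda v)$ for $\lambda$ ranging over a neighbourhood of the closed disc $\{\lambda\in\mathbb{C}:|\lambda|\le R\}$. Then $g$ is holomorphic in the single variable $\lambda$, it satisfies $|g(\lambda)|\le M_R$ on the disc because $y^*+\lambda v\in B(y^*,R)$ whenever $|\lambda|\le R$, and its derivatives at the origin recover the directional derivatives of $f$ at $y^*$. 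Hence it suffices to establish the one-variable inequality $|g^{(n)}(0)|\le n!\,M_R/R^n$ and then pass to the supremum over unit directions $v$.

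For the one-variable statement I would invoke the Cauchy integral formula for the $n$-th derivative. Because $g$ is holomorphic on an open set containing the closed disc of radius $R$, the circle $\gamma_R(\theta)=Re^{i\theta}$, $\theta\in[0,2\pi]$, lies inside the domain of holomorphy, and
\[
g^{(n)}(0)=\frac{n!}{2\pi i}\oint_{\gamma_R}\frac{g(\lambda)}{\lambda^{n+1}}\,d\lambda.
\]
Parametrising $\lambda=Re^{i\theta}$ gives $|d\lambda|=R\,d\theta$ and $|\lambda|=R$ on $\gamma_R$, so the triangle inequality for integrals yields
\[
|g^{(n)}(0)|\le\frac{n!}{2\pi}\int_0^{2\pi}\frac{|g(Re^{i\theta})|}{R^{n+1}}\,R\,d\theta\le\frac{n!}{2\pi}\cdot\frac{M_R}{R^{n}}\cdot 2\pi=\frac{n!\,M_R}{R^{n}},
\]
which is exactly the claimed bound.

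The only point that genuinely requires the stated hypothesis --- and hence the step I would single out as the crux --- is the application of the Cauchy formula on the circle of radius exactly $R$. Integrating over $|\lambda|=R$ is legitimate precisely because $f$ is assumed analytic on a \emph{neighbourhood} of the closed ball, so $g$ extends holomorphically slightly beyond $|\lambda|=R$; were $f$ only analytic on the open ball, the same argument would give the estimate with $R$ replaced by any $r<R$, and one would recover the sharp constant only after letting $r\to R^-$. With the neighbourhood hypothesis in hand this limiting step is unnecessary and the bound follows directly.
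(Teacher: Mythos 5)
Your proof is correct, and there is nothing in the paper to compare it against: the paper states this lemma without proof, as a recalled classical fact, so you have supplied the standard textbook argument that the authors omit. The reduction to one complex variable along a unit direction, the Cauchy integral formula on the circle of radius $R$, and the length-times-supremum bound are exactly the canonical derivation, and your closing observation about why analyticity on a neighbourhood of the \emph{closed} ball permits integrating at radius exactly $R$ (rather than at $r<R$ followed by $r\to R^-$) is accurate. Two remarks on scope, since the lemma as stated is loose about norms. First, the functions to which the paper applies this estimate are vector-valued; the Cauchy formula applies componentwise, so your argument carries over verbatim. Second, your slicing argument bounds the diagonal directional derivatives $D^nf(y^*)[v,\dots,v]$; if one insisted on reading $f^{(n)}(y^*)$ as the full symmetric $n$-linear map, passing from the diagonal to the full norm would require polarization and would degrade the constant. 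This does not matter for the paper's purposes: every invocation of the lemma is either on a one-parameter slice, such as $v(\theta)=V_l(y+\theta\Delta y)$ in the proof of Lemma \ref{lm-dj} --- precisely the form your reduction produces --- or on mixed partial derivatives with respect to the scalar parameters $h,\Delta_{1,1},\dots,\Delta_{1,d}$, as in Lemmas \ref{lm-dj} and \ref{lm-fjN}; the latter follows by iterating your one-variable formula in each variable separately over a polydisc, giving $\|\partial^{\alpha}F(0)\|\le \alpha!\,M/r^{|\alpha|}$, which is the form the paper actually uses. So your proof, together with this routine iteration, covers everything the paper needs from the lemma.
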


 \begin{lemma}(estimate for $\Delta_{n+1,l}$)\label{lm-h1}
	Let $0<\epsilon<\frac12$ and $k\ge 1$. Then 
	there exists a constant $C=C(\epsilon,k)$ such that 
	\begin{align}\label{h1}
	|\Delta_{n+1,l}|\le h^{\frac12-\epsilon}, \quad \forall~h<C.
	\end{align}
\end{lemma}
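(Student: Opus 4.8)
The plan is to exploit the defining feature of the truncated increments, namely that the cutoff renders the bound \eqref{h1} completely deterministic, so that no probabilistic estimate is required. First I would observe that, by the definition of $\zeta_{n+1,l}$ accompanying \eqref{Delta}, we have $|\zeta_{n+1,l}|\le A_h$ pointwise on $\Omega$: if $|\xi_{n+1,l}|\le A_h$ then $\zeta_{n+1,l}=\xi_{n+1,l}$, and otherwise $\zeta_{n+1,l}=\pm A_h$, so in every case $|\zeta_{n+1,l}|\le A_h=\sqrt{k|\ln h|}$. Consequently
\begin{align*}
|\Delta_{n+1,l}|=|\zeta_{n+1,l}|\sqrt{h}\le A_h\sqrt{h}=\sqrt{kh|\ln h|},
\end{align*}
a bound valid for every sample point and every pair of indices $n,l$.

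Next I would reduce the desired inequality $|\Delta_{n+1,l}|\le h^{\frac12-\epsilon}$ to an elementary comparison. Since both sides of $\sqrt{kh|\ln h|}\le h^{\frac12-\epsilon}$ are positive for $0<h<1$, squaring and dividing by $h$ shows that it is equivalent to $k|\ln h|\le h^{-2\epsilon}$, that is, to $kh^{2\epsilon}|\ln h|\le 1$. The one analytic fact needed is that the logarithm is dominated by any positive power near the origin, so that $\lim_{h\to 0^+}h^{2\epsilon}|\ln h|=0$ for every $\epsilon>0$. Hence $kh^{2\epsilon}|\ln h|\to 0$ as $h\to 0^+$, and in particular there exists a threshold $C=C(\epsilon,k)>0$ with $kh^{2\epsilon}|\ln h|\le 1$ for all $0<h<C$, which is exactly \eqref{h1}.

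I do not expect a genuine obstacle here: once the truncation is used to pass to the deterministic bound $\sqrt{kh|\ln h|}$, the remainder is just the standard statement that a power beats a logarithm, and $C$ is precisely the threshold furnished by that limit. The only point deserving a word of care is that $C$ must be chosen small enough that $kh^{2\epsilon}|\ln h|\le 1$ holds on the \emph{entire} interval $(0,C)$, not merely along some sequence; this is immediate from the existence of the limit $\lim_{h\to 0^+}h^{2\epsilon}|\ln h|=0$, which ensures that the inequality persists for all sufficiently small $h$.
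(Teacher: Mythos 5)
Your proof is correct and follows essentially the same route as the paper: both use the truncation to get the deterministic bound $|\zeta_{n+1,l}|\le A_h=\sqrt{k|\ln h|}$ and then reduce \eqref{h1} to the elementary comparison $k|\ln h|\le h^{-2\epsilon}$ for small $h$. The only cosmetic difference is how that comparison is verified --- the paper runs a monotonicity analysis of the auxiliary function $v_1(h)=k\ln h+h^{-2\epsilon}$, while you invoke the standard limit $\lim_{h\to 0^+}h^{2\epsilon}|\ln h|=0$, which is arguably cleaner but not a different idea.
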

\begin{proof}
	Consider the function $v_1(h)=k\ln h +h^{-2\epsilon}$. Then $v'_1(h)=\frac{k}{h}-\frac{2\epsilon}{h^{2\epsilon+1}}$ satisfies $v'_1<0$ as $h\rightarrow0$, and $v'_1>0$ as $h\rightarrow\infty$. Moreover, $v'_1(h)=0$ with $h=\left(\frac{2\epsilon}{k}\right)^{\frac{1}{2\epsilon}}$. Combining the fact that $v_1(h)\ge 0$ as $h\rightarrow 0$, we obtain that there exists a constant $C=C(\epsilon,k)$ such that 
	\begin{align*}
	|\zeta_{n+1,l}|\le h^{-\epsilon}, \quad \forall~h<C,
	\end{align*}
	which implies \eqref{h1}.
\end{proof}

  \begin{lemma}(estimate for $d_\alpha$) \label{lm-dj}
  	Denote $\kappa := \max _{i=1,\cdots,s}\left\{\sum_{j=1}^{s}|a_{ij}|\right\}$ and $\mu := \sum_{i=1}^{s}|b_i|$. Under assumptions as in Theorem \ref{tm-local}, if 
  	\begin{align}\label{ap-hDelta}
  	\max\{h,|\Delta_{1,1}|,\cdots,|\Delta_{1,d}|\}<\frac{R}{2\kappa M (d+1)\sqrt{s}},
  	\end{align}
  	then it holds that
 	\begin{align*}
 	\|d_\alpha(y)\|\le\mu (d+1) M \left[\frac{2\kappa M (d+1)\sqrt{s}}{R}\right]^{|\alpha|-1},\quad \forall~y\in B_R(z),
 	\end{align*}
 	where the coefficient $d_\alpha$ is defined by the expansion
 	\begin{align*}
 	Y^h_1(z)=z+\sum_{|\alpha|=1}^{\infty}d_\alpha(z)h^{\alpha_0}\Delta_{1,1}^{\alpha_1}\cdots\Delta_{1,d}^{\alpha_d}, \quad \alpha=(\alpha_0,\cdots,\alpha_d)\in\mathbb{N}^{d+1}.
 	\end{align*}
 \end{lemma}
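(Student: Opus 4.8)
The plan is to complexify the increments and view the one-step map as an analytic function of the parameters $(h,\Delta_{1,1},\dots,\Delta_{1,d})$, so that each $d_\alpha$ becomes a Taylor coefficient governed by a Cauchy estimate. Fix $y\in B_R(z)$. Since $\|y-z\|\le R$, the ball $B_R(y)$ is contained in $B_{2R}(z)$, so every $V_l$ is analytic and bounded by $M$ on $B_R(y)$, and it suffices to run the argument with base point $y$. I would treat $w=(w_0,w_1,\dots,w_d)\in\mathbb{C}^{d+1}$ as a free complex variable (with $w_0$ playing the role of $h$ and $w_l$ that of $\Delta_{1,l}$) restricted to the polydisc $\{w:\ |w_l|<r,\ 0\le l\le d\}$, where $r:=R/(2\kappa M(d+1)\sqrt{s})$ is exactly the threshold in \eqref{ap-hDelta}, and show that the Runge--Kutta solution $Y^h_1(y)$ of \eqref{TrunRK} extends to an analytic $\mathbb{C}^m$-valued map there.

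The first main step is to realize the internal stages as analytic functions of $w$. Writing the stage system \eqref{TrunRK} as a fixed-point equation $\mathbf{Y}=\Psi_w(\mathbf{Y})$ on $(\mathbb{C}^m)^s$ with the Euclidean product norm, I would run the Picard iteration from the constant iterate $(y,\dots,y)$. Each iterate is a polynomial in $w$, hence analytic; the bound $\|V_l\|\le M$ shows that $\Psi_w$ maps the product ball of radius $R$ about $(y,\dots,y)$ into itself once $\sqrt{s}\,\kappa M(d+1)\max_{0\le l\le d}|w_l|<R$, and the Cauchy estimate recalled above bounds $\|DV_l\|$ on the subball reached by the iterates by a fixed multiple of $M/R$. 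The strict inequality in \eqref{ap-hDelta} then forces the Lipschitz constant of $\Psi_w$ to be strictly below one, so the iterates converge uniformly on each closed subpolydisc $\{|w_l|\le r'\}$ with $r'<r$ to the unique fixed point $\mathbf{Y}(w)$, which is analytic by the Weierstrass limit theorem. Consequently $Y^h_1(y)=y+\sum_{i}b_i\big(V_0(Y^h_{1,i})w_0+\sum_{l=1}^{d}V_l(Y^h_{1,i})w_l\big)$ is analytic in $w$ on $\{|w_l|<r\}$.

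The second step extracts the coefficients. On each subpolydisc $\{|w_l|\le r'\}$ the crude bound $\|Y^h_1(y)-y\|\le\mu M(d+1)\max_{0\le l\le d}|w_l|\le\mu M(d+1)r'$ holds, using only $\|V_l\|\le M$ and $\mu=\sum_i|b_i|$. Applying the Cauchy estimate coordinatewise (its $(d+1)$-variable form) to the analytic function $w\mapsto Y^h_1(y)-y=\sum_{|\alpha|\ge1}d_\alpha(y)w^{\alpha}$, which vanishes at $w=0$, gives $\|d_\alpha(y)\|\le\mu M(d+1)r'\cdot (r')^{-|\alpha|}=\mu M(d+1)(r')^{1-|\alpha|}$. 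Letting $r'\uparrow r$, which is legitimate since $1-|\alpha|\le0$, yields $\|d_\alpha(y)\|\le\mu M(d+1)r^{1-|\alpha|}$; as $1/r=2\kappa M(d+1)\sqrt{s}/R$, this is exactly the asserted bound.

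I expect the principal difficulty to be the constant bookkeeping that makes a single threshold $r$ serve three purposes simultaneously: that $\Psi_w$ be a self-map of the product ball, that it be a strict contraction, and that the iterates remain in a subball of $B_{2R}(z)$ on which the Cauchy bounds for $V_l$ and $DV_l$ hold. The factor $2\sqrt{s}$ in \eqref{ap-hDelta} is precisely what reconciles these requirements—$\sqrt{s}$ from passing to the Euclidean norm on the $s$ stages, and the factor $2$ from leaving room for both self-mapping and contraction—so the delicate point is to push the estimates through without spoiling the clean geometric dependence $r^{1-|\alpha|}$ on which the final bound rests.
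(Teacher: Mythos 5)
Your proposal is correct and follows essentially the same route as the paper's proof: a contraction/fixed-point argument for the Runge--Kutta stages on a ball around $y$ (with the derivative bound $\|V_l'\|\le 2M/R$ on $B_{3R/2}(z)$ obtained from Cauchy's estimate), followed by Cauchy's estimate in the parameters $(h,\Delta_{1,1},\dots,\Delta_{1,d})$ applied to the crude bound $\|Y^h_1(y)-y\|\le\mu M(d+1)\max_l|w_l|$, and a limiting argument in the radius (your $r'\uparrow r$ is the paper's $\gamma\to 1$). The only difference is cosmetic: you justify analyticity of the fixed point in $w$ explicitly via Picard iterates and the Weierstrass convergence theorem, a point the paper uses implicitly without comment.
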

 
 \begin{proof}
For any $y\in B_{\frac32 R}(z)$ and $\|\Delta y\|\le 1$, define $v(\theta):=V_l(y+\theta\Delta y)$, $|\theta|\le \frac{R}{2}$. Then Cauchy's estimate shows
\begin{align*}
\|V'_l(y)\Delta y\|=\left\|v'(\theta)\big|_{\theta=0}\right\|\le \frac{M}{\frac{R}{2}}=\frac{2M}{R},
\end{align*}
which implies
\begin{align}\label{V'}
\|V'_l(y)\|=\sup_{\|\Delta y\|\le 1}\|V'_l(y)\Delta y\|\le \frac{2M}{R}, \quad \forall~y\in B_{\frac32 R}(z).
\end{align}

For any $y\in B_{ R}(z)$, define a map $F:\mathbb{C}^{m\times s}\rightarrow \mathbb{C}^{m\times s}$ by
\begin{align*}
F:g&=(g_1,\cdots,g_s)\mapsto F(g)=(F(g)_1,\cdots,F(g)_s),\\
F(g)_i &=y  +\sum^{s}_{j=1}a_{ij}\left[V_0(g_j)h+\sum_{l=1}^{d}V_l(g_j)\Delta_{1,l}\right],\quad i=1,\cdots,s.
\end{align*}
We claim that $F$ is a contraction on the closed set $\mathbb{B}:=\left\{(g_1,\cdots,g_s):\|g_i-y\|\le \frac{R}{2}, i=1,\cdots,s\right\}$.
Indeed, for any $0<\gamma<1$ and 
\begin{align*}
\max\{h,|\Delta_{1,1}|,\cdots,|\Delta_{1,d}|\}\le\frac{\gamma R}{2\kappa M (d+1)\sqrt{s}}=:C_1(\gamma),
\end{align*}
we have
\begin{align*}
\|F(g)_i-y\|\le\sum^{s}_{j=1}|a_{ij}|M\left[h+\sum_{l=1}^{d}|\Delta_{1,l}|\right]<\frac{R}{2}, \quad \forall~g\in \mathbb{B}.
\end{align*}
Besides, \eqref{V'} yields
\begin{align*}
\|F(g)_i-F(\tilde{g})_i\|\le \sum^{s}_{j=1}|a_{ij}|\frac{2M}{R}\left[h+\sum_{l=1}^{d}|\Delta_{1,l}|\right]\|g-\tilde{g}\|
\le\frac{\gamma}{\sqrt{s}}\|g-\tilde{g}\|, \quad \forall~g,\tilde{g}\in \mathbb{B},
\end{align*}
which leads to $\|F(g)-F(\tilde{g})\|\le \gamma\|g-\tilde{g}\|$. Therefore, there exists a unique fixed point $g^*$ for $F$ on the set $\mathbb{B}$.
Since $g^*_i\in B_{\frac{3R}{2}}(z)$ and $Y^h_1(y)=y+\sum^{s}_{i=1}b_{i}\left(V_0(g^*_i)h+\sum_{l=1}^{d}V_l(g^*_i)\Delta_{1,l}\right)$,
the boundedness of $V$ deduces
\begin{align*}
\|Y^h_1(y)-y\|\le \mu (d+1)MC_1(\gamma).
\end{align*}
Repeatly applying Cauchy's estimate, we have
\begin{align*}
\|d_\alpha(y)\|&=\left\|\frac{1}{\alpha_0!\cdots \alpha_d!}\left[\frac{d^{\alpha_d}}{d\Delta_{1,d}^{\alpha_d}}\cdots\left[\frac{d^{\alpha_0}
	}{dh^{\alpha_0}}\left( \Phi_h(y) -y\right)\right]\Big\rvert _{h=0}\cdots\right]\bigg\rvert _{\Delta_{1,d}=0}\right\|\\
&\le \frac{\mu (d+1) M C_1(\gamma)}{C_1^{|\alpha|}(\gamma)}=\mu (d+1) M \left[\frac{2\kappa M (d+1)\sqrt{s}}{\gamma R}\right]^{|\alpha|-1}.
\end{align*}
Letting $\gamma\rightarrow 1$, we obtain 
\begin{align*}
\|d_\alpha(y)\|\le\mu (d+1) M \left[\frac{2\kappa M (d+1)\sqrt{s}}{R}\right]^{|\alpha|-1}, \quad  \forall ~ y\in B_{ R}(z).
\end{align*}
\end{proof}

\begin{remark}
Let $\epsilon=\frac14$ and $k=4$. Lemma \ref{lm-h1} shows that condition \eqref{ap-hDelta} holds if we simulate the random variable $\Delta_{1,l}$ in \eqref{Delta} by taking
\begin{align}\label{h2}
h< \min\left\{C(\epsilon,k),\left[ \frac{ R}{2\kappa M (d+1)\sqrt{s}} \right]^4\right\}.
\end{align}
\end{remark}

\begin{lemma}(estimate for $f_\alpha$)\label{lm-fj}
Denote $\eta:=2\max\{\kappa,\mu/(2\ln 2 -1)\}$. Under assumptions as in Theorem \ref{tm-local} and Lemmas \ref{lm-h1}-\ref{lm-dj}, then the coefficients of the associated stochastic modified equation \eqref{Delmod} satisfy
\begin{align*}
\sum_{|\alpha|=J}\|f_\alpha(y)\|\le (\ln 2) \eta M (d+1)^2 \sqrt{s}\left(\frac{\eta M(d+1)^2 \sqrt{s}J}{R}\right)^{J-1},\quad \forall ~y\in B_{ \frac12 R}(z),~ J\in\mathbb{N}_+.
\end{align*}
\end{lemma}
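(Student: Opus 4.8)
The plan is to prove the estimate by strong induction on $J$, reading off the growth of $\sum_{|\alpha|=J}\|f_\alpha\|$ directly from the recursion \eqref{confj}. Since the $V_l$ are analytic on $B_{2R}(z)$, an inspection of \eqref{confj}, which produces $f_\alpha$ from the $d_\alpha$ and from finitely many derivatives and products of lower-order $f_{\alpha'}$, shows that every $f_\alpha$ is analytic on $B_R(z)$; hence Cauchy's estimate is available to convert the derivative norms that appear into sup-norms on slightly larger balls. The base case $J=1$ is immediate: there $f_\alpha=d_\alpha$ for the $d+1$ indices with $|\alpha|=1$, so $\sum_{|\alpha|=1}\|f_\alpha\|\le(d+1)\cdot\mu(d+1)M=\mu(d+1)^2M$ by Lemma \ref{lm-dj}, and the defining inequality $\eta\ge 2\mu/(2\ln2-1)$ together with $\sqrt s\ge1$ guarantees $\mu(d+1)^2M\le(\ln2)\eta M(d+1)^2\sqrt s$, which is exactly the claim for $J=1$.

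For the inductive step I would first rewrite the recursive term. Summing \eqref{confj} over all $\alpha$ with $|\alpha|=J$ turns the inner sum over $O_i$ into a sum over ordered $i$-tuples $(k^1,\dots,k^i)$ of nonzero multi-indices with $|k^1|+\cdots+|k^i|=J$; since $i\ge2$, each $|k^j|\le J-1$, so only strictly lower levels enter and the induction is well founded. Each elementary differential $D_{k^1}\cdots D_{k^{i-1}}f_{k^i}$, where $D_kg=g'f_k$, expands by the product rule into a sum of products $\prod_{j=1}^{i}f_{k^j}^{(n_j)}$ with $\sum_j n_j=i-1$; the number of such products is at most $(i-1)!$, and every factor $f_{k^j}^{(n_j)}$ is controlled through $\sup\|f_{k^j}\|$ by Cauchy's estimate, which contributes $n_j!$ against the prefactor $1/i!$ already present in \eqref{confj}. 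The decisive cancellation is $(i-1)!/i!=1/i$, which is what ultimately produces the logarithmic constants.

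To make the Cauchy step compatible with the induction I would carry the estimate on the family of balls $B_\rho(z)$, $R/2\le\rho<R$, using the distance to the reference ball $B_R(z)$, on which Lemma \ref{lm-dj} controls the $d_\alpha$, as the Cauchy radius; this removes the apparent circularity of bounding derivatives of $f_{k^j}$ on the same ball where $f_{k^j}$ is being estimated. Assembling the pieces, namely Lemma \ref{lm-dj} for the $d_\alpha$-term, the induction hypothesis for the $f_{k^j}$, the Cauchy factorials against $1/i!$, and the multinomial sum over the partitions $|k^1|+\cdots+|k^i|=J$, reduces the closure of the induction to two ingredients: an Abel-type combinatorial identity that converts $\sum_{K_1+\cdots+K_i=J}\prod_j K_j^{K_j-1}$ into a multiple of $J^{J-1}$ (this is the source of the $(\eta M(d+1)^2\sqrt s\,J/R)^{J-1}$ factor), and the summation of the series $\sum_{i\ge1}\frac1i 2^{-i}=\ln2$ and $\sum_{i\ge2}\frac1i 2^{-(i-1)}=2\ln2-1$. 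The choice $\eta=2\max\{\kappa,\mu/(2\ln2-1)\}$ is calibrated precisely so that the contributions of $\kappa$ (from the composition terms) and of $\mu$ (from $d_\alpha$), weighted by these two series, recombine into the single prefactor $(\ln2)\eta M(d+1)^2\sqrt s$; specializing to $\rho=R/2$ then yields the stated bound.

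The main obstacle is this last assembly. The delicate points are the correct accounting of the elementary-differential combinatorics (the count of product terms and the distribution of the $i-1$ derivatives among the $i$ factors) and the verification that the Abel identity together with the accumulated Cauchy factorials produces exactly the moderate growth $J^{J-1}$ rather than a genuine factorial, while the numerical constants align to give precisely $\ln2$ and $\eta$. In other words, the technical heart is matching, term by term against the recursion, the two series identities $\sum_{i\ge1}\frac1i 2^{-i}=\ln2$ and $\sum_{i\ge2}\frac1i 2^{-(i-1)}=2\ln2-1$ that pin down the definition of $\eta$; everything else is the routine bookkeeping of Cauchy's estimate and Lemma \ref{lm-dj}.
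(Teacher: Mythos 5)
Your skeleton (induction through the recursion \eqref{confj}, Cauchy estimates on shrinking balls, the base case $J=1$) starts in the right direction, and the base case is fine. But there is a genuine gap exactly at the point you yourself defer as ``the technical heart,'' and it is not just unfinished bookkeeping: the factorial accounting is wrong. If you expand $D_{k^{i,1}}\cdots D_{k^{i,i-1}}f_{k^{i,i}}$ by the product rule into elementary differentials and then control each factor $f_{k^{i,j}}^{(n_j)}$ by Cauchy's estimate, you pay $\prod_j n_j!$ \emph{per term}, on top of counting the $(i-1)!$ terms; the relevant quantity is the weighted count $\sum_{\text{terms}}\prod_j n_j!$, which equals $(2i-3)!!$ (check: $1$ for $i=2$; $2!+1=3$ for $i=3$; $3!+2+2+2+2+1=15=5!!$ for $i=4$), and $(2i-3)!!/i!\sim 2^{\,i-1}/i^{3/2}$. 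This exceeds your claimed ``decisive cancellation'' $(i-1)!/i!=1/i$ by the exponential factor $2^{\,i-1}$, which halves the radius of the majorant series and makes the stated constants $\eta=2\max\{\kappa,\mu/(2\ln 2-1)\}$ and $\ln 2$ unreachable. Worse, even \emph{without} that excess the induction as you calibrated it does not close: inserting the hypothesis $F_K\le A(BK)^{K-1}$ with $A=(\ln 2)\eta M(d+1)^2\sqrt{s}$, $B=\eta M(d+1)^2\sqrt{s}/R$ into the composition sum, and using the Lagrange-inversion bound $\sum_{K_1+\cdots+K_i=J}\prod_j K_j^{K_j-1}\le i\,J^{J-i}$ together with $\delta^{-1}=2(J-1)/R$, gives at best $\sum_{i\ge 2}\frac{(2\ln 2)^{i-1}}{(i-1)!}\,A(BJ)^{J-1}=(e^{2\ln 2}-1)A(BJ)^{J-1}=3A(BJ)^{J-1}$, overshooting the target $A(BJ)^{J-1}$ threefold before the $d_\alpha$-term is even added.

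The paper's proof avoids both problems, and its constants arise from a different mechanism than the series identities you invoke. First, it never expands the nested Lie derivatives: following \cite[Lemma 7.4 in Chap.~IX]{GeometricEH}, it estimates them one operator at a time on the telescoping family of balls $B_{R-(j-1)\delta}(z)$, $\delta=R/(2(J-1))$, so that only \emph{first} derivatives of the accumulated inner function ever appear; this yields $\|D_{k^{i,1}}\cdots D_{k^{i,i-1}}f_{k^{i,i}}\|\le\delta^{-(i-1)}\prod_j\|f_{k^{i,j}}\|$ with constant one and no factorials at all. Second, instead of a term-by-term induction closed by Abel identities, it sums over levels ($F_{\tilde\alpha}=\sum_{|\alpha|=\tilde\alpha}\|f_\alpha\|$, $G_{\tilde\alpha}=\sum_{|\alpha|=\tilde\alpha}\|d_\alpha\|$), majorizes the resulting recursion by a sequence $\beta_{\tilde\alpha}$ whose generating function solves $b=\frac{c_1\xi}{1-c_2\xi}+e^{b}-1-b$, and applies Cauchy's estimate to this single analytic function of $\xi$. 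The numbers $\ln 2$ and $2\ln 2-1$ are the critical data of the scalar map $b\mapsto 2b+1-e^{b}$ (critical point $e^{b}=2$, critical value $2\ln 2-1$), which fix the bound $|b|\le\ln 2$ and the admissible radius $(2\ln 2-1)/(c_1+c_2(2\ln 2-1))$; they are not produced by $\sum_{i\ge 1}\frac{1}{i}2^{-i}=\ln 2$. Likewise the growth $J^{J-1}$ comes from $\delta^{-1}\approx 2J/R$ entering $c_1$, not from an Abel/tree identity. To repair your argument you would have to abandon the elementary-differential expansion in favor of the one-derivative-at-a-time estimate, and replace the deferred combinatorial assembly by the generating-function step (or accept strictly weaker constants than the lemma asserts).
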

\begin{proof}
	For $J=1$, it follows from Lemma \ref{lm-dj} that 
	\begin{align*}
	\sum_{|\alpha|=1}\|f_\alpha(y)\|\le \mu M(d+1)^2  \le (\ln 2) \eta M (d+1)^2 \sqrt{s}, \quad \forall ~y\in B_{ \frac12 R}(z).
	\end{align*}
	
	For $J\ge 2$, we consider $\alpha\in\mathbb{N}^{d+1}$ such that $1\le |\alpha|\le J$.
Define 
\begin{align*}
\|f\|_{|\alpha|}:= \max\left\{\|f(y)\|:y\in B_{R-(|\alpha|-1)\delta}(z),~\delta=\frac{R}{2(J-1)}\right\}.
\end{align*}
For $|k^{i,1}|+\cdots+|k^{i,i}|=|\alpha|$, $i=1,\cdots,|\alpha|$, we use \cite[Lemma 7.4 in Chap.  \uppercase\expandafter{\romannumeral9}]{GeometricEH} to get
\begin{align*}
\|D_{k^{i,1}}\cdots D_{k^{i,i-1}}f_{k^{i,i}}\|_{|\alpha|}&\le \frac{1}{\delta}\|f_{k^{i,1}}\|_{|\alpha|}\|D_{k^{i,2}}\cdots D_{k^{i,i-1}}f_{k^{i,i}}\|_{|\alpha|-1}\\
&\le \frac{1}{\delta^2}\|f_{k^{i,1}}\|_{|\alpha|}\|f_{k^{i,1}}\|_{|\alpha|-1}\|D_{k^{i,3}}\cdots D_{k^{i,i-1}}f_{k^{i,i}}\|_{|\alpha|-2}\\
&\le \cdots\\
&\le \frac{1}{\delta^{i-1}}\|f_{k^{i,1}}\|_{|\alpha|}\|f_{k^{i,1}}\|_{|\alpha|-1}\cdots\| f_{k^{i,i}}\|_{|\alpha|-(i-1)}\\
&\le \frac{1}{\delta^{i-1}}\|f_{k^{i,1}}\|_{|k^{i,1}|}\cdots\|f_{k^{i,i}}\|_{|k^{i,i}|}.
\end{align*}
Combining with \eqref{confj}, we have
\begin{align*}
\|f_\alpha\|_{|\alpha|}\le \|d_\alpha\|_{|\alpha|}+\sum_{i=2}^{|\alpha|}\frac{1}{i!}\sum_{(k^{i,1},\cdots,k^{i,i})\in O_i}\frac{1}{\delta^{i-1}}\|f_{k^{i,1}}\|_{|k^{i,1}|}\cdots\|f_{k^{i,i}}\|_{|k^{i,i}|}.
\end{align*}
We introduce the notation  $F_{\tilde{\alpha}}:=\sum_{|\alpha|=\tilde{\alpha}}\|f_\alpha\|_{|\alpha|}$ and $G_{\tilde{\alpha}}:=\sum_{|\alpha|=\tilde{\alpha}}\|d_\alpha\|_{|\alpha|}$, thus the above inequality yields
\begin{align}\label{betaj}
F_{\tilde{\alpha}}\le G_{\tilde{\alpha}} + \sum_{i=2}^{\tilde{\alpha}}\frac{1}{i!}\sum_{\tilde{k}^{i,1}+\cdots+\tilde{k}^{i,i}=\tilde{\alpha}}\frac{1}{\delta^{i-1}}
F_{\tilde{k}^{i,1}}\cdots F_{\tilde{k}^{i,i}}.
\end{align}

Notice that Lemma \ref{lm-dj} implies 
\begin{align*}
G_{\tilde{\alpha}}&\le \frac{(d+\tilde{\alpha})!}{d!\tilde{\alpha}!} \mu (d+1) M \left[\frac{2\kappa M (d+1)\sqrt{s}}{R}\right]^{\tilde{\alpha}-1}\\
&\le (d+1)^{\tilde{\alpha}} \mu (d+1) M \sqrt{s}\left[\frac{2\kappa M (d+1)\sqrt{s}}{R}\right]^{\tilde{\alpha}-1}\\
&=\mu M(d+1)^2\sqrt{s}\left[\frac{2\kappa M(d+1)^2\sqrt{s}}{R}\right]^{\tilde{\alpha}-1}.
\end{align*}
We let 
\begin{align*}
\beta _{\tilde{\alpha}}:=\frac{\mu M(d+1)^2\sqrt{s}}{\delta}\left(\frac{2\kappa M(d+1)^2\sqrt{s}}{R}\right)^{\tilde{\alpha}-1}+\sum_{i=2}^{\tilde{\alpha}}\frac{1}{i!}\sum_{\tilde{k}^{i,1}+\cdots+\tilde{k}^{i,i}=\tilde{\alpha}}
\beta_{\tilde{k}^{i,1}}\cdots \beta_{\tilde{k}^{i,i}}, 
\end{align*}
which is defined for all $\tilde{\alpha}\in\mathbb{N}_+$.
According to \eqref{betaj}, we know that $F_{\tilde{\alpha}}\le \delta \beta _{\tilde{\alpha}}$ if $\tilde{\alpha}\le J$. In order to estimate $F_{J}$,  it suffices to estimate $\beta _{J}$.
Let $c_1:=\frac{\mu M(d+1)^2\sqrt{s}}{\delta}$, $c_2:=\frac{2\kappa M(d+1)^2\sqrt{s}}{R}$ and 
\begin{align}\label{b}
b(\xi):=\sum_{\tilde{\alpha}=1}^{\infty}\beta_{\tilde{\alpha}}\xi^{\tilde{\alpha}}.
\end{align}
To apply Cauchy's estimate for analyzing $\beta_{J}$, we give a bound on $b(\xi)$.
Notice that the definition of 
$\beta _{\tilde{\alpha}}$ leads to 
\begin{align*}
b(\xi)=\frac{c_1 \xi}{ 1-c_2\xi}+e^{b(\xi)}-1-b(\xi).
\end{align*}
Consider the function 
\begin{align*}
q(b,\xi)=\frac{c_1 \xi}{ 1-c_2\xi}+e^{b}-1-2b=0.
\end{align*}
If $\frac{\partial q(b,\xi)}{\partial b}=e^b-2\neq 0$ (i.e., $b\neq \ln 2+2k\pi i$), the implicit function theorem is applicable, and indeed, the relationship between $b$ and $\xi$ is exactly given \eqref{b}. Since $c_1,c_2>0$, we know that for any $\xi\in\mathbb{R}$ such that $\xi\in[0,(2\ln2-1)/(c_1+c_2(2\ln2-1)))$, the function $\frac{c_1 \xi}{ 1-c_2\xi}\in[0,2\ln 2-1)$ is increasing with respect to $\xi$. Meanwhile, for any $b\in\mathbb{R}$ such that $b\in[0,\ln 2)$, the function $-e^{b}+1+2b\in[0,2\ln 2-1)$ is increasing with respect to $b$. Then we have $b(\xi)\in[0,\ln 2]$ if $\xi\in[0,(2\ln2-1)/(c_1+c_2(2\ln2-1)))$. Combining with $\beta_{\tilde{\alpha}}>0$, we obtain 
\begin{align*}
|b|\le \sum_{\tilde{\alpha}=1}^{\infty}\beta_{\tilde{\alpha}}|\xi|^{\tilde{\alpha}}\le \ln 2, \quad \forall~ |\xi|< (2\ln2-1)/(c_1+c_2(2\ln2-1)).
\end{align*}
Since Cauchy's estimate is applicable, we derive
\begin{align*}
|\beta_{\tilde{\alpha}}|\le \frac{\ln 2}{((2\ln2-1)/(c_1+c_2(2\ln2-1)))^{\tilde{\alpha}}}, \quad \tilde{\alpha}\in\mathbb{N}_+,
\end{align*}
and then
\begin{align*}
F_J\le \delta \beta_J \le\frac{R}{2(J-1)}\frac{\ln 2}{((2\ln2-1)/(c_1+c_2(2\ln2-1)))^{J}}\le
 \frac{(\ln 2)R}{2(J-1)}\left(\frac{\eta M(d+1)^2\sqrt{s} J}{R}\right)^J.
\end{align*}
Therefore, 
\begin{align*}
\sum_{|\alpha|=J}\|f_\alpha(y)\|\le (\ln 2) \eta M (d+1)^2 \sqrt{s}\left(\frac{\eta M(d+1)^2 \sqrt{s}J}{R}\right)^{J-1},\quad \forall ~y\in B_{ \frac12 R}(z).
\end{align*}
\end{proof}

In order to estimate the exact solution of the $\tilde{N}$-truncated modified equation \eqref{TME1}, 
we consider the infinite expansion for its solution with respect to the initial value $z$:
\begin{align*}
\tilde{y}^{\tilde{N}}_{t_1}=z+\sum_{|\alpha|=1}^{\infty}f^{\tilde{N}}_\alpha(z)h^{\alpha_0}\Delta^{\alpha_1}_{1,1}\cdots\Delta^{\alpha_d}_{1,d}.
\end{align*}

\begin{lemma}(estimate for $f^{\tilde{N}}_\alpha$)\label{lm-fjN}
	Let $0<\epsilon<\frac12$. Under assumptions as in Theorem \ref{tm-local} and Lemmas \ref{lm-h1}-\ref{lm-fj}, if the truncation number $\tilde{N}$ satisfies
	\begin{align}\label{N}
	1\le\tilde{N}\le \frac{R}{\eta M(d+1)^2\sqrt{s}h^{\frac12-\epsilon}},
	\end{align}
	then there exists a constant $C$ such that 
	\begin{align*}
	\|f^{\tilde{N}}_\alpha(z)\|\le  \frac{ (\ln 2) \eta M (d+1)^2\sqrt{s}C}{\left[\frac{R}{2(\ln 2)\eta M (d+1)^2\sqrt{s}C}\right]^{(\frac{1}{1/2-\epsilon})|\alpha|-1}}.
	\end{align*}
\end{lemma}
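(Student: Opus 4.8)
The plan is to read $f^{\tilde{N}}_\alpha$ as the Taylor coefficients, in the variables $h$ and $\Delta_{1,1},\dots,\Delta_{1,d}$, of the time-$h$ flow of the \emph{truncated} modified vector field in \eqref{TME1}, and to estimate them by the same majorant/generating-function machinery that proves Lemma \ref{lm-fj}. First I would expand this flow exactly as in the construction of subsection \ref{sec3.1}: applying Taylor's formula and the chain rule to $\tilde{y}^{\tilde{N}}$ yields the representation
\begin{align*}
f^{\tilde{N}}_\alpha=\sum_{i=1}^{|\alpha|}\frac{1}{i!}\sum_{\substack{(k^{i,1},\dots,k^{i,i})\in O_i\\ |k^{i,j}|\le\tilde{N}}}\big(D_{k^{i,1}}\cdots D_{k^{i,i-1}}f_{k^{i,i}}\big),
\end{align*}
which is identical to \eqref{expyn} except that every composition factor is now constrained to order $|k^{i,j}|\le\tilde{N}$, since the truncated vector field carries only the $f_k$ with $|k|\le\tilde{N}$. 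In particular $f^{\tilde{N}}_\alpha=d_\alpha$ whenever $|\alpha|\le\tilde{N}$ (the constraint is then vacuous and \eqref{confj} gives back $\tilde{f}_\alpha=d_\alpha$), so only $|\alpha|>\tilde{N}$ requires new work, and for such $\alpha$ the number of factors is forced to satisfy $i\ge\lceil|\alpha|/\tilde{N}\rceil$.

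Next I would bound the composite derivative operators exactly as in Lemma \ref{lm-fj}, using the composition estimate \cite[Lemma 7.4, Chap. IX]{GeometricEH} on the nested balls $B_{R-(|\alpha|-1)\delta}(z)$ to get $\|D_{k^{i,1}}\cdots D_{k^{i,i-1}}f_{k^{i,i}}\|\le\delta^{-(i-1)}\prod_j\|f_{k^{i,j}}\|$, with the $|\alpha|$-dependent shrinkage $\delta=R/(2(|\alpha|-1))$ inherited from that lemma. Summing over multi-indices of fixed order and feeding in the bound of Lemma \ref{lm-fj} for $F_j:=\sum_{|k|=j}\|f_k\|$ reduces everything to a scalar problem: writing $\psi(\xi)=\sum_{j=1}^{\tilde{N}}F_j\xi^j$, one is led to majorize the generating function $\sum_J\big(\sum_{|\alpha|=J}\|f^{\tilde{N}}_\alpha\|\big)\xi^J$ by $\delta\big(\exp(\delta^{-1}\psi(\xi))-1\big)$. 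The decisive structural point, absent in Lemma \ref{lm-fj}, is that $\psi$ is now a \emph{polynomial} of degree $\tilde{N}$, hence entire, so Cauchy's estimate may be applied at \emph{any} radius rather than only inside the fixed disc exploited in the proof of Lemma \ref{lm-fj}; this is what converts the divergent, super-geometric behaviour of the $f_\alpha$ into a genuinely geometric behaviour of the truncated-flow coefficients $f^{\tilde{N}}_\alpha$.

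I would then extract the coefficient of $\xi^{|\alpha|}$ by optimizing the Cauchy radius. Writing $A:=\eta M(d+1)^2\sqrt{s}$, the polynomial $\psi$ is dominated by its top term $F_{\tilde{N}}\xi^{\tilde{N}}$, and $F_{\tilde{N}}$ is controlled by Lemma \ref{lm-fj}; the optimal radius then scales like $R/(A\tilde{N})$, producing a geometric bound in $|\alpha|$ whose ratio is essentially the reciprocal radius of convergence $A\tilde{N}/R$ of the truncated flow. Invoking hypothesis \eqref{N}, namely $\tilde{N}\le R/(A h^{1/2-\epsilon})$, to control this ratio, and choosing the constant $C$ large enough (depending on $R,\eta,M,d,s$) so that the resulting bound simultaneously dominates the $d_\alpha$-estimate of Lemma \ref{lm-dj} on the range $|\alpha|\le\tilde{N}$, recasts the estimate into the stated form with the ratio $2(\ln 2)\eta M(d+1)^2\sqrt{s}\,C/R$ and the exponent $\big(\tfrac{1}{1/2-\epsilon}\big)|\alpha|-1$.

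The hard part will be this last step: carrying out the radius optimization against the degree-$\tilde{N}$ truncated polynomial while keeping the constant uniform in $h$, and correctly translating condition \eqref{N} into the factor $\tfrac{1}{1/2-\epsilon}$ appearing in the exponent. The constrained-partition bookkeeping (every composition factor of order $\le\tilde{N}$, interacting with the $|\alpha|$-dependent $\delta$) is the genuine technical crux, since a naive worst-case partition estimate of $\prod_j\|f_{k^{i,j}}\|$ overshoots badly and must be replaced by the generating-function computation in order to recover the geometric ratio together with the precise constant; verifying that the chosen $C$ indeed makes the single clean geometric bound valid across both regimes $|\alpha|\le\tilde{N}$ and $|\alpha|>\tilde{N}$ is where the estimate is most delicate.
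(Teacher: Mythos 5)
Your route is genuinely different from the paper's, and its crux step has a real gap. The combinatorial part of your plan is sound: the constrained-composition representation of $f^{\tilde{N}}_\alpha$, the nested-ball composition estimate, and the majorization of $\sum_{J}\big(\sum_{|\alpha|=J}\|f^{\tilde{N}}_\alpha\|\big)\xi^{J}$ by $\delta\big(\exp(\delta^{-1}\psi(\xi))-1\big)$ all parallel the proof of Lemma \ref{lm-fj} (modulo the bookkeeping you gloss over: $\delta=R/(2(J-1))$ and the nested-ball norms inside $F_j$ change with the target order $J$, so there is one recursion per $J$, not a single generating function). What fails is the ``decisive structural point.'' Writing $A:=\eta M(d+1)^{2}\sqrt{s}$, the top coefficient of $\psi$ is of size $F_{\tilde{N}}\approx(\ln 2)A(A\tilde{N}/R)^{\tilde{N}-1}$, so $\delta^{-1}\psi(r)$ is $O(J/\tilde{N})$ only for $r\lesssim R/(A\tilde{N})$ and explodes like $(A\tilde{N}r/R)^{\tilde{N}}$ beyond; minimizing $r\mapsto\delta\exp(\delta^{-1}\psi(r))r^{-J}$ gives the stationarity condition $r\psi'(r)=J\delta$, i.e.\ $(A\tilde{N}r/R)^{\tilde{N}}\approx\frac{J}{2\ln 2(J-1)}<1$, so the optimal radius is pinned just below $R/(A\tilde{N})$. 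Polynomiality therefore does \emph{not} buy you an arbitrary radius: the resulting bound is geometric with ratio roughly $2A\tilde{N}/R$ per unit of $|\alpha|$, a $\tilde{N}$-dependent ratio which hypothesis \eqref{N} converts only into $h^{-(1/2-\epsilon)}$, never into the $h$- and $\tilde{N}$-independent ratio $\big[2(\ln 2)AC/R\big]^{1/(1/2-\epsilon)}$ claimed in the Lemma. Nothing in a scalar radius optimization can produce the exponent $\tfrac{1}{1/2-\epsilon}$; that exponent has a different origin altogether.

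In the paper no combinatorial expansion of $f^{\tilde{N}}_\alpha$ is performed at all. The proof has two steps (written for $\epsilon=\tfrac14$): (i) using Lemma \ref{lm-fj}, the bound $|\Delta_{1,l}|\le h^{1/2-\epsilon}$ of Lemma \ref{lm-h1}, and precisely condition \eqref{N} --- which yields $\sum_{J=2}^{\tilde{N}}\big(AJh^{1/2-\epsilon}/R\big)^{J-1}\le\sum_{J=2}^{\tilde{N}}(J/\tilde{N})^{J-1}\le C_{0}$ uniformly in $\tilde{N}$ --- it shows the solution of \eqref{TME1} stays in $B_{R/2}(z)$ with $\|\tilde{y}^{\tilde{N}}_{t}-z\|\le h^{1/2-\epsilon}(\ln 2)AC_{0}$ for $t\le h$; (ii) it then regards $\tilde{y}^{\tilde{N}}_{h}-z$ as an analytic function of the $d+1$ variables $(h,\Delta_{1,1},\dots,\Delta_{1,d})$ and applies the multivariate Cauchy estimate on a polydisc of \emph{fixed} radius $C_{2}=\big[R/(2(\ln 2)AC)\big]^{1/(1/2-\epsilon)}$, so that the quotient of the flow bound $C_{2}^{1/2-\epsilon}(\ln 2)AC_{0}$ by $C_{2}^{|\alpha|}$ is literally the stated estimate; the factor $\tfrac{1}{1/2-\epsilon}$ comes from this anisotropy between the $h$-scale and the $\Delta$-scale of the polydisc, i.e.\ from Lemma \ref{lm-h1}, not from condition \eqref{N}. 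Your instinct that uniformity of the constant is the delicate point is well founded --- even in the paper, extending the a priori bound of step (i), proved for the real $h$ coupled to $\tilde{N}$ through \eqref{N}, to the whole fixed polydisc is the sensitive move --- but the machinery you propose demonstrably stops at the $\tilde{N}$-dependent ratio. A final remark: the weaker bound your approach does deliver, $\sum_{|\alpha|=J}\|f^{\tilde{N}}_\alpha(z)\|\le(\ln 2)A\,(cA\tilde{N}/R)^{J-1}$, would still suffice for Theorem \ref{tm-local}, since there it is always multiplied by $h^{\alpha_0}|\Delta_{1,1}|^{\alpha_1}\cdots|\Delta_{1,d}|^{\alpha_d}\le h^{(1/2-\epsilon)J}$ and \eqref{N} (with a slightly smaller constant) turns the tail into a convergent geometric series with exponentially small sum; but it does not prove the Lemma in the form stated.
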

\begin{proof}
For simplicity, we let $\epsilon=\frac14$, as the proof is similar for $\epsilon\in(0,\frac12)$.
	
According to Lemma \ref{lm-fj}, as long as $\{\tilde{y}^{\tilde{N}}_{t}:t\le t_1=h\}\subset B_{\frac{R}{2}}(z)$, we have the estimate
\begin{align*}
\|\tilde{y}^{\tilde{N}}_{t}-z\|
&\le  \sum_{J=1}^{\tilde{N}}h^{\frac14 J}
(\ln 2) \eta M (d+1)^2 \sqrt{s}\left(\frac{\eta M(d+1)^2 \sqrt{s}J}{R}\right)^{J-1}\\
&\le h^{\frac14} (\ln 2) \eta M (d+1)^2 \sqrt{s} \left(1+   \sum_{J=2}^{\tilde{N}}\left(\frac{\eta M(d+1)^2 \sqrt{s}Jh^{\frac14}}{R}\right)^{J-1} \right), \quad \forall ~t\le h.
\end{align*}
Since $1\le\tilde{N}\le \frac{R}{\eta M(d+1)^2\sqrt{s}h^{\frac14}}$,
we know 
 \begin{align*}
1+   \sum_{J=2}^{\tilde{N}}\left(\frac{\eta M(d+1)^2 \sqrt{s}Jh^{\frac14}}{R}\right)^{J-1}\le 1+   \sum_{J=2}^{\tilde{N}}\left(\frac{J}{\tilde{N}}\right)^{J-1}\le C_0.
 \end{align*}
Then a sufficient condition for $\{\tilde{y}^{\tilde{N}}_{t}:t\le h\}\subset B_{\frac{R}{2}}(z)$ is
\begin{align}\label{h3}
h\le \left(\frac{R}{2(\ln 2) \eta M (d+1)^2\sqrt{s} C_0}\right)^4.
\end{align}
In this case, it has
\begin{align*}
\|\tilde{y}^{\tilde{N}}_{t}-z\|
\le h^{\frac14} (\ln 2) \eta M (d+1)^2 \sqrt{s} C_0, \quad \forall ~t\le h.
\end{align*}

Combining the conditions \eqref{h2} and \eqref{h3} on $h$ together, we obtain that there exists a sufficently large $C$ such that
\begin{align*}
\left[\frac{R}{2(\ln 2)\eta M (d+1)^2\sqrt{s}C}\right]^4\le\min \left\{C(\epsilon,k),\left[\frac{\gamma R}{2\kappa M (d+1)\sqrt{s}}\right]^4,\left[\frac{R}{2(\ln 2)\eta M (d+1)^2\sqrt{s}C_0}\right]^4 \right\}.
\end{align*}
Defining $C_2:=\left[\frac{R}{2(\ln 2)\eta M (d+1)^2\sqrt{s}C}\right]^4$, we use Cauchy's  estimate to get
\begin{align*}
\|f^{\tilde{N}}_\alpha(z)\|&=\frac{1}{\alpha_0!\cdots \alpha_d!}\left[\frac{d^{\alpha_d}}{d\Delta_{1,d}^{\alpha_d}}\cdots\left[\frac{d^{\alpha_0}
}{dh^{\alpha_0}}\left( \tilde{y}^{\tilde{N}}_h -z\right)\right]\Big\rvert _{h=0}\cdots\right]\bigg\rvert _{\Delta_{1,d}=0}\\
&\le  \frac{C_2^{\frac14} (\ln 2) \eta M (d+1)^2\sqrt{s}C_0}{C_2^{|\alpha|}}\\
&\le  \frac{ (\ln 2) \eta M (d+1)^2\sqrt{s}C}{\left[\frac{R}{2(\ln 2)\eta M (d+1)^2\sqrt{s}C}\right]^{4|\alpha|-1}}.
\end{align*}
\end{proof}

Now we can proceed to the proof of Theorem \ref{tm-local}.

\begin{proof}[Proof of Theorem \ref{tm-local}]
We know that $d_\alpha=f_\alpha^{\tilde{N}}$ with $1\le|\alpha|\le\tilde{N}$, then it remains to estimate the terms for $|\alpha|\ge \tilde{N}+1$. For simplicity, we let $\epsilon=\frac14$, since the proof is similar for $0<\epsilon<\frac12$.

For the numerical solution given by \eqref{TrunRK}, Lemma \ref{lm-dj} yields that the sum of remainder terms is bounded by
\begin{align*}
&\sum_{|\alpha|=\tilde{N}+1}^{\infty}\|d_\alpha(z)\|h^{\alpha_0}|\Delta^{\alpha_1}_{1,1}|\cdots|\Delta^{\alpha_d}_{1,d}|\\
&\le \sum_{|\alpha|=\tilde{N}+1}^{\infty} (d+1)^J\mu (d+1) M \left[\frac{2\kappa M (d+1)\sqrt{s}}{R}\right]^{J-1}h^{\frac{J}{4}}\\
&\le \left\{\sum_{J=0}^{\infty} h^{\frac{J}{4}} \left[\frac{2\kappa M (d+1)\sqrt{s}}{R}\right]^{J-1}    \right\} \mu (d+1)^2 M  \left[\frac{2\kappa M (d+1)\sqrt{s}}{R}\right]^{\tilde{N}+1} h^{\frac{\tilde{N}+1}{4}}  \\
&\le C\tilde{C}^{\tilde{N}}h^{\frac{\tilde{N}+1}{4}}.
\end{align*}
The last inequality holds if $h^{\frac{1}{4}} \left[\frac{2\kappa M (d+1)\sqrt{s}}{R}\right]   \le\gamma<1$, i.e., $h< \left[\frac{\gamma R}{2\kappa M (d+1)\sqrt{s}}\right]^{4}   $.

For the exact solution of the $\tilde{N}$-truncated modified equation \eqref{TME1}, Lemma \ref{lm-fjN} leads to that the sum of remainder terms is bounded by
\begin{align*}
&\sum_{|\alpha|=\tilde{N}+1}^{\infty}\|f^{\tilde{N}}_\alpha(z)\|h^{\alpha_0}|\Delta^{\alpha_1}_{1,1}|\cdots|\Delta^{\alpha_d}_{1,d}|\\
&\le \sum_{J=\tilde{N}+1}^{\infty} (d+1)^J   \frac{ (\ln 2) \eta M (d+1)^2C_0\sqrt{s}}{\left[\frac{R}{2(\ln 2)\eta M (d+1)^2C_0\sqrt{s}}\right]^{4J-1}}h^{\frac{J}{4}} \\
&\le \left\{\sum_{J=0}^{\infty} \frac{h^{\frac{J}{4}}(d+1)^J }{\left[\frac{R}{2(\ln 2)\eta M (d+1)^2C_0\sqrt{s}}\right]^{4J-1}}  \right\} 
 \frac{ (\ln 2) \eta M (d+1)^2C_0\sqrt{s}   (d+1)^{\tilde{N}+1} }{\left[\frac{R}{2(\ln 2)\eta M (d+1)^2C_0\sqrt{s}}\right]^{4(\tilde{N}+1)}}h^{\frac{\tilde{N}+1}{4}}\\
 &\le C\tilde{C}^{\tilde{N}}h^{\frac{\tilde{N}+1}{4}}.
\end{align*}
The last inequality holds if $\frac{(d+1) h^{\frac{1}{4}}}{\left[\frac{R}{2(\ln 2)\eta M (d+1)^2C_0\sqrt{s}}\right]^{4}}\le\gamma<1$, i.e., $h<\left(\frac{\gamma}{d+1}\right)^4 \left[\frac{\gamma R}{2(\ln 2)\eta M (d+1)^2C_0\sqrt{s}}\right]^{16}$.

Since condition \eqref{N} reads $\tilde{N}\le \frac{R}{\eta M(d+1)^2\sqrt{s}h^{\frac14}}:=h_0h^{-\frac14}$, we choose $\tilde{N}$ for the largest integer under this condition and then
\begin{align*}
C\tilde{C}^{\tilde{N}}h^{\frac{\tilde{N}+1}{4}}=C\tilde{C}^3h\tilde{C}^{\tilde{N}-3}h^{\frac{1}{4}(\tilde{N}-3)}\le C\tilde{C}^3h\left(\tilde{C}h^{\frac{1}{4}}\right)^{\tilde{N}-3}.
\end{align*}
Due to $h_0h^{-\frac14}<\tilde{N}+1$, we have 
\begin{align*}
\left(\tilde{C}h^{\frac{1}{4}}\right)^{\tilde{N}-3}\le e^{-(N-3)}\le e^4e^{-(N+1)}\le e^4e^{- h_0/h^{\frac14} },\quad \forall~h\le (\tilde{C}e)^{-4}.
\end{align*}
Therefore, when $h$ is sufficiently small, the local error is 
\begin{align*}
\|\tilde{y}^{\tilde{N}}_{t_1}-Y^h_1\|\le Che^{- h_0/h^{\frac14} }.
\end{align*}
\end{proof}

\section{Numerical experiments}\label{sec5}

Numerical experiments are carried out based on three rough Hamiltonian systems in this section. Based on Examples \ref{ex1}-\ref{ex2}, we verify the convergence orders proved in  Theorem \ref{tm-Nglobal} and Corollary \ref{tm-Nglobaladd} for multiplicative and additive cases, accordingly. 
In Example \ref{ex3}, which is a linear system with the energy conservation law, we present the long time behavior of several numerical methods and the corresponding modified equations. 

\begin{example}\label{ex1}
\begin{equation*}
\left\{
\begin{aligned}
dP_t&=\sin(P_t)\sin(Q_t)dt-\cos(Q_t) dX_t^{2},\quad P_{0}=p,\\
dQ_t&=\cos(P_t)\cos(Q_t)dt-\sin(P_t) dX_t^{1},\quad Q_{0}=q,
\end{aligned}
\right.
\end{equation*}
where $X^{1}$ and $X^{2}$ are independent fBms with Hurst parameter $H\in(1/4,1/2]$. The Hamiltonians are 
\begin{align*}
\mathcal{H}_{0}(P_t,Q_t)=\sin(P_t)\cos(Q_t),\quad \mathcal{H}_{1}(P_t,Q_t)=\cos(P_t),\quad \mathcal{H}_{2}(P_t,Q_t)=\sin(Q_t).
\end{align*}
\end{example}

\begin{example}(flow driven by the Taylor--Green velocity field \cite[Corollary 4.3]{WXZ})\label{ex2} 
\begin{equation*}
\left\{
\begin{aligned}
	dP_t&=-\sin(Q_t)dt+\sqrt{2}\sigma dX_t^{1},\quad P_{0}=p,\\
	dQ_t&=\sin(P_t)dt+\sqrt{2}\sigma dX_t^{2},\quad\quad Q_{0}=q,
\end{aligned}
\right.
\end{equation*}
	where $X^{1}$ and $X^{2}$ are independent fBms with Hurst parameter $H\in(1/4,1/2]$. The Hamiltonians are 
	\begin{align*}
	\mathcal{H}_{0}(P_t,Q_t)=-\cos(P_t)-\cos(Q_t),\quad \mathcal{H}_{1}(P_t,Q_t)=-\sqrt{2}\sigma Q_t,\quad \mathcal{H}_{2}(P_t,Q_t)=\sqrt{2}\sigma P_t.
	\end{align*}
\end{example}

We consider the midpoint scheme 
\begin{align}\label{M1}
Y^h_{n+1}=Y^h_n+V\left(\frac{Y^h_n+Y^h_{n+1}}{2}\right)X_{t_n,t_{n+1}},
\end{align}
whose $2$-truncated and $4$-truncated modified equations are defined via the following formulas for the coefficients
\begin{align*}
|\alpha|&=1:\quad f_\alpha(y)=V_\alpha(y);\\
|\alpha|&=2:\quad f_\alpha(y)=0;\\
|\alpha|&=3:\quad f_\alpha(y)=\sum_{\alpha_1+\alpha_2+\alpha_3=\alpha}\left[-\frac{1}{24}V''_{\alpha_3}(y)V_{\alpha_2}(y)V_{\alpha_1}(y)+\frac{1}{12}V'_{\alpha_3}(y)V'_{\alpha_2}(y)V_{\alpha_1}(y)\right];\\
|\alpha|&=4:\quad f_\alpha(y)=0.
\end{align*}

We apply the midpoint scheme to Example \ref{ex1} with the initial datum $(p,q)=(1,0)$ and the time interval $[0,T]=[0,1]$. Figure \ref{fi-mul} plots the mean-square errors $\|Y^h_N-\tilde{y}^{2}_{T}\|_{L^2(\Omega)}$ and $\|Y^h_N-\tilde{y}^{4}_{T}\|_{L^2(\Omega)}$, where the time step sizes are $h=2^{-i}$, $i=4,5,6,7,8$ and the Hurst parameters are $H=0.4,0.45,0.5$. For each time step size $h$, the `exact' solution of a truncated modified equation is simulated by using the midpoint scheme to this modified equation with a tiny step size $\delta=2^{-12}$. The expectation is approximated by $200$ sample trajectories. The convergence orders of  are revealed to be $3H-1$ and $5H-1$, respectively, from which we confirm the result in Theorem \ref{tm-Nglobal} for the multiplicative case. 
In Example \ref{ex2}, we take $p=1$, $q=0$, $\sigma=2$ and $T=1$, and choose $H=0.3,0.4,0.5$. Then the convergence orders of $\|Y^h_N-\tilde{y}^{2}_{T}\|_{L^2(\Omega)}$ and $\|Y^h_N-\tilde{y}^{4}_{T}\|_{L^2(\Omega)}$ are $2H$ and $4H$, respectively, which verifies the results in Corollary \ref{tm-Nglobaladd} for the additive case.
Furthermore, one can find out that the numerical solution is closer to the exact solution of the $4$-truncated modified equation than that of the $2$-truncated modified equation.

\begin{figure}[H]
	\centering
	\subfigure{
		\begin{minipage}[t]{0.3\linewidth}
			\includegraphics[height=4.3cm,width=4.3cm]{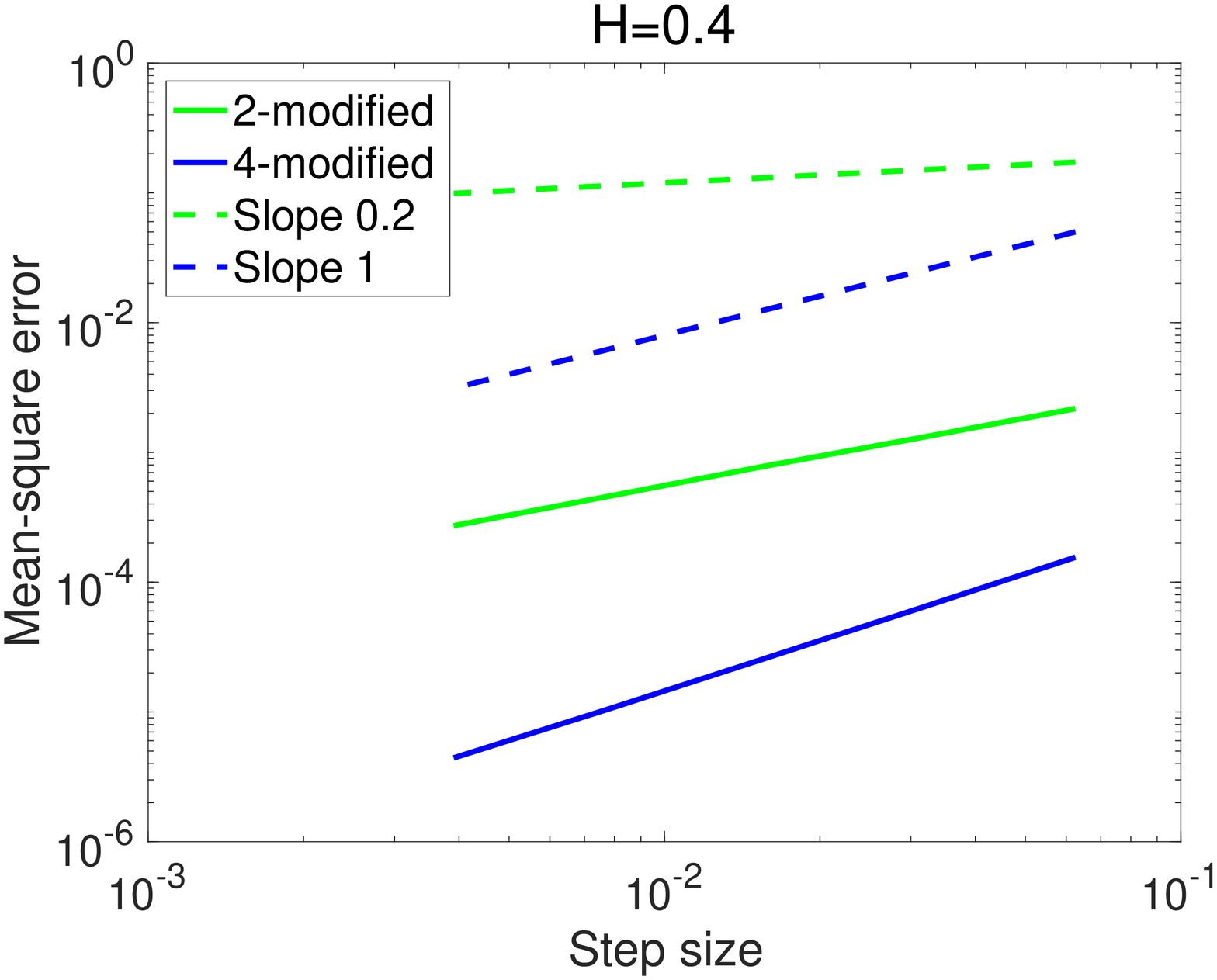}
		\end{minipage}
	}
	\subfigure{
		\begin{minipage}[t]{0.3\linewidth}
			\includegraphics[height=4.3cm,width=4.3cm]{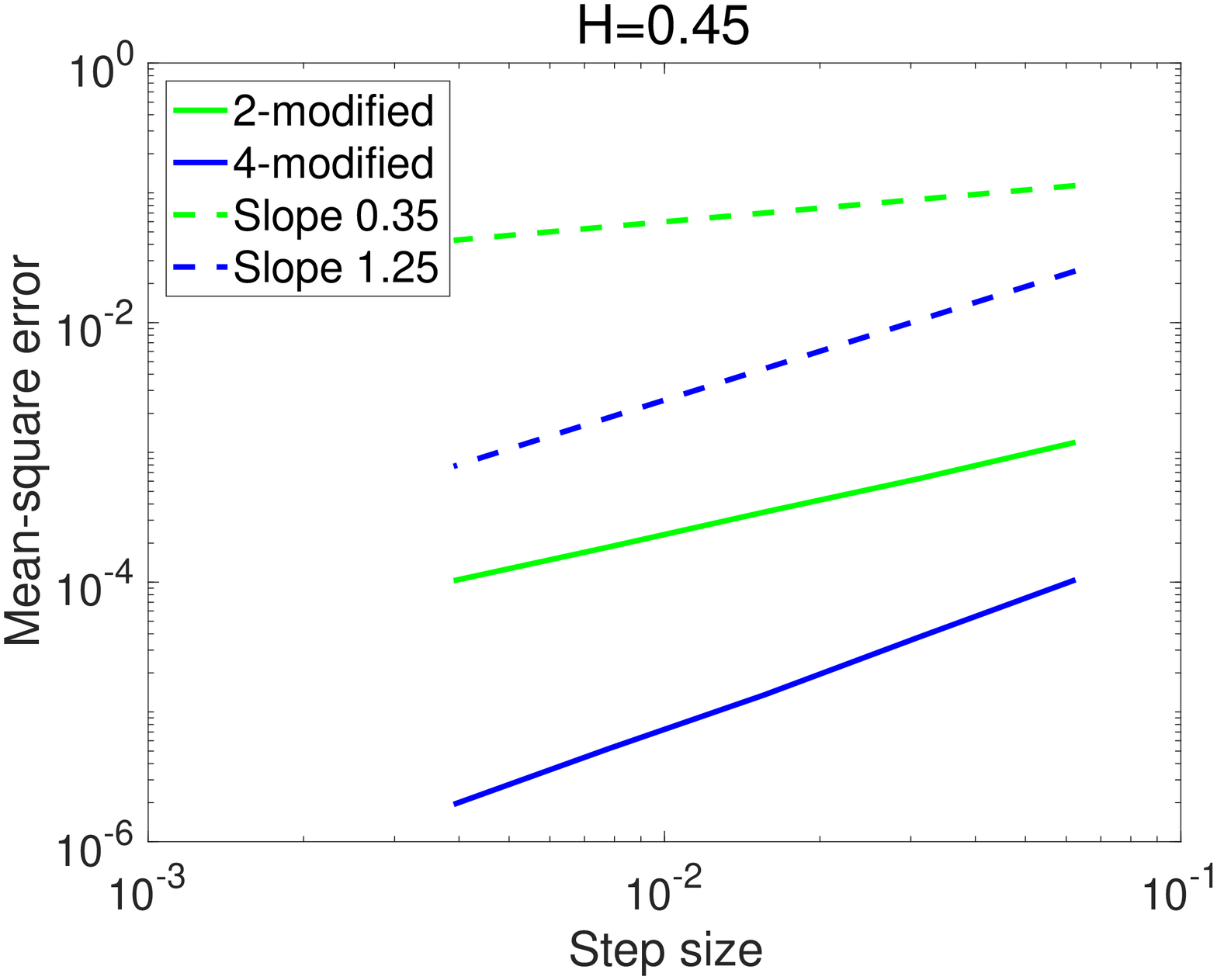}
		\end{minipage}
	}
	\subfigure{
		\begin{minipage}[t]{0.3\linewidth}
			\includegraphics[height=4.3cm,width=4.3cm]{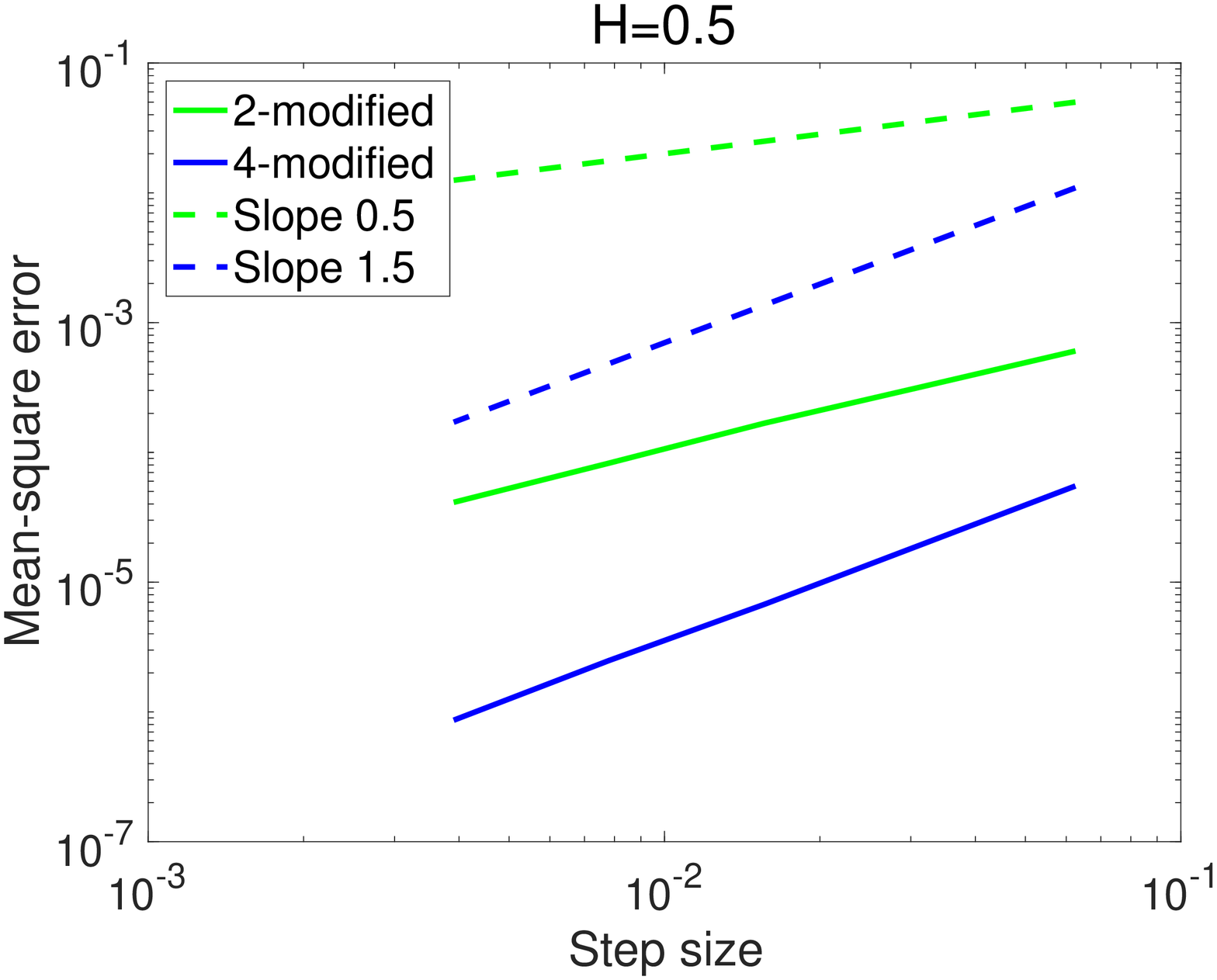}
		\end{minipage}
	}
	\caption{Mean-square error vs. Step size for Example \ref{ex1}}\label{fi-mul}
\end{figure}

\begin{figure}[H]
	\centering
	\subfigure{
		\begin{minipage}[t]{0.3\linewidth}
			\includegraphics[height=4.3cm,width=4.3cm]{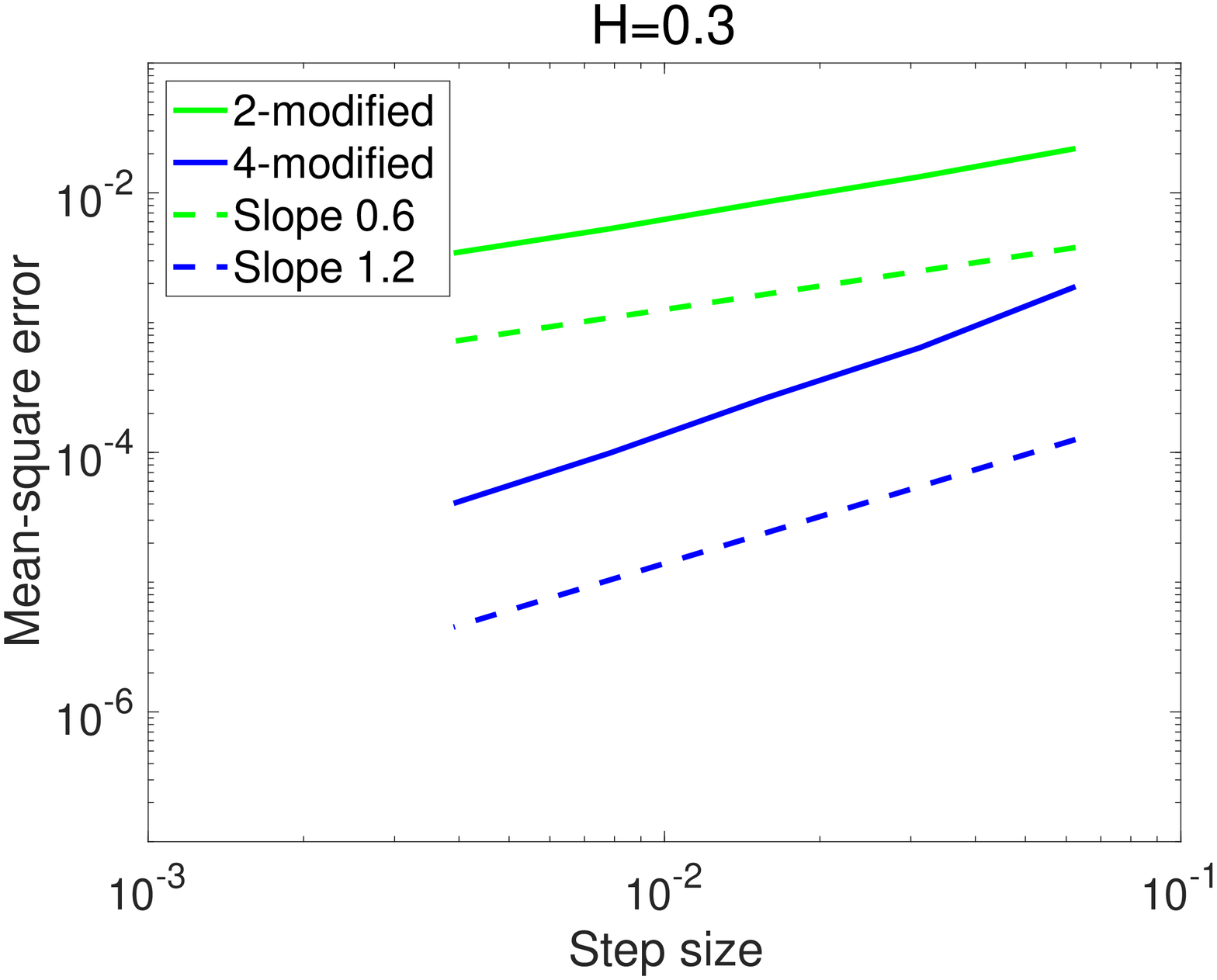}
		\end{minipage}
	}
	\subfigure{
		\begin{minipage}[t]{0.3\linewidth}
			\includegraphics[height=4.3cm,width=4.3cm]{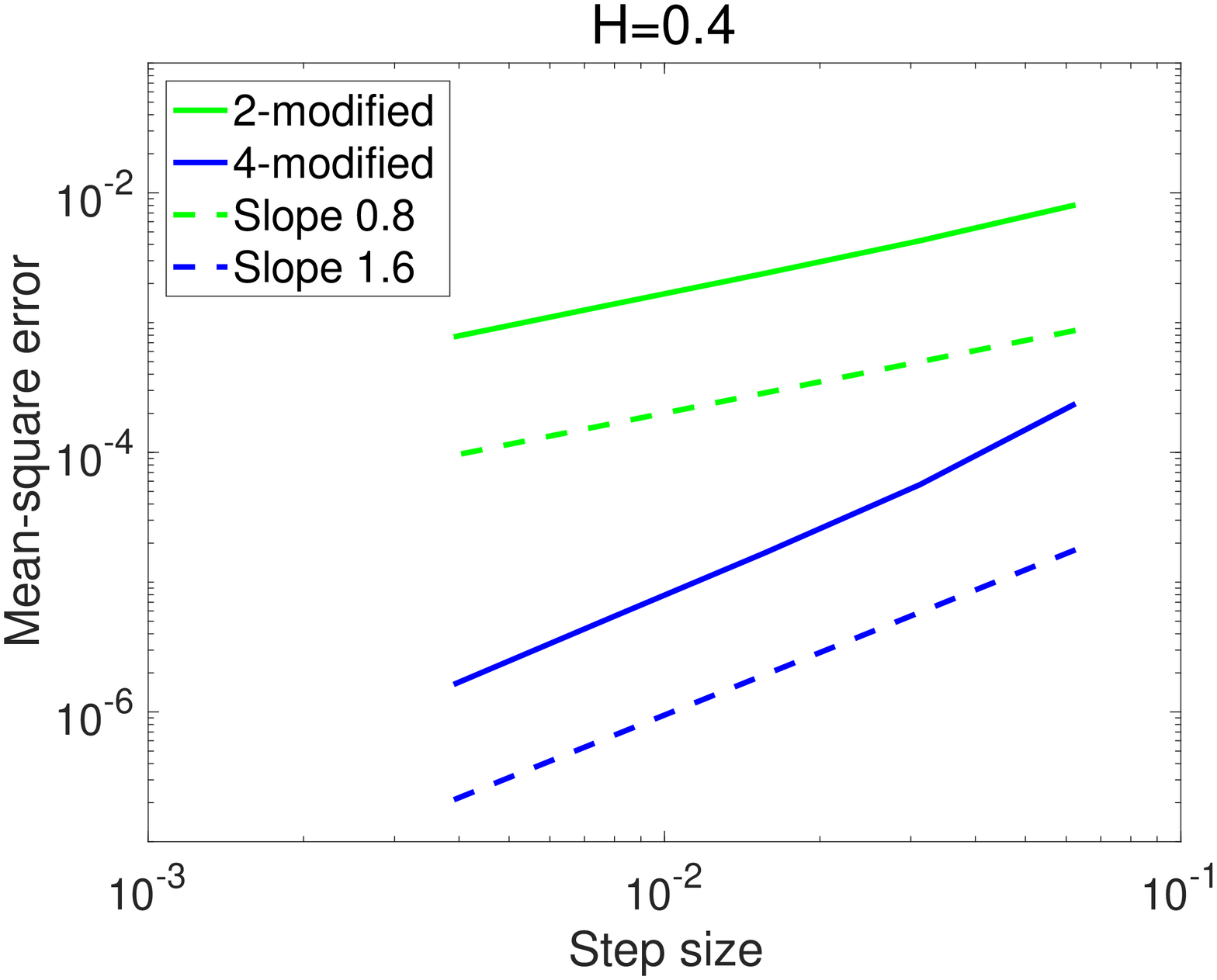}
		\end{minipage}
	}
	\subfigure{
		\begin{minipage}[t]{0.3\linewidth}
			\includegraphics[height=4.3cm,width=4.3cm]{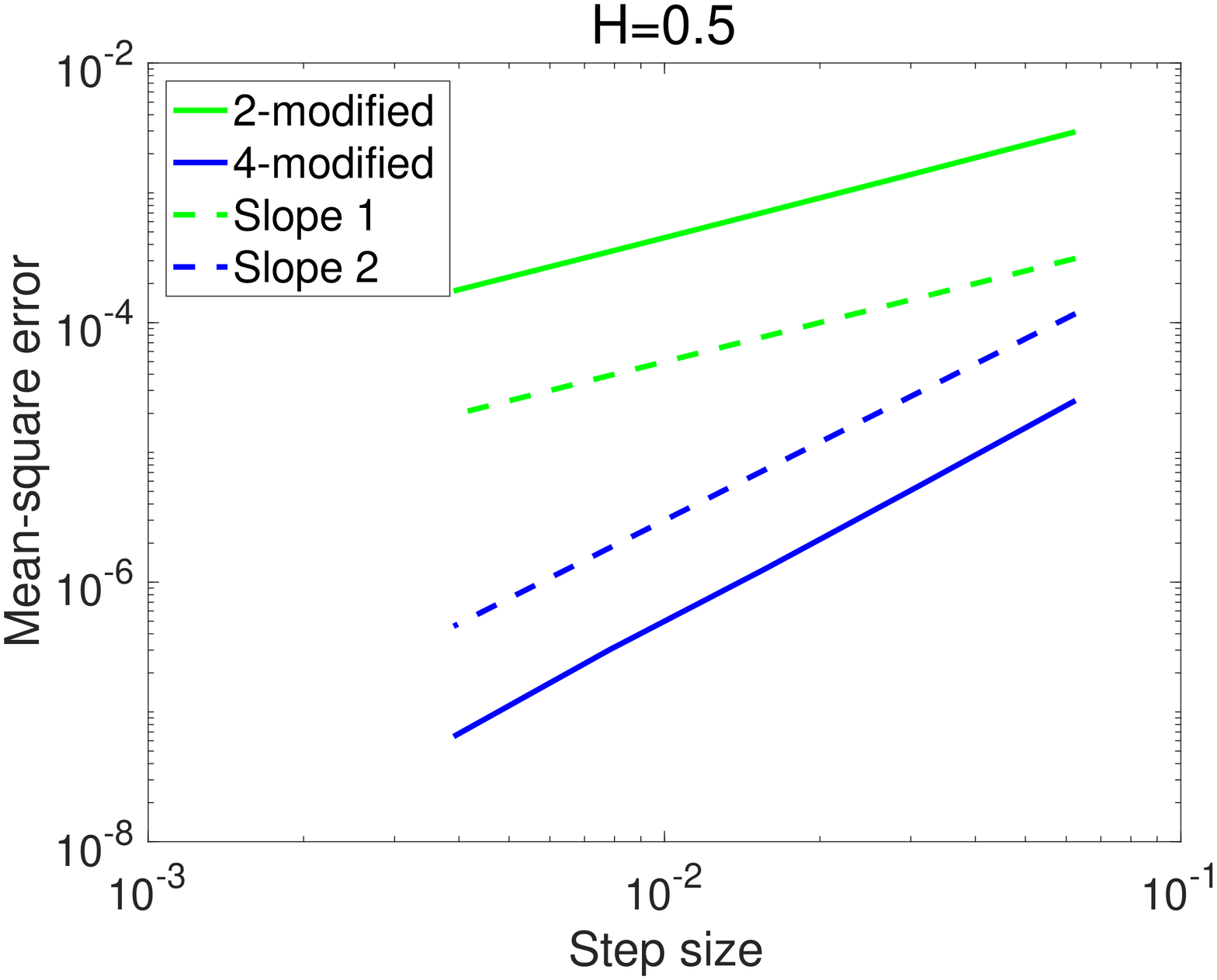}
		\end{minipage}
	}
	\caption{Mean-square error vs. Step size for Example \ref{ex2}}\label{fi-add}
\end{figure}

\begin{example}(Kubo Oscillator \cite{HHW18})\label{ex3}
	\begin{equation*}
	\left\{
	\begin{aligned}
	dP_t=&-aQ_tdt-\sigma\sum^{2}_{i=1}Q_td X_t^{i},\quad P_{0}=p,\\
	dQ_t=&aP_tdt+\sigma\sum^{2}_{i=1}P_td X_t^{i},\quad \quad ~Q_{0}=q,
	\end{aligned}
\right.
\end{equation*}
	where $X^{1}$ and $X^{2}$ are independent standard Brownian motions.
	The Hamiltonians are
	\begin{align*}
	\frac{2}{a}\mathcal{H}_{0}(P_t,Q_t)=\frac{2}{\sigma} \mathcal{H}_{1}(P_t,Q_t)=\frac{2}{\sigma}\mathcal{H}_{2}(P_t,Q_t)=P_t^2+Q_t^2.
	\end{align*}
	Note that $\mathcal{H}(P_t,Q_t)=P_t^2+Q_t^2$ is an invariant.
	The exact solution reads
	\begin{equation*}
\left\{
\begin{aligned}
	P_t=p\cos(a t+\sigma\sum^{2}_{i=1}X_{t}^{i})-q\sin(a t+\sigma\sum^{2}_{i=1}X_{t}^{i}),\\
	Q_t=q\cos(a t+\sigma\sum^{2}_{i=1}X_{t}^{i})+p\sin(a t+\sigma\sum^{2}_{i=1}X_{t}^{i}).
	\end{aligned}
\right.
\end{equation*}
\end{example}

We compare the midpoint scheme \eqref{M1}, which is symplectic and energy-preserving, with the following two numerical methods. One is an explicit RK method defined by 
\begin{align}\label{M2}
Y^h_{n+1}=Y^h_n+V\left(Y^h_n+\frac12 V(Y^h_n)X_{t_n,t_{n+1}}\right)X_{t_n,t_{n+1}},
\end{align}
which is neither symplectic and nor energy-preserving.
The associated $2$-truncated and $4$-truncated modified equations are defined through the formulas for the coefficients:
\begin{align*}
|\alpha|&=1:\quad f_\alpha(y)=V_\alpha(y);\\
|\alpha|&=2:\quad f_\alpha(y)=0;\\
|\alpha|&=3:\quad  f_\alpha(y)=\sum_{\alpha_1+\alpha_2+\alpha_3=\alpha}\left[-\frac{1}{24}V''_{\alpha_3}(y)V_{\alpha_2}(y)V_{\alpha_1}(y)-\frac{1}{6}V'_{\alpha_3}(y)V'_{\alpha_2}(y)V_{\alpha_1}(y)\right];\\
|\alpha|&=4:\quad f_\alpha(y)=\sum_{\alpha_1+\alpha_2+\alpha_3+\alpha_4=\alpha}\left[\frac{1}{12}V'_{\alpha_4}V''_{\alpha_3}(y)V_{\alpha_2}(y)V_{\alpha_1}(y)+\frac{1}{8}V'_{\alpha_4}(y)V'_{\alpha_3}(y)V'_{\alpha_2}(y)V_{\alpha_1}(y)\right].
\end{align*}
Another one is a symplectic partitioned RK method which is not energy-preserving. Applying it to Example \ref{ex3} leads to (see also \cite[Section 5.1]{Milstein})
\begin{equation}\label{M3}
\left\{
\begin{aligned}
P^h_{n+1}&=P^h_n-aQ^h_nh-\sigma^2P^h_{n+1}h-\sigma\sum_{i=1}^{2} Q^h_nX^i_{t_n,t_{n+1}},\\
Q^h_{n+1}&=Q^h_n+aP^h_{n+1}h+\sigma^2Q^h_{n}h+\sigma\sum_{i=1}^{2} P^h_{n+1}X^i_{t_n,t_{n+1}}.
\end{aligned}
\right.
\end{equation}
The coefficients of the associated modified equations for $1\le|\alpha|\le3 $ are calculated as follows. Denote $y=(y^1,y^2)^\top\in\mathbb{R}^2$, then
$$|\alpha|=1:\quad f_{(1,0,0)}(y)=\left( \begin{array}{cc}
-\sigma^2 & -a \\
a & \sigma^2\end{array} \right)\left( \begin{array}{c}
y^1 \\
y^2\end{array}\right), \qquad
f_{(0,1,0)}(y)=f_{(0,0,1)}(y)=\left( \begin{array}{cc}
0 & -\sigma \\
\sigma & 0\end{array} \right)\left( \begin{array}{c}
y^1 \\
y^2\end{array} \right);$$
$$|\alpha|=2:\quad f_{(2,0,0)}(y)=\left( \begin{array}{cc}
\frac{\sigma^4}{2}+\frac{a^2}{2} & a\sigma^2 \\
-a\sigma^2 & -\frac{\sigma^4}{2}-\frac{a^2}{2}\end{array} \right)\left( \begin{array}{c}
y^1 \\
y^2\end{array}\right), \qquad
f_{(0,1,1)}(y)=\left( \begin{array}{cc}
\sigma^2 &0 \\
0 & -\sigma^2\end{array} \right)\left( \begin{array}{c}
y^1 \\
y^2\end{array} \right),$$
$$f_{(1,1,0)}(y)=f_{(1,0,1)}(y)=\left( \begin{array}{cc}
a\sigma & \sigma^3 \\
-\sigma^3 & -a\sigma\end{array} \right)\left( \begin{array}{c}
y^1 \\
y^2\end{array} \right),~~
f_{(0,2,0)}(y)=f_{(0,0,2)}(y)=\left( \begin{array}{cc}
\frac{\sigma^2}{2} & 0 \\
0 & -\frac{\sigma^2}{2}\end{array} \right)\left( \begin{array}{c}
y^1 \\
y^2\end{array} \right);$$
$$|\alpha|=3:\quad f_{(3,0,0)}(y)=\left( \begin{array}{cc}
-\frac{\sigma^6}{3}-\frac{2a^2\sigma^2}{3} & -\frac{5a\sigma^4}{6}-\frac{a^3}{6} \\
\frac{5a\sigma^4}{6}+\frac{a^3}{6}& \frac{\sigma^6}{3}+\frac{2a^2\sigma^2}{3} \end{array} \right)\left( \begin{array}{c}
y^1 \\
y^2\end{array}\right), ~
f_{(1,1,1)}(y)=\left( \begin{array}{cc}
-\frac{4\sigma^4}{3} & -a\sigma^2 \\
a\sigma^2 & \frac{4\sigma^4}{3} \end{array} \right)
\left( \begin{array}{c}
y^1 \\
y^2\end{array} \right),$$
$$f_{(1,2,0)}(y)=f_{(1,0,2)}(y)=\left( \begin{array}{cc}
-\sigma^4 & -\frac{2a\sigma^2}{3} \\
\frac{2a\sigma^2}{3} & \sigma^4\end{array} \right)\left( \begin{array}{c}
y^1 \\
y^2\end{array}\right),\qquad
f_{(0,2,1)}(y)=f_{(0,1,2)}(y)=\left( \begin{array}{cc}
0 & -\frac{\sigma^3}{2} \\
\frac{\sigma^3}{2}  & 0\end{array} \right)\left( \begin{array}{c}
y^1 \\
y^2\end{array} \right),$$
$$f_{(2,1,0)}(y)=f_{(2,0,1)}(y)=\left( \begin{array}{cc}
-\frac{4a\sigma^3}{3} & -\frac{5\sigma^5}{6}-\frac{a^2\sigma}{2}  \\
\frac{5\sigma^5}{6}+\frac{a^2\sigma}{2} & \frac{4a\sigma^3}{3} \end{array} \right)\left( \begin{array}{c}
y^1 \\
y^2\end{array} \right),~
f_{(0,3,0)}(y)=f_{(0,0,3)}(y)=\left( \begin{array}{cc}
0 & -\frac{\sigma^3}{6} \\
\frac{\sigma^3}{6}  & 0\end{array} \right)\left( \begin{array}{c}
y^1 \\
y^2\end{array} \right).$$

We set $a=1$, $\sigma=0.9$, $T=20$, $N=10\times2^6$ (i.e., $h=\frac{T}{N}=0.0313$). We present the evolution of domains under the flow of $Y^h_n(z)$, $Y_{t_n}(z)$ and $\tilde{y}^{\tilde{N}}_{t_n}(z)$ with $n=0,75,100,180$, for one realization of Example \ref{ex3} in Figure \ref{Kubosym}. For methods \eqref{M1}-\eqref{M2}, the truncation numbers are $\tilde{N}=2,4$. For method \eqref{M3}, $\tilde{N}=2,3$.
The `exact' solution of a truncated modified equation is simulated by applying the midpoint scheme to this modified equation with a tiny step size $\delta=\frac{T}{10\times2^{15}}=2^{-14}$.
Notice the fact that the preservation of the symplectic structure is equivalent to the preservation of the area of domains in $2$-dimensional case. The areas of domains remain unchanged under symplectic methods \eqref{M1} and \eqref{M3}, as well as those given by the flows of associated truncated modified equations. However, the corresponding areas for method \eqref{M2} and its $4$-truncated modified equation increase. In particular, we point out that the $2$-truncated modified equation of methods \eqref{M2} possesses the symplectic conservation law, since it coincides with the Wong--Zakai approximation of the original system and shares the same formula as the $2$-truncated modified equation of method \eqref{M1}. These numerical results support Theorem \ref{tmfj}-\ref{tmsym}.

In Figures \ref{err_M1}-\ref{err_M3}, we perform simulations for a trajectory with $a=1$, $\sigma=1$, $p=1$, $q=0$, $T=50$, $N=10\times2^8$ (i.e., $h=\frac{T}{N}=0.0195$) by the three methods, successively. The errors $\|Y^h_n-Y_{t_n}\|$ and $\|Y^h_n-\tilde{y}^{\tilde{N}}_{t_n}\|$ are given in Figures \ref{a1}-\ref{a3}. The `exact' solution of a truncated modified equation is simulated by applying the midpoint scheme to this modified equation with a tiny step size $\delta=\frac{T}{10\times2^{15}}$. As expected, we see that the error decreases as $\tilde{N}$ becomes larger for a numerical method. Besides, the energy errors  $|(Y^h_n)^\top Y^h_n -p^2-q^2|$ and $|(\tilde{y}^{\tilde{N}}_{t_n})^\top \tilde{y}^{\tilde{N}}_{t_n} -p^2-q^2|$ are presented in Figures \ref{b1}-\ref{b3}. Noting that the energy-preserving method \eqref{M1} is also a symmetry method, we have that $f_\alpha(y)=0$ for any $|\alpha|=2k$, $k\in\mathbb{N}_+$. Therefore, what we observe is that the energy error is almost zero for method \eqref{M1} and its truncated modified equations. As to the other two methods, the energy is not preserved, but the energy error is generally controlled better by the symplectic method \eqref{M3} than by non-symplectic method \eqref{M2}.

\begin{figure}
	\centering
	\subfigure[The midpoint scheme \eqref{M1}]{
		\begin{minipage}[t]{0.47\linewidth}
			\includegraphics[height=8cm,width=8cm]{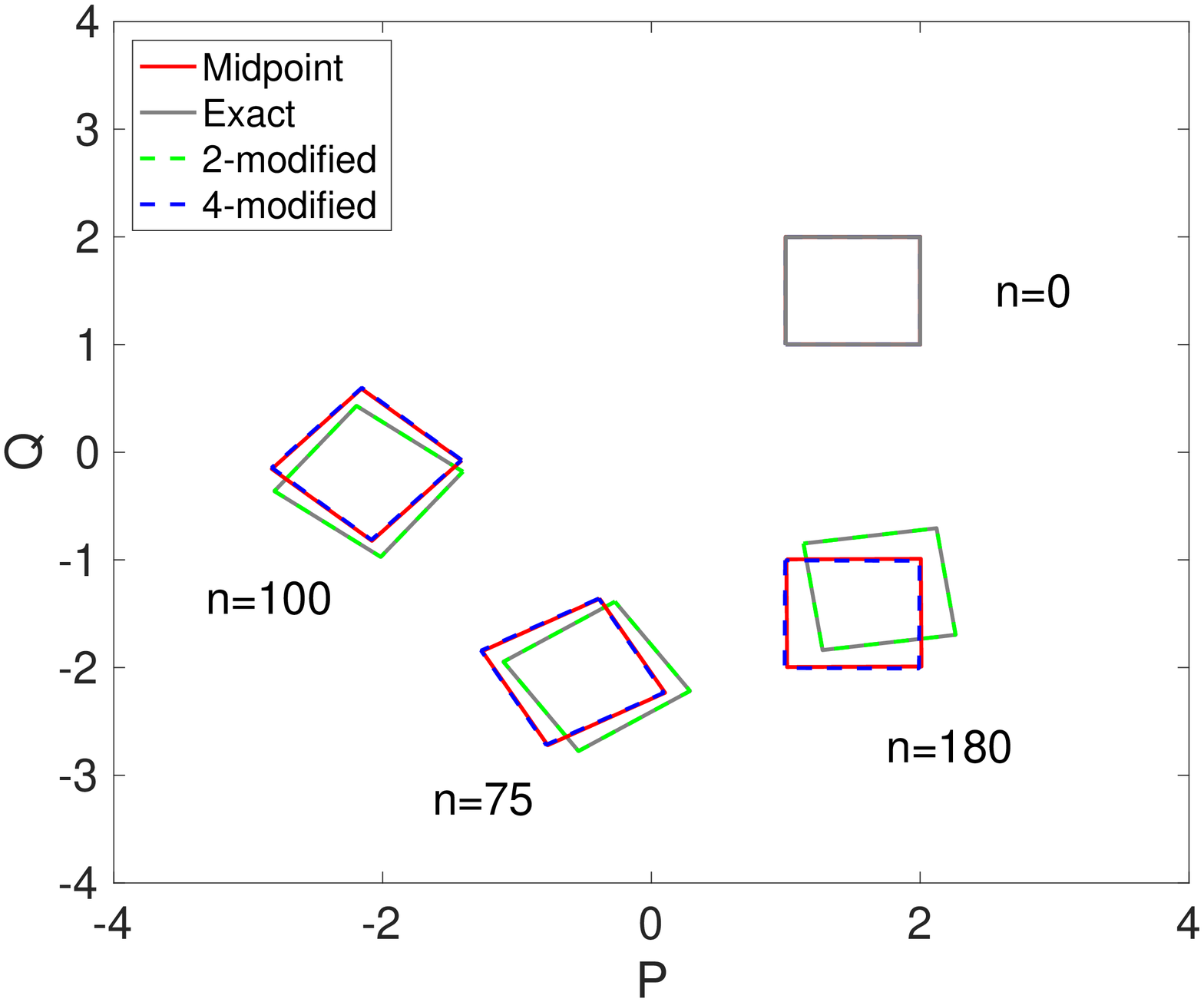}
		\end{minipage}
	}
	\subfigure[The explicit RK method \eqref{M2}]{
		\begin{minipage}[t]{0.47\linewidth}
			\includegraphics[height=8cm,width=8cm]{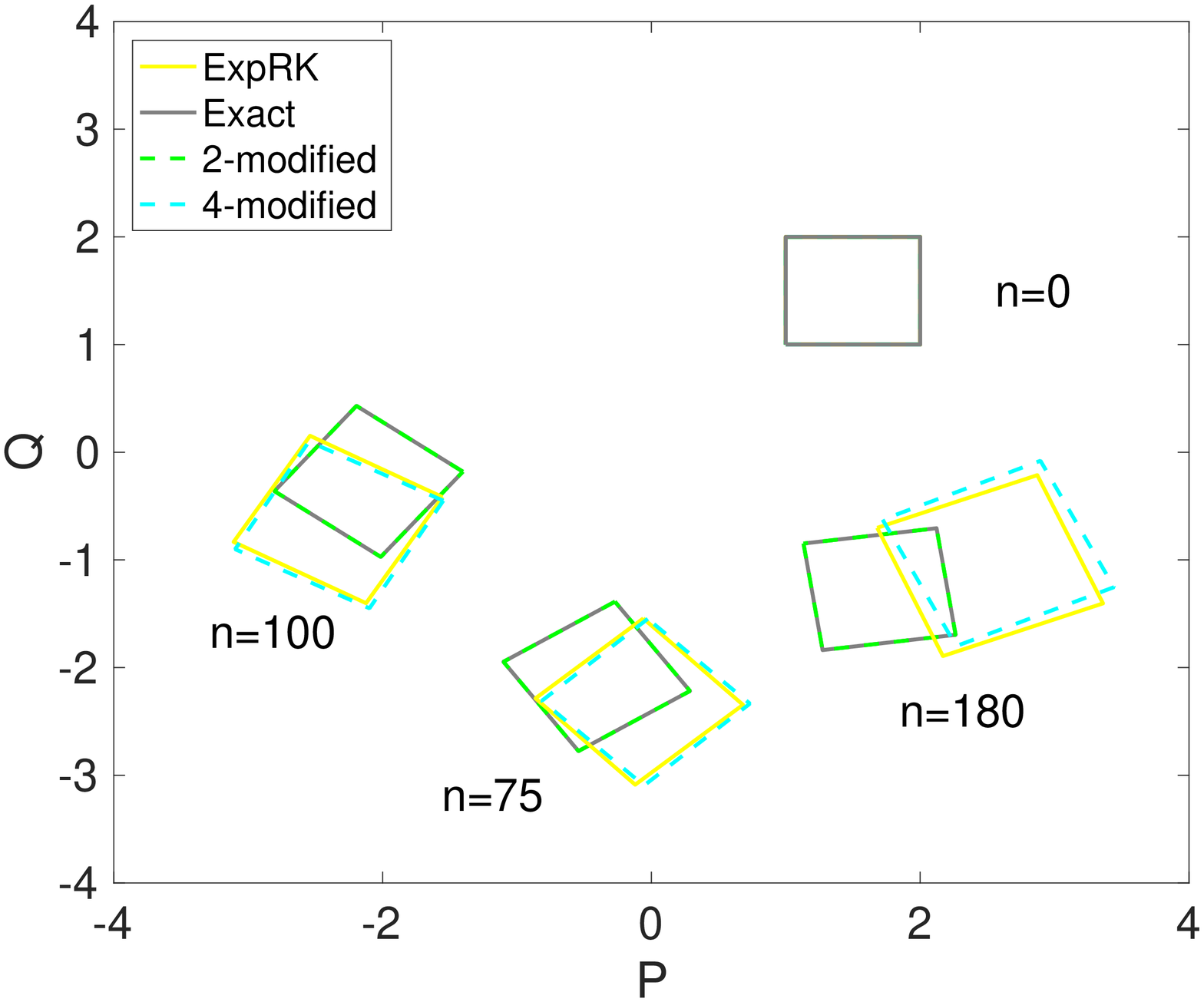}
		\end{minipage}
	}
	\subfigure[The symplectic partitioned RK method \eqref{M3}]{
	\begin{minipage}[t]{0.47\linewidth}
		\includegraphics[height=8cm,width=8cm]{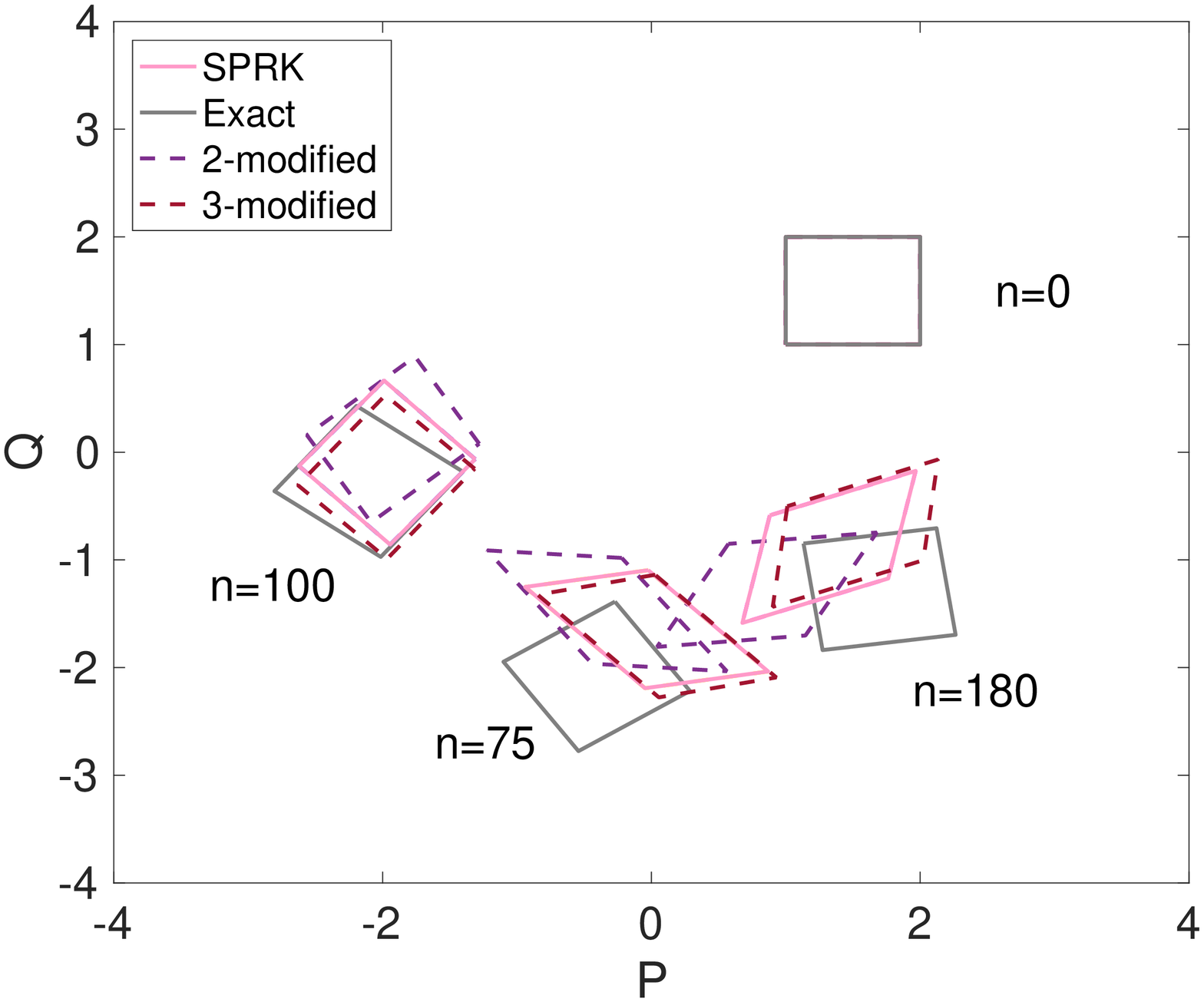}
	\end{minipage}
}
	\caption{Evolution of domains in the phase plane}\label{Kubosym}
\end{figure}

\begin{figure}
	\centering
	\subfigure[Error for one trajectory]{
		\begin{minipage}[t]{0.47\linewidth}
			\includegraphics[height=6cm,width=8cm]{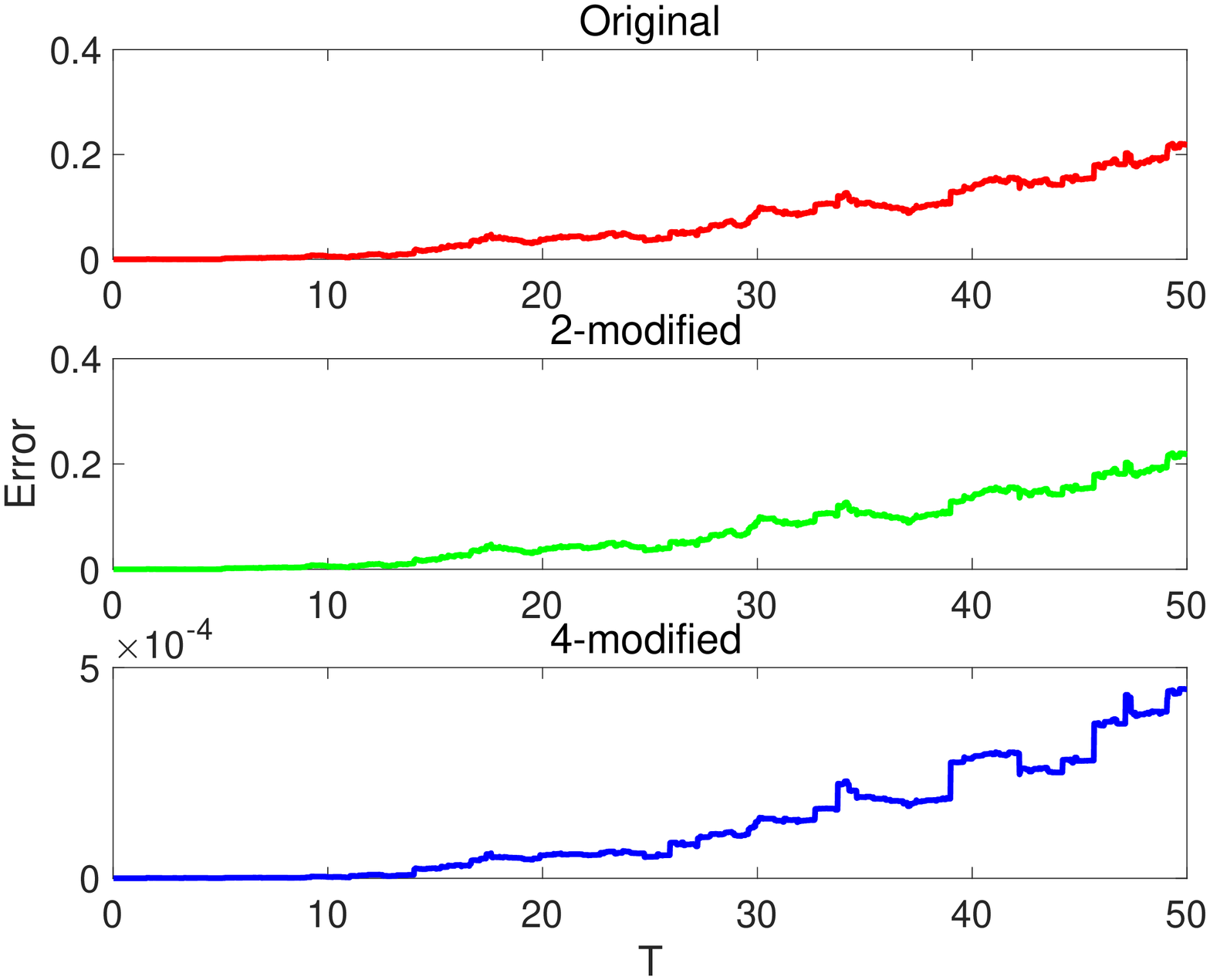}\label{a1}
		\end{minipage}
	}
	\subfigure[Energy error for one trajectory]{
		\begin{minipage}[t]{0.47\linewidth}
			\includegraphics[height=6cm,width=8cm]{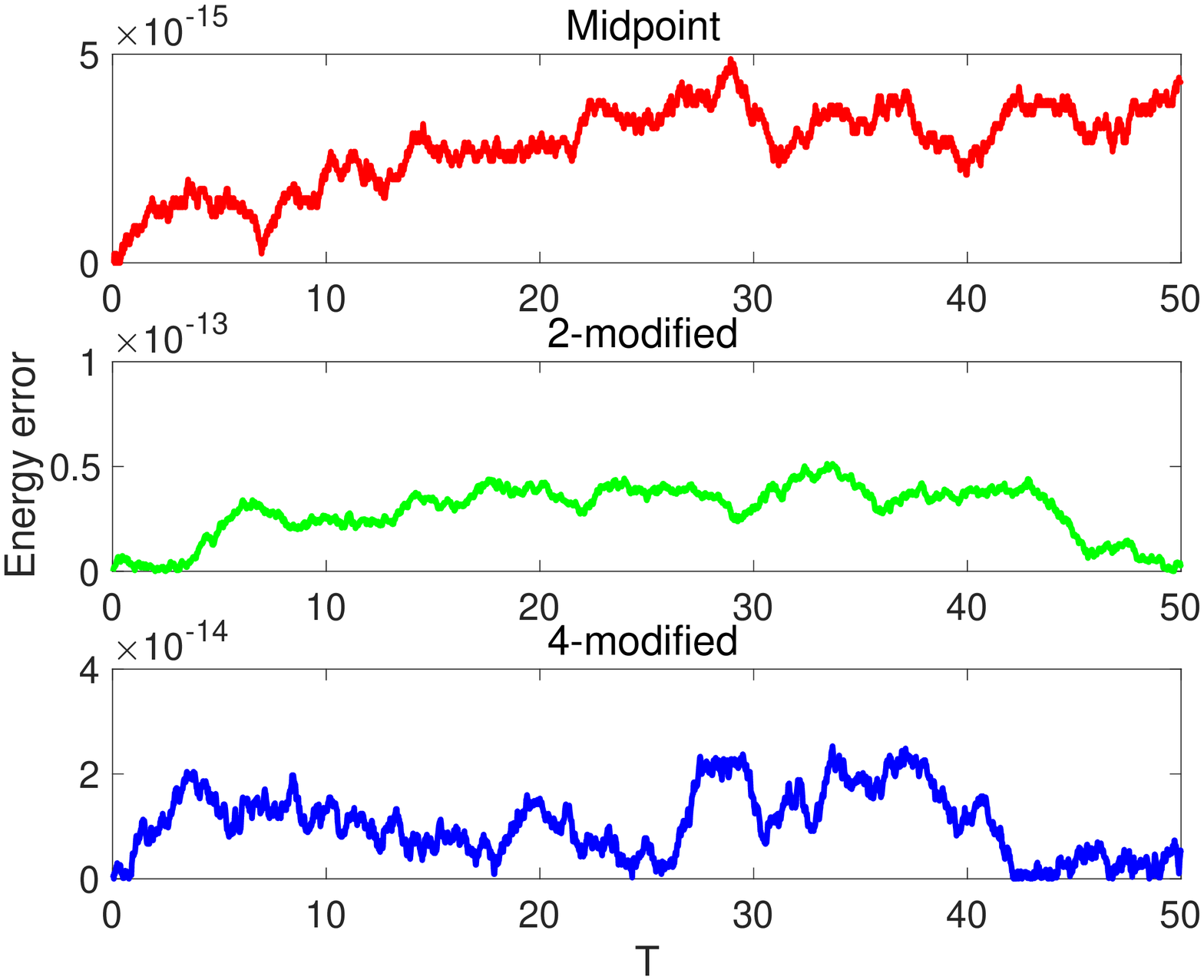}\label{b1}
		\end{minipage}
	}
	\caption{The midpoint scheme \eqref{M1}}\label{err_M1}
\end{figure}

\begin{figure}
	\centering
	\subfigure[Error for one trajectory]{
		\begin{minipage}[t]{0.47\linewidth}
			\includegraphics[height=6cm,width=8cm]{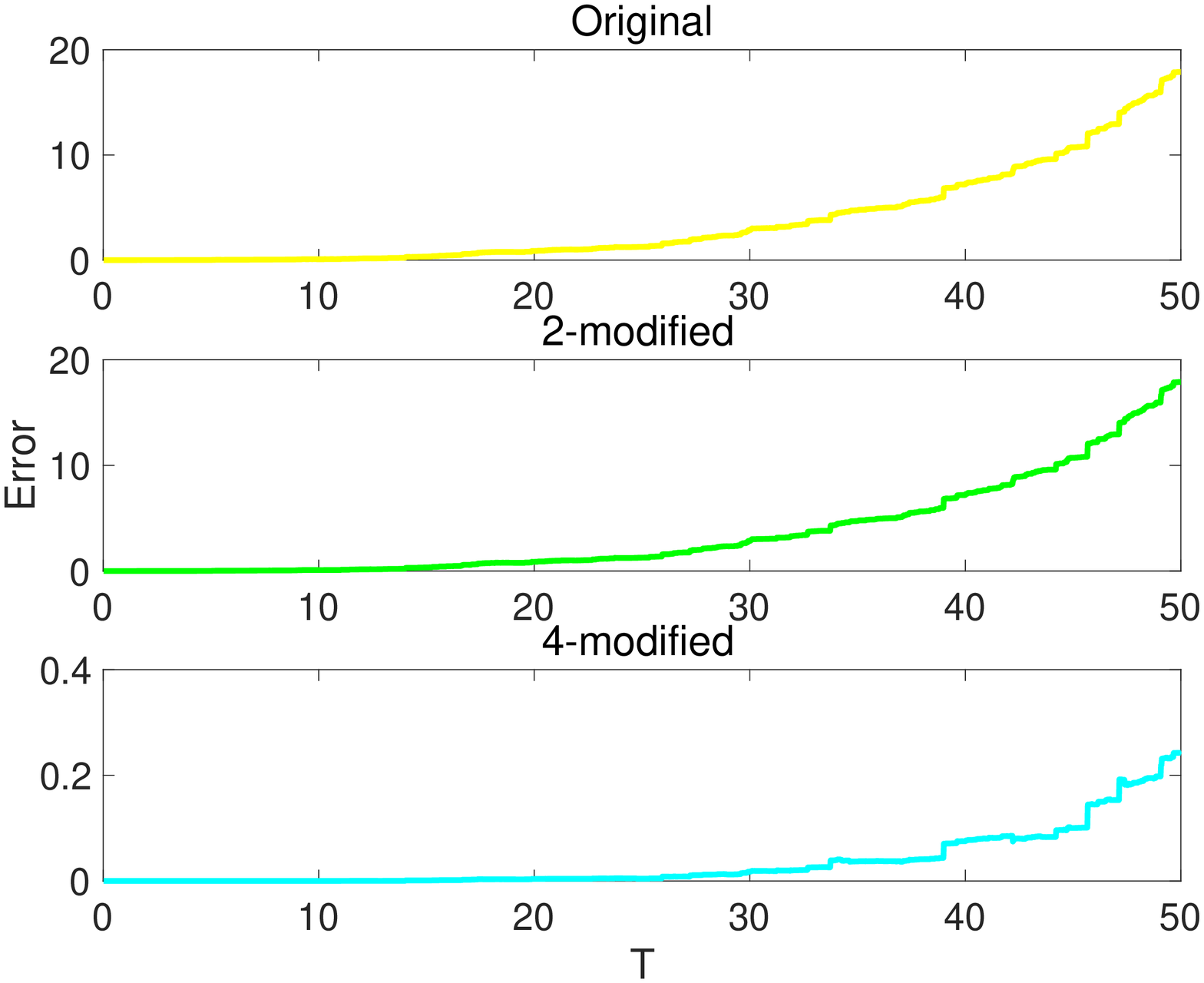}\label{a2}
		\end{minipage}
	}
	\subfigure[Energy error for one trajectory]{
		\begin{minipage}[t]{0.47\linewidth}
			\includegraphics[height=6cm,width=8cm]{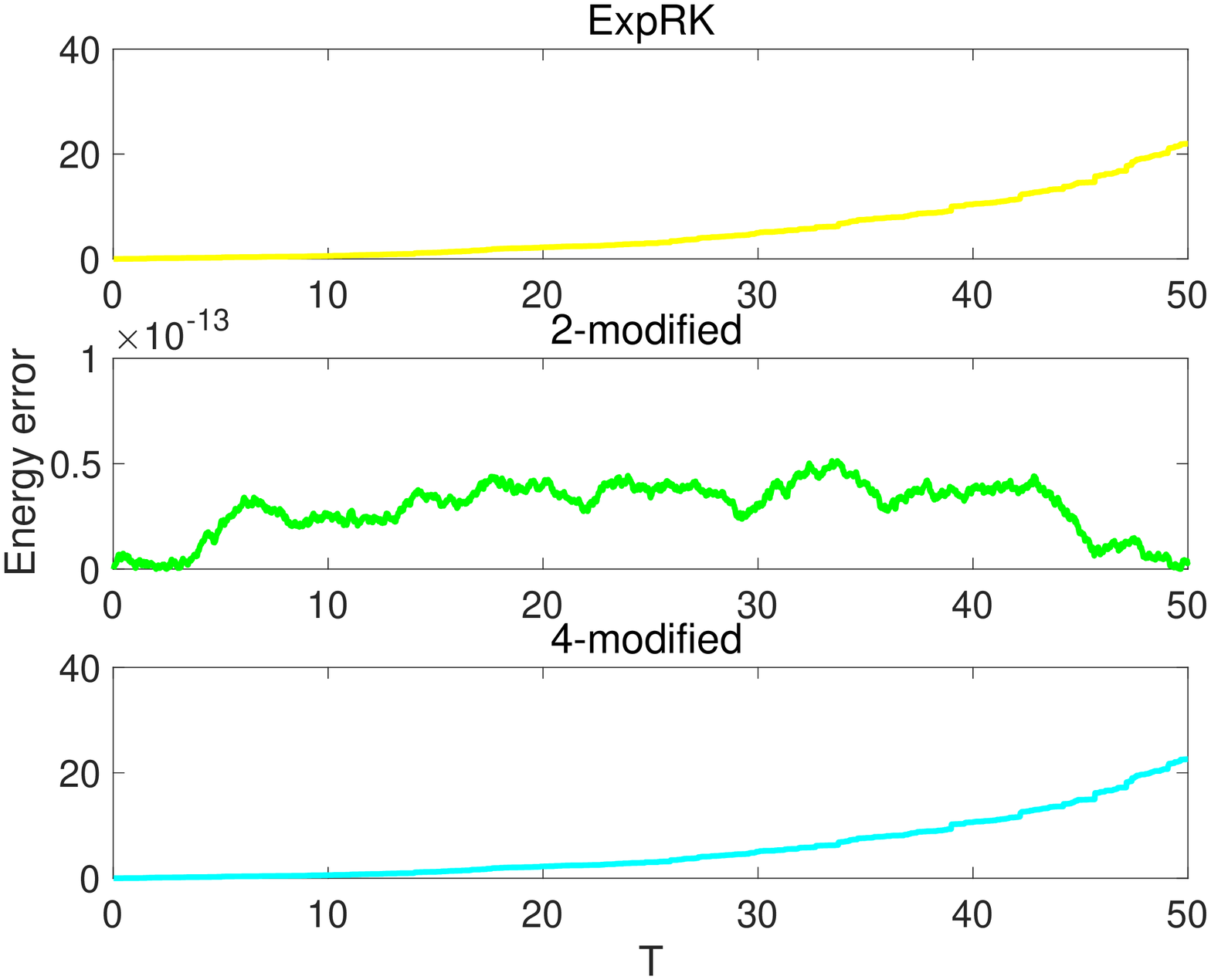}\label{b2}
		\end{minipage}
	}
	\caption{The explicit RK method \eqref{M2}}\label{err_M2}
\end{figure}

\begin{figure}
	\centering
	\subfigure[Error for one trajectory]{
		\begin{minipage}[t]{0.47\linewidth}
			\includegraphics[height=6cm,width=8cm]{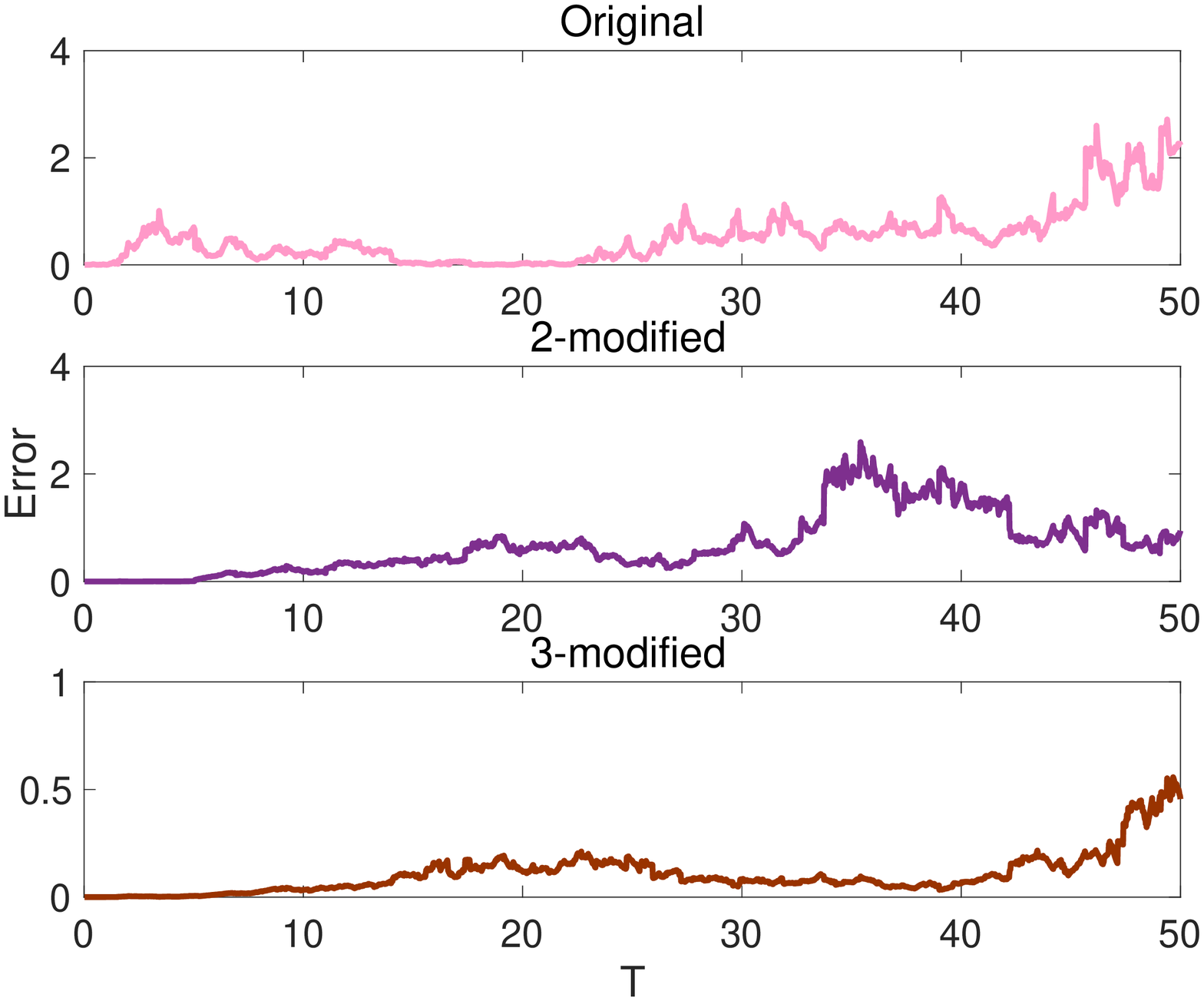}\label{a3}
		\end{minipage}
	}
	\subfigure[Energy error for one trajectory]{
		\begin{minipage}[t]{0.47\linewidth}
			\includegraphics[height=6cm,width=8cm]{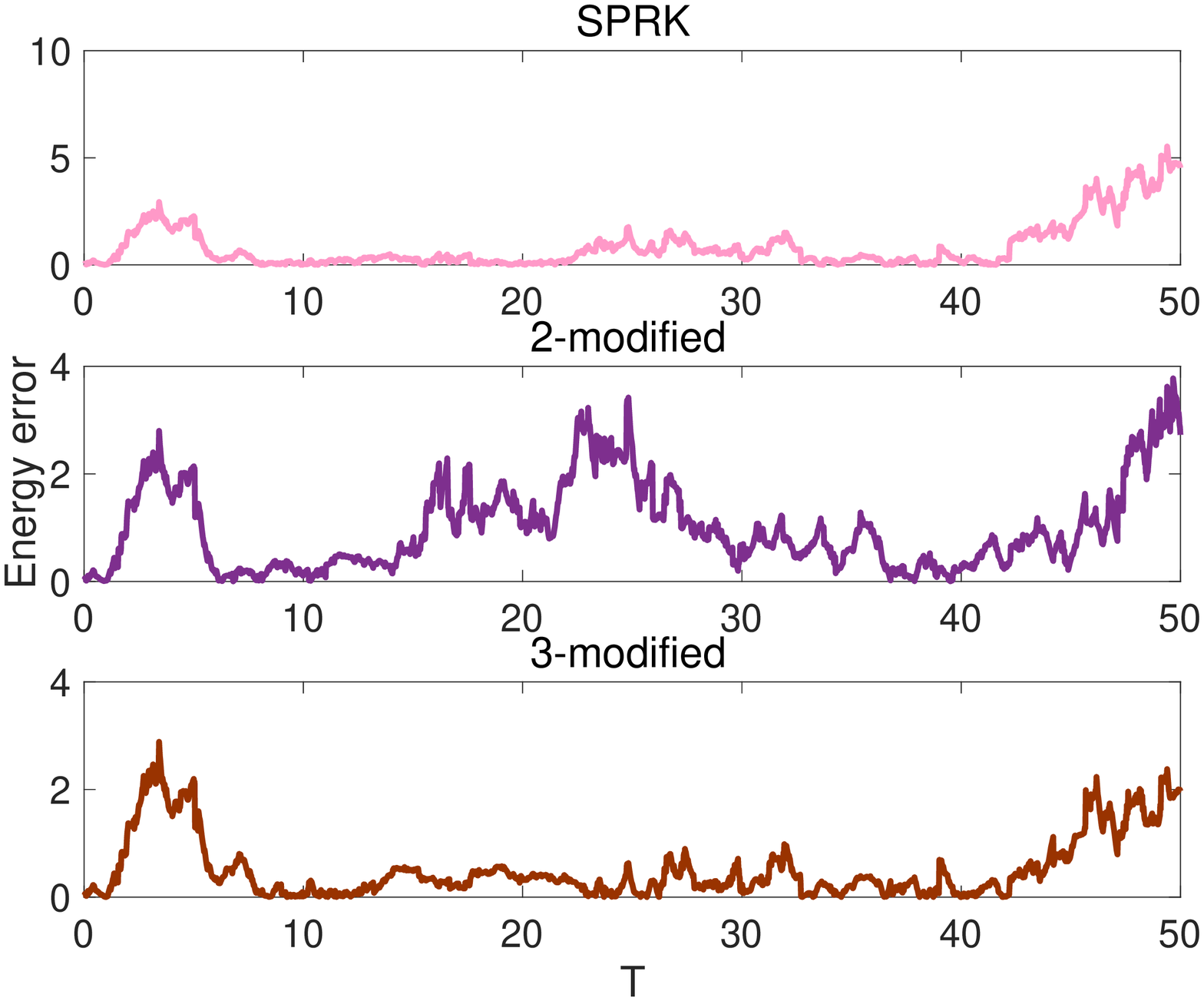}\label{b3}
		\end{minipage}
	}
	\caption{The symplectic partitioned RK method \eqref{M3}}\label{err_M3}
\end{figure}

\section*{Acknowledgements}
This work is supported by National Natural Science Foundation of China (NO. 91530118, NO. 91130003, NO. 11021101, NO. 91630312 and NO. 11290142).

\section*{References}

\bibliographystyle{plain}
\bibliography{mybibfile}

\end{document}